\DeclareMathOperator{\Graph}{Graph}
\newcommand{\innerP}[2]{\left\langle {#1},{#2} \right\rangle}
\newcommand{\dist}{\mathrm{dist}}
\newcommand{\sign}{\mathrm{sign}}
\newcommand{\dom}{\mathrm{dom}}
\newcommand{\prox}{\mathrm{prox}}
\newcommand{\Liminf}{\mathop{\lim\inf}}
\newcommand{\One}{\mathbf{1}}
\newcommand{\StatementNum}[1]{{\upshape(\romannumeral #1 )}}
\newcommand{\StatementNumUpperCase}[1]{{(\uppercase\expandafter{\romannumeral #1})}}
\newcommand{\wF}{\widetilde{F}}
\newcommand{\AlphaStar}{\min\{\underline{\alpha},\underline{\alpha}_{\sigma} \}\gamma}
\newtheorem{assumption}{Assumption}
\newtheorem{lemma}[theorem]{Lemma}
\newtheorem{proposition}[theorem]{Proposition}
\newtheorem{corollary}[theorem]{Corollary}
\begin{document}
	
\title{A min-max reformulation and proximal algorithms for a class of structured nonsmooth fractional optimization problems\thanks{Junpeng Zhou and Na Zhang contributed equally to this paper.
}}

\titlerunning{Min-Max Reformulation and AMPDA for FP}        

\author{Junpeng Zhou         \and Na Zhang \and Qia Li
}


\institute{Junpeng Zhou \at
              School of Computer Science and Engineering, Sun Yat-sen University, Guangzhou 510275, China \\
              \email{zhoujp5@mail2.sysu.edu.cn}           
           \and
           Na Zhang \at
              Department of Applied Mathematics, College of Mathematics and Informatics, South China Agricultural University, Guangzhou 510642, China\\
              \email{nzhsysu@gmail.com}
           \and
           Qia Li \at
           School of Computer Science and Engineering, Guangdong Province Key Laboratory of Computational Science, Sun Yat-sen University, Guangzhou 510275, China\\
           Corresponding author. \\
           \email{liqia@mail.sysu.edu.cn}
}

\date{Received: date / Accepted: date}

\maketitle

\begin{abstract}
In this paper, we consider a class of structured nonsmooth fractional minimization, where the first part of the objective is the ratio of a nonnegative nonsmooth nonconvex function to a nonnegative nonsmooth convex function, while the second part is the difference of a smooth nonconvex function and a nonsmooth convex function. This model problem has many important applications, for example, the scale-invariant sparse signal recovery in signal processing. However, the existing methods for fractional programs are not suitable for solving this problem due to its special structure. We first present a novel nonfractional min-max reformulation for the original fractional program and show the connections between their global (local) optimal solutions and stationary points. Based on the reformulation, we propose an alternating maximization proximal descent algorithm and show its subsequential convergence towards a critical point of the original fractional program under a mild assumption. Moreover, we prove that the proposed algorithm can find an $\epsilon$-critical point of the considered problem within $\mathcal{O}(\epsilon^{-2})$ iterations. By further assuming the Kurdyka-{\L}ojasiewicz (KL) property of an auxiliary function, we also establish the convergence of the entire solution sequence generated by the proposed algorithm. Finally, some numerical experiments on the $L_1/L_2$ least squares problem and scale-invariant sparse signal recovery are conducted to demonstrate the efficiency of the proposed method.
\keywords{nonsmooth fractional program \and min-max optimization \and proximal algorithm \and scale-invariant sparse recovery \and Kurdyka-{\L}ojasiewicz property}
\subclass{90C32 \and 90C26 \and 90C30 \and 65K05}
\end{abstract}


\section{Introduction}
\label{section:introduction}
In this paper, we consider the following structured nonsmooth fractional minimization problem
\begin{equation}\label{problem:root}
	\min \left\{\frac{f(x)}{g(x)} + h(x):x\in\Omega\cap\mathcal{C} \right\},
\end{equation}
where $f, g: \mathbb{R}^n \rightarrow[0,+\infty)$ and $h: \mathbb{R}^n \rightarrow \mathbb{R}$ are proper closed, $\mathcal{C} \subseteq \mathbb{R}^n$ is closed and $\mathcal{C} \cap \Omega \neq \emptyset$ with $\Omega:=\left\{x \in \mathbb{R}^n: g(x) \neq 0\right\}$. Let $f_{\mathcal{C}}: \mathbb{R}^n \to [0,+\infty]$ be the sum of $f$ and the indicator function on $\mathcal{C}$.
Throughout this paper, we adopt the following blanket assumption for problem \eqref{problem:root}.

\begin{assumption}
	\indent
	\begin{enumerate}[\upshape(\romannumeral 1 )]
		\item $f$ is locally Lipschitz continuous on $\Omega$.
		\item $g$ is convex on $\mathbb{R}^n$.
		\item $h:=h_1-h_2$, where $h_1: \mathbb{R}^n \rightarrow \mathbb{R}$ is locally Lipschitz differentiable around each $x \in \Omega$ and $h_2: \mathbb{R}^n \rightarrow \mathbb{R}$ is convex on $\mathbb{R}^n$.
		\item The proximal operator associated with $f_{\mathcal{C}}$ can be evaluated.\footnote{The proximal operator associated with $f_{\mathcal{C}}$ is defined as $\prox_{f_{\mathcal{C}}}(x):= \arg\min\{ f_{\mathcal{C}}(z)+\frac{1}{2}\|x-z\|^2_2: z\in\mathbb{R}^n \}.$}
	\end{enumerate}
\end{assumption}

Problem \eqref{problem:root} covers many important optimization models in machine learning and scientific computing. In this work, we focus on its application to the scale-invariant sparse signal recovery \cite{Rahimi-Wang-Dong-Lou:2019SIAM-SC,TaoMin:2022SIAM-SC,Wang-Aujol-Gilboa-Lou:2024IPI,Wang-Tao-Nagy-Lou:2021SIAM-ImageScience,Wang-Yan-Rahimi-Lou:2020IEEE,Yin-Esser-Xin:2014CIS,Zeng-Yu-Pong:2021SIAM-OPT,Zhang-Liu-Li:2025ACHA,NaZhang-QiaLi:2022SIAM-OPT}. Let $A\in\mathbb{R}^{m\times n}$ be the sensing matrix, $b\in\mathbb{R}^m$ be the possibly noisy measurement, $\underline{x}\in\mathbb{R}^n$ and $\overline{x}\in\mathbb{R}^n$ be the lower bound and upper bound of the underlying signal, respectively. We are particularly interested in the $L_1/L_2$ robust signal recovery model
\begin{equation}\label{problem: L1dL2}
	\min \left\{\frac{\|x\|_{1}}{\|x\|_{2}}+\frac{\lambda}{2} \dist^2 \left(A x-b, \mathcal{S}_{\mu}\right): x \neq 0,~ \underline{x} \leq x \leq \overline{x},~ x \in \mathbb{R}^{n}\right\},
\end{equation}
and the $L_1/S_K$ robust signal recovery model
\begin{equation}\label{problem: L1dSK}
	\min \left\{\frac{\|x\|_{1}}{\|x\|_{(K)}}+\frac{\lambda}{2} \dist^2(A x-b, \mathcal{S}_{\mu}): x \neq 0,~ \underline{x} \leq x \leq \overline{x},~ x \in \mathbb{R}^{n}\right\},
\end{equation}
where $\|x\|_{(K)}$ denotes the sum of the $K$ largest absolute values of entries in $x$ for the positive integer $K$, $\lambda>0$, $\mathcal{S}_{\mu}:=\{z \in \mathbb{R}^{m}:\|z\|_{0} \leq \mu \}$ with $\mu$ being a nonnegative integer and $\|\cdot\|_{0}$ counting the number of nonzero elements in the vector.
A direct computation yields that $\dist^2(A x-b, \mathcal{S}_{\mu}) = \|Ax-b\|^2_2 - \|\mathcal{T}_{\mu}(Ax-b)\|^2_2$ with $\mathcal{T}_{\mu}(z)$ being the projection operator onto $\mathcal{S}_{\mu}$, which retains the $\mu$ largest elements of the vector $z\in\mathbb{R}^m$ in absolute value and sets the rest to be zero. Moreover, it is not hard to verify that $\|\mathcal{T}_{\mu}(Ax-b)\|^2_2$ is convex. Then, it is obvious that both problems \eqref{problem: L1dL2} and \eqref{problem: L1dSK} are special examples of problem \eqref{problem:root} with $\mathcal{C}$ being the indicator function on $\{x \in \mathbb{R}^{n}: \underline{x} \leq  x \leq  \overline{x}\}$, $f(x)=\|x\|_{1}$, $g(x)=\|x\|_{2}$ or $\|x\|_{(K)}$, $h_{1}(x)=\frac{\lambda}{2}\|A x-b\|^{2}_2$, and $h_{2}(x)=\frac{\lambda}{2}\|\mathcal{T}_{\mu}(A x-b)\|_{2}^{2}$.

Now we briefly review two categories of approaches for solving structured nonsmooth fractional optimization problems as follows. 

The first category of approaches is based on the classical Dinkelbach's method \cite{Crouzeix-Ferland-Schaible:1985JOTA,Dinkelbach-Werner:MS1967,Ibaraki:1983Mathematical_Programming,Schaible:1976Fractional_Programming}, which is widely used for single-ratio fractional programs. Note that problem \eqref{problem:root} can be naively converted into a signal-ratio optimization problem with the numerator $f+hg$, for which the Dinkelbach's method generates the new iterate $x^{k+1}$ by solving
\begin{equation}\label{eq: z09272035}
	\min \left\{f(x)+ (hg)(x) - \theta_k g(x): x\in\mathcal{C} \right\},
\end{equation}
where $\theta_k\in\mathbb{R}$ is updated via $\theta_k:=f(x^k) / g(x^k)+h(x^k)$.
This iterative scheme is not practical in general since it may be very difficult and expensive to solve an optimization problem of type \eqref{eq: z09272035} in each iteration. Recently, proximal algorithms based on the Dinkelbach's method have been developed for a class of nonsmooth single-ratio fractional programs \cite{Bot-Csetnek:2017Optimization,Bot-Dao-Li:2021MathematicsofOperationsResearch,Li-Shen-Zhang-Zhou:2022ACHA,NaZhang-QiaLi:2022SIAM-OPT,Zhang-Liu-Li:2025ACHA,Zhou-Zhang-Li:2024MOR}. 
If we ignore the line-search and extrapolation involved, then applying these algorithms to problem \eqref{problem:root} yields the iteration scheme
\begin{equation}\label{eq: z12131419}
	x^{k+1} \in \arg\min\left\{ f(x)+(hg)(x)-\left\langle \theta_{k} y^{k}, x-x^{k}\right\rangle + \frac{\|x-x^{k}\|_{2}^2}{2 \eta_{k}}: x \in \mathcal{C}\right\},
\end{equation}
or
\begin{equation}\label{eq: z12131420}
	x^{k+1} \in \arg\min\left\{f(x)+\left\langle\nabla (hg)(x^{k})- \theta_{k} y^{k}, x-x^{k}\right\rangle+\frac{\|x-x^{k}\|_{2}^2}{2 \eta_{k}}: x \in \mathcal{C}\right\}, 
\end{equation}
where $y^{k} \in \partial g(x^{k})$, $\eta_k>0$ is some stepsize, and $\theta_{k}$ is defined as in \eqref{eq: z09272035}. It is worth noting that \eqref{eq: z12131419} actually requires that the proximal operator associated with the sum of $f+hg$ and indicator function on $\mathcal{C}$ can be evaluated, while \eqref{eq: z12131420} assumes the smoothness of $hg$. Unfortunately, the above requirements do not hold in general and consequently the iteration schemes \eqref{eq: z12131419} and \eqref{eq: z12131420} are not suitable for tackling problem \eqref{problem:root}. Therefore, we conclude that the Dinkelbach's method and its existing variants are not applicable to solving problem \eqref{problem:root}.

The second category of approaches is based on the so-called quadratic transform \cite{Benson:2004JOTA,Bot-Dao-Li:2023SIAM-OPT,Shen-Yu:2018IEEE}. 
Let $\varphi_i, \psi_i : \mathbb{R}^n \to [0,+\infty)$, $i=1,2,\ldots,N$, be proper closed functions.
The quadratic transform refers to reformulating the problem of maximizing a sum of ratios $\sum_{i=1}^{N} \frac{\varphi_i(x)}{\psi_i(x)}$ into the one of maximizing problem $\sum_{i=1}^{N} 2y_i\sqrt{\varphi_i(x)} - y^2_i\psi_i (x)$ with respect to $y\in\mathbb{R}^N$ and $x$. The equivalence of global optimality is built on the fact that $\max_{y_i} 2y_i\sqrt{\varphi_i(x)} - y^2_i\psi_i (x) = \frac{\varphi_i(x)}{\psi_i(x)}$, whereas the equivalence of the stationarity is further investigated in \cite{Bot-Dao-Li:2023SIAM-OPT}. This transform can be trivially adapted to the following fractional minimization problem
\begin{equation}\label{1219s1}
	\min \left\{ - \frac{f(x)}{g(x)} + h(x):x\in\Omega\cap\mathcal{C} \right\}.
\end{equation}
The resulting minimization problem is in the form of
\begin{equation}\label{1219s2}
	\min \left\{-2 y \sqrt{f(x)}+y^2 g(x) + h(x): x \in \Omega\cap \mathcal{C} ,~ y \in \mathbb{R}\right\} .
\end{equation}
and the equivalence between \eqref{1219s1} and \eqref{1219s2} at the levels of global optimality and stationarity follows immediately.
However, the main fractional minimization problem \eqref{problem:root} considered in this paper is generally independent of \eqref{1219s1} due to the non-negativity requirements imposed on $f$ and $g$. 
Although one may further reformulate \eqref{problem:root} into
\begin{equation}\label{1219s4}
	\min_{x \in \Omega \cap \mathcal{C}} \; \max_{y \in \mathbb{R}} \; 2 y \sqrt{f(x)} - y^2 g(x) + h(x),
\end{equation}
by exploiting again the identity $f(x)/g(x) = \max_{y\in\mathbb{R}} 2y \sqrt{f(x)}-y^2 g(x)$,
we emphasize that this formulation \textit{goes beyond} the scope of the quadratic transform, as it is a min-max problem. The equivalence between the stationarity of \eqref{problem:root} and that of \eqref{1219s4} is in general unknown and requires further investigation.
In addition, as in the treatment of problem \eqref{1219s2}, addressing the min-max problem \eqref{1219s4} typically requires $\sqrt{f}$ to be locally Lipschitz continuous, or even convex in some settings.
These requirements are substantially stronger than assuming local Lipschitz continuity of $f$ itself, which we adopt for the problem \eqref{problem:root}. For example, the function $f(x)=\|x\|_1$ is convex and locally Lipschitz continuous, whereas $\sqrt{f(x)}=\sqrt{\|x\|_1}$ is neither convex nor locally Lipschitz continuous. 

We also note that some algorithms have been specifically designed for solving the scale-variant sparse signal recovery model, which are based on neither the Dinkelbach's method nor the quadratic transform. The alternating direction method of multipliers (ADMM) has been applied to the noiseless $L_{1}/L_{2}$ signal recovery model and the model \eqref{problem: L1dL2} with $\mu = 0$  via various variable-splitting schemes \cite{Rahimi-Wang-Dong-Lou:2019SIAM-SC,Wang-Tao-Nagy-Lou:2021SIAM-ImageScience}. However, these ADMM-type algorithms rely on the structure of the $L_{1}/L_{2}$ function and can not be extended to deal with problem \eqref{problem:root} which admits a more general setting for $f$ and $g$. For example, they can not be generalized to solve model \eqref{problem: L1dSK} due to the nonsmoothness of $\|\cdot\|_{(K)}$. Very recently, the gradient descent flow algorithm (GDFA) is proposed for models \eqref{problem: L1dL2} and \eqref{problem: L1dSK} with $\mu=0$ \cite{Wang-Aujol-Gilboa-Lou:2024IPI}. Although the efficiency of GDFA is demonstrated in the numerical experiments of \cite{Wang-Aujol-Gilboa-Lou:2024IPI}, so far its convergence remains unknown to the best of our knowledge.

Motivated by the above analysis, we aim at designing efficient numerical methods with guaranteed convergence for solving problem \eqref{problem:root} by exploiting its intrinsic structure.
Specifically, the contributions of this paper are summarized as follows. 

First, we present a novel nonfractional min-max reformulation for problem \eqref{problem:root} 
\begin{equation}\label{problem: fractional min-max} 
	\min_{x \in \mathbb{R}^n} \max_{c \in \mathbb{R}}\; \wF(x,c) := 2c f(x)-c^2 f(x) g(x)+h(x) + \iota_{\Omega\cap \mathcal{C}}(x),
\end{equation}
where $\iota_{\Omega\cap \mathcal{C}}:\mathbb{R}^n \to \{0,+\infty\}$ is the indicator function on $\Omega\cap \mathcal{C}$.
The reformulation \eqref{problem: fractional min-max} is equivalent to \eqref{problem:root} in the sense that $x^{\star} \in \mathbb{R}^{n}$ is a global (local) minimizer of problem \eqref{problem:root} if and only if $(x^{\star}, c_{\star})$ is a global (local) min-max point of problem \eqref{problem: fractional min-max} for some $c_{\star} \in \mathbb{R}$. Moreover, we prove that $x^{\star}$ is a (Fr{\'e}chet) stationary point of problem \eqref{problem:root} if and only if $(x^{\star}, c_{\star})$ is a stationary point of problem \eqref{problem: fractional min-max} for some $c_{\star} \in \mathbb{R}$. To the best of our knowledge, problem \eqref{problem: fractional min-max} is the first equivalent min-max reformulation for fractional programs of the type \eqref{problem:root}.

Second, we propose an alternating maximization proximal descent algorithm (AMPDA) for solving the min-max problem \eqref{problem: fractional min-max}. 
Motivated by the min-max reformulation \eqref{problem: fractional min-max}, AMPDA adopts a natural alternating strategy: at the $k$-th iteration, fixing $x^k$, one maximizes $\wF(x^k,c)$ with respect to $c$, which admits a closed-form update $c_k = 1/g(x^k)$; fixing $c_k$, one then minimizes the possibly nonsmooth and nonconvex function $\wF(x,c_k)$ with respect to $x$. 
To handle the $x$-subproblem, we construct a majorization surrogate of $\wF(x,c_k)$ in a neighborhood of the current iterate $x^k$. 
Based on this surrogate, the $x$-update at the $k$-th iteration can be efficiently computed via the proximity operator associated with $f_{\mathcal{C}}$. 
For many practical applications, including the signal recovery models \eqref{problem: L1dL2} and \eqref{problem: L1dSK}, this proximity operator can be evaluated efficiently. 
The proximal stepsize is determined by a line-search procedure, whose stopping criterion is stronger than the standard sufficient descent condition for the objective in \eqref{problem:root}, but is crucial for establishing the convergence of the entire solution sequence generated by the proposed algorithm.
We emphasize that, despite the recent advances in nonsmooth min-max optimization (see, e.g., \cite{Bian-Chen:2024MP,Dai-Wang-Zhang:2024SIAM-OPT,He-Zhang-Xu:2024JGO,Kong-Monteiro:2021SIAM-OPT,Li-Zhu-So:2022NeurIPS-OPT,Lu-Tsaknakis-Hong-Chen:2020TSP}), existing methods are not suitable for solving the min-max problem \eqref{problem: fractional min-max}. 
The main difficulty arises from the fact that, for a fixed maximization variable $c \in \mathbb{R}$, the objective function involves a nonsmooth and nonconvex product term $-c^2 f(x) g(x)$ with respect to the minimization variable $x$. 
The construction of the surrogate function in the $x$-update of the proposed AMPDA is therefore not generic but carefully tailored to the specific structure of the term $-c^2f(x)g(x)$ in the objective of \eqref{problem: fractional min-max}, which makes the development of AMPDA highly nontrivial.

Third, we establish the convergence analysis of the proposed AMPDA. 
We show that any accumulation point of the solution sequence generated by AMPDA is a critical point of problem \eqref{problem:root} under a mild assumption. 
Moreover, we prove that AMPDA can find an $\epsilon$-critical point of problem \eqref{problem:root} within $\mathcal{O}(\epsilon^{-2})$ iterations. 
By further assuming that an auxiliary function is a proper closed KL function, we establish the convergence of the entire solution sequence generated by AMPDA. 
It is worth noting that our whole-sequence convergence analysis for AMPDA does not require any additional assumption on the denominator $g$, whereas existing nonsmooth fractional optimization methods typically assume that either the conjugate of $g$ is continuous on its domain or $g$ is locally Lipschitz differentiable on $\Omega$; see, for example, \cite{Bot-Dao-Li:2021MathematicsofOperationsResearch,Li-Shen-Zhang-Zhou:2022ACHA}.

The rest of this paper is organized as follows. In Section \ref{section:Notation_and_preliminaries}, we introduce notation and present some preliminary materials. In Section \ref{section: Connections between primal and primal-dual problems}, we study the relationships between problem \eqref{problem:root} and its min-max reformulation \eqref{problem: fractional min-max}. We propose AMPDA and establish its subsequential convergence in Section \ref{section: AMDPA}. The sequential convergence of AMPDA is further investigated in Section \ref{section: Global convergence analysis}. Finally, we present some numerical results of AMPDA in Section \ref{section: Numerical experiments}.


\section{Notation and preliminaries}\label{section:Notation_and_preliminaries}

We begin with our preferred notations. We denote the Euclidean space of dimension $n$ and the set of nonnegative integers by $\mathbb{R}^n$ and $\mathbb{N}$. The $\ell_1$-norm, the $\ell_2$-norm and the inner-product of $\mathbb{R}^n$ are denoted by $\|\cdot\|_1,~\|\cdot\|_2$ and $\innerP{\cdot}{\cdot}$, respectively. We use $\mathcal{B}(x,\delta)$ to denote an open ball centered at $x\in\mathbb{R}^n$ with radius $\delta>0$, i.e., $\mathcal{B}(x,\delta):=\{z\in\mathbb{R}^n:\|x-z\|_2 < \delta \} $. The Cartesian product of the sets $\mathcal{A}_1$ and $\mathcal{A}_2$ is denoted by $\mathcal{A}_1 \times \mathcal{A}_2$. The distance from a vector $x\in\mathbb{R}^n$ to a set $\mathcal{A}\subseteq \mathbb{R}^n$ is denoted by $\dist(x,\mathcal{A}) := \inf\{\|x-y\|_2:y\in\mathcal{A} \}$, and we adopt $\dist(x,\emptyset)=+\infty$ for convention. The indicator function on a nonempty set $\mathcal{A}\subseteq\mathbb{R}^n$ is defined by
\begin{equation*}
	\iota_{\mathcal{A}}(x)=
	\begin{cases}
		0, & \text{if $x\in\mathcal{A}$,}\\
		+\infty, & \text{else.}
	\end{cases}
\end{equation*}

An extended-real-value function $\varphi:\mathbb{R}^n\to (-\infty,+\infty]$ is said to be proper if its domain $\dom(\varphi) := \{x\in\mathbb{R}^n: \varphi(x)<+\infty \}$ is nonempty. A proper function $\varphi$ is said to be closed if $\varphi$ is lower semi-continuous on $\mathbb{R}^n$.
A function $\varphi:\mathbb{R}^n\to(-\infty,+\infty]$ is said to be locally Lipschitz continuous around $x\in\dom(\varphi)$ with a modulus $L_x>0$ (resp., relative to $\dom(\varphi)$), if there exists $\delta_x>0$ such that  $|\varphi(u)-\varphi(v)|_2\leq L_x\|u-v\|_2$ holds, for all $u,v\in\mathcal{B}(x,\delta_x)$ (resp., for all $u,v\in\mathcal{B}(x,\delta_x)\cap\dom(\varphi)$).
A function $\varphi:\mathbb{R}^n\to\mathbb{R}$ is said to be locally Lipschitz differentiable around $x\in\dom(\varphi)$ with a modulus $\tilde{L}_x>0$, if there exists $\delta_x>0$, such that 
$\varphi$ is continuously differentiable on $\mathcal{B}(x,\delta_x)$ and
$\|\nabla\varphi(u)-\nabla\varphi(v)\|_2\leq \tilde{L}_x \|u-v\|_2$ holds for all $u,v\in\mathcal{B}(x,\delta_x)$. For a proper closed function $\varphi: \mathbb{R}^{n} \to(-\infty,+\infty]$, the proximal operator of $\varphi$ at $x \in \mathbb{R}^{n}$ is denoted by $\prox_{\varphi}(x)$ and defined as
\begin{equation}
	\prox_{\varphi}(x) := 
	\arg\min\left\{ \varphi(z)+\frac{1}{2}\|x-z\|^2_2: z\in\mathbb{R}^n \right\}.
\end{equation}
The proximal operator $\prox_{\varphi}$ takes an $x\in\mathbb{R}^n$ and maps it into a closed subset of $\mathbb{R}^n$, which may be empty, a singleton or a subset with multiple vectors. If for a given $x\in\mathbb{R}^n$, the function $z\to \varphi(z)+\frac{1}{2}\|x-z\|^2_2 $ is coercive, then $\prox_{\varphi}(x)$ is nonempty (see, for example, \cite[Theorem 6.4]{Beck-Amir:2017SIAM}). Moreover, if $\varphi$ is convex, then $\prox_{\varphi}(x)$ is a singleton for all $x\in\mathbb{R}^n$. In particular, for problem \eqref{problem:root}, the proximal operator  $\prox_{f_{\mathcal{C}}}(x)$ is nonempty for all $x\in\mathbb{R}^n$ due to $f_{\mathcal{C}}(x)\geq 0$.

\subsection{Preliminaries on nonsmooth analysis}\label{subsection: Generalized subdifferential}
We first review some preliminaries on subdifferentials of nonconvex functions \cite{PangJongShi-CuiYing:2021ModernNonconvexNondifferentiableOptimization,Boris:2006Variational_analysis,Rockafellar2004Variational}.
For a proper function $\varphi$, the Fr{\'e}chet subdifferential at $x\in\dom(\varphi)$ is defined by
\begin{equation*}
	\widehat{\partial} \varphi (x):=\left\{ y\in\mathbb{R}^n:\mathop{\lim\inf}\limits_{\substack{z\to x\\z\neq x}}\;
	\frac{\varphi(z)-\varphi(x)-\innerP{y}{z-x}}{\|z-x\|_2}\geq 0
	\right\},
\end{equation*}
and limiting subdifferential at $x\in\dom(\varphi)$ is defined by
\begin{multline*}
	{\partial}\varphi(x) := \{ y\in\mathbb{R}^n:\exists x^k\to x,~\varphi(x^k)\to\varphi(x),~\\ y^k \to y \text{ with $y^k\in\widehat{\partial}\varphi(x^k)$ for each $k$} \}.
\end{multline*}
We set $\widehat{\partial} \varphi(x)=\partial \varphi(x) = \emptyset$ by convention when $x\notin\dom(\varphi)$, and define $\dom(\partial \varphi):= \{x:\partial \varphi(x)\neq \emptyset \}$.
It is clear that $\widehat{\partial}(\alpha \varphi)(x) = \alpha \widehat{\partial}\varphi(x)$ for all $x\in\dom(\varphi)$ and $\alpha >0$.
Moreover, $\widehat{\partial}\varphi(x)\subseteq {\partial}\varphi(x)$ for all $x\in\dom(\varphi)$. If $\varphi$ is differentiable at $x$, then  $\widehat{\partial} \varphi (x) = \{\nabla\varphi(x) \}$, and if $\varphi$ is continuously differentiable around $x$, one also has $\partial\varphi(x) =\{\nabla\varphi(x) \}$ (\cite[Exercise 8.8(b)]{Rockafellar2004Variational}). For a convex $\varphi$, the Fr{\'e}chet and limiting subdifferentials coincide with the classical subdifferential of a convex function at each $x\in\dom(\varphi)$ (\cite[Proposition 8.12]{Rockafellar2004Variational}); namely,
\begin{equation*}
	\widehat{\partial} \varphi(x) = \partial\varphi(x) = \{y\in\mathbb{R}^n:\varphi(z)-\varphi(x)-\innerP{y}{z-x}\geq0, \text{~for all~} z\in\mathbb{R}^n \}.
\end{equation*}
If  $\varphi(x) = \varphi_1(x_1)+\varphi_2(x_2)$ for proper closed functions $\varphi_1:\mathbb{R}^{n_1} \to (-\infty,+\infty]$ and $\varphi_2:\mathbb{R}^{n_2} \to (-\infty,+\infty]$, where $x\in\mathbb{R}^{n}$ is expressed as $(x_1,x_2)$,
then it holds that $\widehat{\partial}\varphi(x) = \widehat{\partial}\varphi_1(x_1)\times \widehat{\partial}\varphi_2(x_2)$ (\cite[Proposition 10.5]{Rockafellar2004Variational}). For proper functions $\varphi_1,~\varphi_2:\mathbb{R}^n\to(-\infty,+\infty]$, there holds $\widehat{\partial}\varphi_1(x) + \widehat{\partial}\varphi_2(x)\subseteq\widehat{\partial}(\varphi_1+\varphi_2)(x)$ (\cite[Corollary 10.9]{Rockafellar2004Variational}), and equality occurs when $\varphi_1$ or $\varphi_2$ is differentiable at $x$ (\cite[Section 8.8 Exercise]{Rockafellar2004Variational}). For a function $\varphi:\mathbb{R}^n \to (-\infty,+\infty]$, the generalized \textit{Fermat's Rule} states that $0\in\widehat{\partial} \varphi(x^{\star})$ if $x^{\star}\in\dom(\varphi)$ is a local minimizer of $\varphi$ (\cite[Theorem 10.1]{Rockafellar2004Variational}).

We finally review some properties of real-valued convex functions. Let $\varphi:\mathbb{R}^n \to \mathbb{R}$ be convex. Then $\varphi$ is continuous on $\mathbb{R}^n$ \cite[Theorem 2.35]{Rockafellar2004Variational}. In view of this, one can verify that $\partial \varphi$ is outer semicontinuous, that is, $\bar{y}\in\partial\varphi (\bar{x})$ holds if there exists some $x^k\to \bar{x}$ and $y^k\to\bar{y}$ with $y^k\in\partial\varphi(x^k)$ for each $k\in\mathbb{N}$. For any bounded $\mathcal{S}\subseteq \mathbb{R}^n$, the $\varphi$ is Lipschitz continuous on $\mathcal{S}$, and $\cup_{x\in\mathcal{S}} \partial \varphi(x)$ is nonempty and bounded (\cite[Proposition 5.4.2]{Bertsekas:Convexoptimizationtheory}).
The convex conjugate function $\varphi^*:\mathbb{R}^n\to[-\varphi(0),+\infty]$ is defined at $y\in\mathbb{R}^n$ as $\varphi^*(y):= \sup\{\innerP{x}{y}-\varphi(x):x\in\mathbb{R}^n \}$. The conjugate $\varphi^*$ is a proper closed convex function \cite[Theorem 12.2]{Rockafellar:70}. Furthermore, for any $x,y\in\mathbb{R}^n$, the following three statements are equivalent (\cite[Proposition 11.3]{Rockafellar2004Variational}):  $\innerP{x}{y} = \varphi(x)+\varphi^*(y) \Leftrightarrow y\in\partial \varphi(x) \Leftrightarrow x\in\partial \varphi^*(y)$. Therefore, it is clear that the \textit{Fenchel-Young Inequality}, i.e., $\varphi(x)+\varphi^*(y)\geq \innerP{x}{y}$, holds for all $x,y\in\mathbb{R}^n$, and it becomes an equality when $y\in\partial \varphi(x)$. In addition, from the \textit{Fenchel-Young Inequality}, we know that $y\in\dom(\varphi^*)$ if $y\in\partial \varphi(x)$ for some $x\in\mathbb{R}^n$.

\subsection{KL property and semialgebra} \label{section: KL property and semialgebra} 
We now review the KL property, which has been used extensively in the convergence analysis of various first order methods.
\begin{definition}[KL property \cite{Attouch-bolt-redont-soubeyran:2010,Attouch-Bolte:MP:2013}]\label{Def:KL_property}
	A proper function $\varphi:\mathbb{R}^n\to(-\infty, +\infty]$ is said to satisfy the KL property at $x\in\mathrm{dom}(\partial \varphi)$ if there exist $\epsilon\in(0,+\infty]$, $\delta>0$ and a continuous concave function $\phi:[0,\epsilon) \to [0,+\infty)$ such that:
	\begin{enumerate}[\upshape(\romannumeral 1)]
		\item $\phi(0)=0$;
		\item $\phi$ is continuously differentiable on $(0,\epsilon)$ with $\phi'>0$;
		\item For any $z\in \mathcal{B}(x,\delta)$ satisfying $\varphi(x)<\varphi(z)<\varphi(x)+\epsilon$, it holds that $\phi'(\varphi(z)-\varphi(x))\mathrm{~dist}(0,\partial\varphi(z)) \geq 1$.
	\end{enumerate}
\end{definition}
A proper function $\varphi:\mathbb{R}^n\to (-\infty, +\infty]$ is called a KL function, if it satisfies the KL property at each point in $\dom(\partial \varphi)$. 
The following proposition provides a framework for full-sequential convergence analysis using the KL property. 
\begin{proposition}\label{ppsition:3.4-3} \cite[Proposition 2.7]{Li-Shen-Zhang-Zhou:2022ACHA} \cite[Theorem 4.3]{Sabach-Teboulle-Voldman:2018PAFA} 
	Let $\Psi:\mathbb{R}^n\times\mathbb{R}^m\to(-\infty, +\infty]$ be proper lower semicontinuous. A bounded sequence $\{(u^k,v^k)\in\mathbb{R}^n\times\mathbb{R}^m:k\in\mathbb{N} \}$ satisfies the following three conditions:
	\begin{enumerate}[{\upshape(\romannumeral 1)}]
		\item (Sufficient descent condition). There exists $a>0$, such that
		\begin{equation*}
			\Psi(u^{k+1},v^{k+1})+a\|u^{k+1}-u^k\|_2^2 \leq \Psi(u^k,v^k), \text{~~~~~for all $k\in\mathbb{N}$;}
		\end{equation*}  
		\item (Relative error condition). There exists $b>0$, such that
		\begin{equation*}
			\dist(0,\partial \Psi(u^{k+1},v^{k+1}))\leq b \|u^{k+1}-u^k\|_2, \text{~~~~~for all $k\in\mathbb{N}$;}
		\end{equation*}
		\item (Continuity condition).\footnote{In \cite[Theorem 4.3]{Sabach-Teboulle-Voldman:2018PAFA}, this condition is replaced by $\lim\sup_{k\in\mathcal{K}\subseteq \mathbb{N}} \Psi(u^k,v^k) \leq \Psi(u^{\star},v^{\star}) $, where $(u^{\star},v^{\star})$ is a limit point of a subsequence $\{(u^{k},v^{k}) \}_{k\in\mathcal{K}}$.} The limit	$\Psi_{\infty} := \lim_{k\to\infty}\Psi(u^k,v^k)$ exists and $\Psi\equiv \Psi_{\infty}$ holds on $\Upsilon$, where $\Upsilon$ denotes the set of accumulation points of $\{(u^k,v^k):k\in\mathbb{N} \}$.
	\end{enumerate}
	If $\Psi$ satisfies the KL property at each point of $\Upsilon$, then we have $\sum_{k=0}^{\infty}\|u^{k+1}-u^k\|_2<+\infty$, $\lim_{k\to\infty}u^k=u^{\star}$ and $0\in\partial \Psi(u^{\star},v^{\star})$ for some $(u^{\star},v^{\star})\in\Upsilon$.
\end{proposition}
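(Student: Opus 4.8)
The plan is to follow the by-now-standard Attouch--Bolte--Svaiter descent-method framework, carefully adapted to the fact that conditions \StatementNum{1} and \StatementNum{2} control only the $u$-component of the consecutive differences. First I would record the immediate consequences of the descent condition: summing \StatementNum{1} over $k$ and using that $\Psi(u^k,v^k)$ decreases to the finite limit $\Psi_\infty$ yields $a\sum_{k=0}^{\infty}\|u^{k+1}-u^k\|_2^2 \le \Psi(u^0,v^0)-\Psi_\infty<+\infty$, so in particular $\|u^{k+1}-u^k\|_2\to 0$. I would also dispose of the trivial case: if $\Psi(u^{k_0},v^{k_0})=\Psi_\infty$ for some $k_0$, then monotonicity together with \StatementNum{1} forces $u^k$ to be eventually constant, and all three conclusions hold immediately; henceforth one may assume $\Psi(u^k,v^k)>\Psi_\infty$ for every $k$.

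Next comes the KL machinery, which is the crux. Since the sequence is bounded, $\Upsilon$ is nonempty and compact, and $\dist((u^k,v^k),\Upsilon)\to 0$ — a general property of any bounded sequence, requiring no control whatsoever of the $v$-differences. Because $\Psi\equiv\Psi_\infty$ on $\Upsilon$ by \StatementNum{3} and $\Psi$ has the KL property at each point of $\Upsilon$, I would invoke the uniformized KL property (in the spirit of Bolte, Sabach and Teboulle) to produce a single desingularizing function $\phi$ and radii $\epsilon,\delta>0$ such that $\phi'(\Psi(z)-\Psi_\infty)\,\dist(0,\partial\Psi(z))\ge 1$ holds uniformly for all $z$ with $\dist(z,\Upsilon)<\delta$ and $\Psi_\infty<\Psi(z)<\Psi_\infty+\epsilon$. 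Since $\dist((u^k,v^k),\Upsilon)\to 0$ and $\Psi(u^k,v^k)$ decreases to $\Psi_\infty$, there is an index $K$ such that $(u^k,v^k)$ lies in this uniform region for all $k\ge K$.

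The heart of the estimate is then a telescoping recursion. Writing $\Delta_k:=\phi(\Psi(u^k,v^k)-\Psi_\infty)$, concavity of $\phi$ gives $\Delta_k-\Delta_{k+1}\ge \phi'(\Psi(u^k,v^k)-\Psi_\infty)\bigl(\Psi(u^k,v^k)-\Psi(u^{k+1},v^{k+1})\bigr)$; bounding $\phi'$ from below through the uniform KL inequality combined with the relative-error bound \StatementNum{2} at index $k$, and bounding the value gap from below through the descent condition \StatementNum{1}, yields $\|u^{k+1}-u^k\|_2^2 \le \frac{b}{a}(\Delta_k-\Delta_{k+1})\|u^k-u^{k-1}\|_2$. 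An application of the arithmetic--geometric mean inequality converts this into $\|u^{k+1}-u^k\|_2 \le \tfrac12\|u^k-u^{k-1}\|_2 + \tfrac{b}{2a}(\Delta_k-\Delta_{k+1})$, and summing over $k\ge K$ (the telescoping part is controlled by $\Delta_K$, while the overlapping half-sum is absorbed into the left-hand side) delivers $\sum_k\|u^{k+1}-u^k\|_2<+\infty$. This is the asserted summability; it makes $\{u^k\}$ Cauchy, whence $u^k\to u^\star$. Finally, to extract a critical accumulation point, any $(u^\star,v^\star)\in\Upsilon$ (its $u$-component is necessarily $u^\star$, since $u^k\to u^\star$) is the limit of a subsequence $(u^{k_j},v^{k_j})$; condition \StatementNum{3} gives $\Psi(u^{k_j},v^{k_j})\to\Psi(u^\star,v^\star)$, while \StatementNum{2} supplies $w^{k_j}\in\partial\Psi(u^{k_j},v^{k_j})$ with $\|w^{k_j}\|_2\le b\|u^{k_j}-u^{k_j-1}\|_2\to 0$, and the closedness of the graph of the limiting subdifferential then yields $0\in\partial\Psi(u^\star,v^\star)$.

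I expect the main obstacle to be the KL step, and precisely the reason the \emph{uniformized} KL property (rather than KL at a single limit point) is indispensable here: because \StatementNum{1}--\StatementNum{2} leave the $v$-differences uncontrolled, the iterates $(u^k,v^k)$ may cluster at different points of $\Upsilon$ along different subsequences, so the KL inequality must be available simultaneously throughout a neighborhood of the entire set $\Upsilon$. Verifying that $\Psi$ is constant on $\Upsilon$ — which is exactly what the continuity condition \StatementNum{3} grants — is what makes the uniformization applicable, and is thus the linchpin of the whole argument.
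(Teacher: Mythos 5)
Your proposal is correct and reproduces, in essentially the same form, the standard Attouch--Bolte--Svaiter/Bolte--Sabach--Teboulle descent-framework argument on which the cited results rest; the paper itself gives no proof of this proposition, deferring entirely to \cite[Proposition 2.7]{Li-Shen-Zhang-Zhou:2022ACHA} and \cite[Theorem 4.3]{Sabach-Teboulle-Voldman:2018PAFA}. All the delicate points (the uniformized KL property over the compact set $\Upsilon$ where $\Psi$ is constant, the fact that only the $u$-increments are controlled, the AM--GM telescoping, and the use of $\Psi(u^{k_j},v^{k_j})\to\Psi(u^{\star},v^{\star})$ to close the subdifferential graph) are handled correctly.
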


Proper closed semialgebraic functions constitute a broad and important class of KL functions.
A function $\varphi:\mathbb{R}^n\to (-\infty,+\infty]$ is called semialgebraic if its graph $\mathrm{Graph}(\varphi):=\{(x,s)\in\mathbb{R}^n\times\mathbb{R}:s=\varphi(x)\}$ is a semialgebraic subset of $\mathbb{R}^{n+1}$, that is, there exist a finite many real polynomials $G_{ij}$, $H_{ij}:\mathbb{R}^{n+1}\to\mathbb{R}$ such that
\begin{equation*}
	\Graph(\varphi)=\bigcup^p_{j=1}\,\bigcap^q_{i=1}\,\{z\in\mathbb{R}^{n+1}:G_{ij}(z)=0,~H_{ij}(z)<0 \}.
\end{equation*}
It is clear that any real polynomial functions is semialgebraic. 
More generally, the class of semialgebraic functions is stable under finite sums, products, and composition, and the convex conjugate of a semialgebraic function is semialgebraic.
We refer the reader to \cite{Attouch-bolt-redont-soubeyran:2010,Bolte-Sabach-Teboulle:MP:2014} for more details on semialgebraic functions and the KL property.


\subsection{Min-max problem}
Consider the following min-max problem:
\begin{equation}\label{problem: min-max general}
	\min_{x\in \mathcal{F}_X} \max_{y\in\mathcal{F}_Y}~ \psi(x,y),
\end{equation}
where $\mathcal{F}_X\subseteq \mathbb{R}^n$ and $\mathcal{F}_Y\subseteq \mathbb{R}^m$ are nonempty, and the objective $\psi:\mathbb{R}^n\times \mathbb{R}^m \to\mathbb{R}$ is continuous. We assume that $\mathop{\arg\max}_{y\in\mathcal{F}_Y} \psi(x,y) \neq \emptyset$ for all $x\in\mathcal{F}_X$, and we define an envelop function $\varphi:\mathcal{F}_{X}\to\mathbb{R} $ as
\begin{equation*}
	\varphi(x) := \max_{y\in\mathcal{F}_Y}~ \psi(x,y).
\end{equation*}
Problem \eqref{problem: min-max general} is called nonconvex-concave if for a fixed $x\in\mathcal{F}_X$, $\psi(x,\cdot)$ is concave, and for a fixed $y\in\mathcal{F}_Y$, $\psi(\cdot,y)$ is not convex. A point  $(x^{\star},y^{\star})\in\mathcal{F}_X \times \mathcal{F}_Y$ is said to be a saddle point of problem \eqref{problem: min-max general} if $\psi(x^{\star},y)\leq \psi(x^{\star},y^{\star})\leq  \psi(x,y^{\star})$ holds for all $(x,y)\in\mathcal{F}_X \times \mathcal{F}_Y$. The concept of saddle points is usually used to describe the optimality of the convex-concave min-max problems. Unfortunately, it is well-known that such points may not exist for nonconvex-concave min-max problems. Due to this, the notions of global and local min-max points are introduced in \cite[Definition 9]{Jin-Netrapalli-Jordan:2020ICML} and \cite[Definition 14]{Jin-Netrapalli-Jordan:2020ICML}, respectively. Here we consider equivalent definitions of these two concepts given in  \cite[Remark 10 and Lemma 16]{Jin-Netrapalli-Jordan:2020ICML} as follows.

\begin{definition}[global min-max point] \label{def: MM-global MinMax}
	\cite[Remark 10]{Jin-Netrapalli-Jordan:2020ICML}
	Let $(x^{\star},y^{\star})\in\mathcal{F}_X \times \mathcal{F}_Y$. We say that $(x^{\star},y^{\star})$
	is a global min-max point of problem \eqref{problem: min-max general}, if $y^{\star}$ is a global maximizer of $\psi(x^{\star},\cdot)$ and $x^{\star}$ is a global minimizer of $\varphi$, i.e.,
	\begin{equation}\label{eq: definition of global min-max point}
		\begin{cases}
			\varphi(x^{\star})\leq \varphi(x),
			&\text{for all $x\in \mathcal{F}_X$,}\\
			\psi(x^{\star},y^{\star}) \geq \psi(x^{\star},y) , &\text{for all $y\in \mathcal{F}_Y$.}
		\end{cases}
	\end{equation}
\end{definition}

\begin{definition}[local min-max point] 
	\label{def: local min-max point}
	\cite[Lemma 16]{Jin-Netrapalli-Jordan:2020ICML}
	We say that $(x^{\star},y^{\star})\in\mathcal{F}_X \times \mathcal{F}_Y$ is a local min-max point of problem \eqref{problem: min-max general}, if $y^{\star}$ is a local maximizer of $\psi(x^{\star},\cdot)$ relative to $\mathcal{F}_Y$, and there exists an $\epsilon_0>0$ such that $x^{\star}$ is always a local minimizer of $\varphi_{\epsilon}$ for all $\epsilon\in(0,\epsilon_0]$ relative to $\mathcal{F}_X$, where the function $\varphi_{\epsilon}$ is defined as $\varphi_{\epsilon}(x):=\max\{ \psi(x,y):y\in\mathbb{R}^m, \|y-y^{\star}\|_2\leq \epsilon \}$.
\end{definition}

Due to the lower semicontinuity of $\varphi$, problem \eqref{problem: min-max general} always has a global min-max point if $\mathcal{F}_X$ is compact. A saddle point of problem \eqref{problem: min-max general} is also a local min-max point. For more discussions on their relationships, we refer the readers to  \cite{Jiang-Chen:2023SIAM-OPT,Jin-Netrapalli-Jordan:2020ICML}.

\section{The connections between problem \eqref{problem:root} and the min-max reformulation \eqref{problem: fractional min-max}}\label{section: Connections between primal and primal-dual problems}

In this section, we investigate the connections between global and local optimal solutions as well as stationary points of problems \eqref{problem:root} and \eqref{problem: fractional min-max}. To this end, we first introduce the extended objective $F: \mathbb{R}^{n} \to (-\infty,+\infty]$ for problem \eqref{problem:root}:
\begin{equation}\label{eq: def F}
	F(x)  := \begin{cases}
		\frac{f(x)}{g(x)}+h(x), & \text{if $x\in\Omega\cap\mathcal{C}$},\\
		+\infty, & \text{else.}
	\end{cases}
\end{equation}
Obviously, problem \eqref{problem:root} can be rewritten as $\min\{F(x):x\in\mathbb{R}^n \}$. 
Recall that $\wF$ denotes the objective of the min-max problem \eqref{problem: fractional min-max}, as defined in Section \ref{section:introduction}. 
By a direct computation, we obtain that for all $(x,c)\in(\Omega\cap\mathcal{C})\times \mathbb{R}$, 
\begin{equation}\label{eq: relation between F(x;c) and F(x)}
	F(x) - \wF(x,c)  = f(x)g(x)\left( \frac{1}{g(x)}-c \right)^2.
\end{equation}
In view of \eqref{eq: relation between F(x;c) and F(x)} as well as the non-negativity of $f$ and $g$, we know that for all $x\in\Omega\cap\mathcal{C}$, 
\begin{equation}\label{eq: F = max_c wF}
	F(x)  = \max_{c\in\mathbb{R}} \wF(x,c),
\end{equation}
where the maximum is attained at some $c_x\in\mathbb{R}$ satisfying $f(x)c_x=f(x)/g(x) $. The following proposition shows the relationship between the global minimizers of problem \eqref{problem:root} and the global min-max points of problem \eqref{problem: fractional min-max}.

\begin{proposition}
	If $x^{\star}$ is a global minimizer of problem \eqref{problem:root}, then $(x^{\star},1/g(x^{\star}))$ is a global min-max point of problem \eqref{problem: fractional min-max}. 
	Conversely, if $(x^{\star},c_{\star})$ is a global min-max point of problem \eqref{problem: fractional min-max}, then $x^{\star}$ is a global minimizer of problem \eqref{problem:root}.
\end{proposition}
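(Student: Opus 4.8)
The plan is to reduce both implications to a single structural fact: the envelope function of the min-max problem \eqref{problem: fractional min-max} coincides with the extended objective $F$ from \eqref{eq: def F} on all of $\mathbb{R}^n$. Concretely, I would first record that the inner maximization recovers $F$ everywhere. For $x\in\Omega\cap\mathcal{C}$ this is exactly \eqref{eq: F = max_c wF}, while for $x\notin\Omega\cap\mathcal{C}$ the term $\iota_{\Omega\cap\mathcal{C}}$ forces $\wF(x,c)=+\infty$ for every $c$, so $\max_{c\in\mathbb{R}}\wF(x,c)=+\infty=F(x)$. Hence the envelope $\varphi(x):=\max_{c\in\mathbb{R}}\wF(x,c)$ (in the sense of Definition \ref{def: MM-global MinMax}) agrees with $F$ on $\mathbb{R}^n$, and the assertion ``$x^{\star}$ is a global minimizer of $\varphi$'' becomes literally the same statement as ``$x^{\star}$ solves \eqref{problem:root}''. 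This is the key bookkeeping step; once it is in place both directions drop out of Definition \ref{def: MM-global MinMax}.

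For the forward direction, suppose $x^{\star}$ is a global minimizer of \eqref{problem:root}. Since such a point has $F(x^{\star})<+\infty$ (a finite value is attained because $\mathcal{C}\cap\Omega\neq\emptyset$ and $f,g$ are finite-valued with $h$ real-valued), we have $x^{\star}\in\Omega\cap\mathcal{C}$, so $g(x^{\star})\neq 0$ and $c_{\star}:=1/g(x^{\star})$ is well-defined. I would then check the two requirements of a global min-max point. First, identity \eqref{eq: relation between F(x;c) and F(x)} gives $F(x^{\star})-\wF(x^{\star},c)=f(x^{\star})g(x^{\star})(1/g(x^{\star})-c)^2\geq 0$ for all $c\in\mathbb{R}$, with equality at $c=c_{\star}$; thus $\wF(x^{\star},c_{\star})=F(x^{\star})\geq\wF(x^{\star},c)$, so $c_{\star}$ is a global maximizer of $\wF(x^{\star},\cdot)$. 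Second, since $\varphi=F$ and $x^{\star}$ minimizes $F$, it minimizes $\varphi$. These are precisely the two conditions of Definition \ref{def: MM-global MinMax}, so $(x^{\star},c_{\star})$ is a global min-max point.

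The converse is shorter: if $(x^{\star},c_{\star})$ is a global min-max point, then by Definition \ref{def: MM-global MinMax} its first component $x^{\star}$ is a global minimizer of the envelope $\varphi$; invoking the identification $\varphi=F$ from the first step, $x^{\star}$ is a global minimizer of $F$, i.e.\ of \eqref{problem:root}. Here one should note that $x^{\star}\in\Omega\cap\mathcal{C}$, since otherwise $\varphi(x^{\star})=+\infty$ would force $F$ to be identically $+\infty$, contradicting $\mathcal{C}\cap\Omega\neq\emptyset$.

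I do not expect a genuinely hard step: the content is carried entirely by the square-completion identity \eqref{eq: relation between F(x;c) and F(x)}. The only points demanding care, and thus the closest thing to an obstacle, are (i) confirming that the envelope equals $F$ \emph{off} the feasible set $\Omega\cap\mathcal{C}$ as well, so that the minimization in Definition \ref{def: MM-global MinMax} ranges over all of $\mathbb{R}^n$ and coincides with \eqref{problem:root}, and (ii) ensuring the candidate dual value $c_{\star}=1/g(x^{\star})$ is well-defined, which is why one first verifies membership of the minimizer in $\Omega$. Both are routine, but they are exactly the places where the non-negativity of $f,g$ and the convention for the indicator function are actually used.
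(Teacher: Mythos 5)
Your proposal is correct and follows essentially the same route as the paper: both directions rest on the identity \eqref{eq: F = max_c wF} (equivalently the square-completion \eqref{eq: relation between F(x;c) and F(x)}) combined with Definition \ref{def: MM-global MinMax}. The extra bookkeeping you include (identifying the envelope with $F$ off $\Omega\cap\mathcal{C}$ and checking that $c_{\star}=1/g(x^{\star})$ is well-defined) is a slightly more careful version of what the paper does implicitly by restricting the argmin to $\Omega\cap\mathcal{C}$, not a different argument.
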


\begin{proof}
	If $x^{\star}$ is a global minimizer of \eqref{problem:root}, then we have $x^{\star} \in \arg \min \{F(x): x \in \Omega \cap \mathcal{C}\}$. This, together with \eqref{eq: F = max_c wF}, leads to
	\begin{equation}\label{eq: z08251506}
		x^{\star} \in \arg \min \left\{\max _{c \in \mathbb{R}} \wF(x, c): x \in \Omega \cap \mathcal{C} \right\} .
	\end{equation}
	In view of \eqref{eq: F = max_c wF}, we derive that $1 / g(x^{\star}) \in \arg\max\{\wF(x^{\star},c):c\in\mathbb{R} \}$, which along with \eqref{eq: z08251506} indicates that $(x^{\star}, 1 / g(x^{\star}))$ is a global min-max point of \eqref{problem: fractional min-max}, according to Definition \ref{def: MM-global MinMax}.
	
	Conversely, if $(x^{\star}, c_{\star})$ is a global min-max point of \eqref{problem: fractional min-max}, then we derive from Definition \ref{def: MM-global MinMax} that $x^{\star}$ satisfies \eqref{eq: z08251506} . Thus, we conclude that $x^{\star} \in \arg\min\{F(x): x \in \Omega \cap \mathcal{C}\}$ from \eqref{eq: z08251506} and \eqref{eq: F = max_c wF}.  This completes the proof.
	\hfill $\square$ \end{proof}

We next establish the connection between the local minimizers of problem \eqref{problem:root} and the local min-max points of problem \eqref{problem: fractional min-max}.

\begin{proposition}
	If $x^{\star}$ is a local minimizer of problem  \eqref{problem:root}, then $(x^{\star},1/g(x^{\star}))$ is a local min-max point of problem \eqref{problem: fractional min-max}. 
	Conversely, if $(x^{\star},c_{\star})$ is a local min-max point of problem \eqref{problem: fractional min-max}, then $x^{\star}$ is a local minimizer of problem \eqref{problem:root}.
\end{proposition}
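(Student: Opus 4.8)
The plan is to verify both implications directly against Definition \ref{def: local min-max point}, exploiting two structural facts already recorded in the excerpt. First, by \eqref{eq: F = max_c wF} the envelope of \eqref{problem: fractional min-max} coincides with $F$, i.e. $\max_{c\in\mathbb{R}}\wF(x,c)=F(x)$ for $x\in\Omega\cap\mathcal{C}$. Second, for each fixed $x\in\Omega\cap\mathcal{C}$ the map $c\mapsto\wF(x,c)=-f(x)g(x)\,c^2+2f(x)\,c+h(x)$ is a concave quadratic (its leading coefficient $-f(x)g(x)$ is nonpositive), attaining its global maximum $F(x)$ at $c=1/g(x)$; in particular, by \eqref{eq: relation between F(x;c) and F(x)}, every local maximizer of $\wF(x^\star,\cdot)$ is automatically a global one. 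Throughout I set $c_\star:=1/g(x^\star)$ and write $\varphi_\epsilon(x):=\max\{\wF(x,c):|c-c_\star|\le\epsilon\}$ for the truncated envelope of Definition \ref{def: local min-max point}.

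For the forward implication, condition (i) of Definition \ref{def: local min-max point} is immediate: $c_\star=1/g(x^\star)$ is the global, hence local, maximizer of the concave quadratic $\wF(x^\star,\cdot)$, so $\varphi_\epsilon(x^\star)=\wF(x^\star,c_\star)=F(x^\star)$. The substance lies in condition (ii), that $x^\star$ minimizes each $\varphi_\epsilon$ locally. Here I would use that the finite convex function $g$ is continuous with $g(x^\star)\neq0$, so $x\mapsto1/g(x)$ is continuous on a ball $\mathcal{B}(x^\star,r)\subseteq\Omega$; thus for any fixed $\epsilon>0$ there is $\delta_\epsilon\in(0,r]$ with $|1/g(x)-c_\star|\le\epsilon$ whenever $x\in\mathcal{B}(x^\star,\delta_\epsilon)$. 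On this ball the unconstrained inner maximizer $1/g(x)$ of $\wF(x,\cdot)$ lies inside the window $[c_\star-\epsilon,c_\star+\epsilon]$, so $\varphi_\epsilon(x)=F(x)$ for $x\in\mathcal{C}$, while $\varphi_\epsilon(x)=+\infty=F(x)$ for $x\notin\mathcal{C}$; in either case $\varphi_\epsilon\equiv F$ on $\mathcal{B}(x^\star,\delta_\epsilon)$. Intersecting with the neighborhood on which $x^\star$ minimizes $F$ then yields $\varphi_\epsilon(x)=F(x)\ge F(x^\star)=\varphi_\epsilon(x^\star)$ nearby, establishing (ii); since $\epsilon$ was arbitrary one may take $\epsilon_0$ to be any positive constant (the neighborhood is allowed to depend on $\epsilon$).

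For the converse, condition (i) together with concavity gives that $c_\star$ is a global maximizer of $\wF(x^\star,\cdot)$, whence again $\varphi_\epsilon(x^\star)=F(x^\star)$. The argument is then a one-line sandwich that, pleasantly, needs no continuity of $g$: enlarging the maximization set only increases the value, so for every $x$,
\begin{equation*}
	F(x)=\max_{c\in\mathbb{R}}\wF(x,c)\ \ge\ \max_{|c-c_\star|\le\epsilon}\wF(x,c)=\varphi_\epsilon(x).
\end{equation*}
Taking $x$ in the neighborhood on which $x^\star$ is a local minimizer of $\varphi_\epsilon$ gives $F(x)\ge\varphi_\epsilon(x)\ge\varphi_\epsilon(x^\star)=F(x^\star)$, so $x^\star$ is a local minimizer of \eqref{problem:root}. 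One small point to dispatch first is that $x^\star\in\Omega\cap\mathcal{C}$, so that $F(x^\star)<+\infty$: if $x^\star$ were a limit of points of $\Omega\cap\mathcal{C}$, those points have finite $\varphi_\epsilon$-value and would violate local minimality of $\varphi_\epsilon$ (as $\varphi_\epsilon(x^\star)=+\infty$); the only alternative, $x^\star$ isolated from $\Omega\cap\mathcal{C}$, is the degenerate case lying outside $\mathrm{dom}(F)$ and excluded by convention.

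I expect the main obstacle to be condition (ii) of the forward implication. The difficulty is structural: Definition \ref{def: local min-max point} compares $x^\star$ not with $F$ but with the \emph{restricted} envelope $\varphi_\epsilon$, and the trivial bound $\varphi_\epsilon\le F$ points the wrong way for a minimization statement. The crux is therefore to upgrade this to the equality $\varphi_\epsilon=F$ on a neighborhood of $x^\star$, which hinges on keeping the unconstrained inner maximizer $1/g(x)$ inside the $\epsilon$-window; continuity of $g$ at $x^\star$ (with $g(x^\star)\neq0$) is exactly what secures this, and is the only analytic input beyond the algebraic identity \eqref{eq: relation between F(x;c) and F(x)}. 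By contrast, the converse comes essentially for free from the inequality $F\ge\varphi_\epsilon$.
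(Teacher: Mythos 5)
Your proposal is correct and follows essentially the same route as the paper: for the forward direction, both arguments use continuity of $1/g$ at $x^{\star}$ to keep the unconstrained inner maximizer $1/g(x)$ inside the $\epsilon$-window, so that $\varphi_{\epsilon}$ agrees with (the paper: is sandwiched against) $F$ near $x^{\star}$; for the converse, both rest on the trivial bound $\varphi_{\epsilon}\leq F$ together with $\varphi_{\epsilon}(x^{\star})=F(x^{\star})$. The only cosmetic differences are that you invoke concavity of $\wF(x^{\star},\cdot)$ to upgrade the local maximizer $c_{\star}$ to a global one where the paper uses the first-order condition $f(x^{\star})c_{\star}=f(x^{\star})/g(x^{\star})$, and that you state the neighborhood identity $\varphi_{\epsilon}\equiv F$ where the paper chains inequalities.
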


\begin{proof}
	We first prove that the statement that $(x^{\star}, 1 / g(x^{\star}))$ is a local min-max point of problem \eqref{problem: fractional min-max} follows from the $x^{\star}$ being a local minimizer of problem \eqref{problem:root}. Since \eqref{eq: F = max_c wF} indicates that $1 / g(x^{\star})$ is a global maximizer of $\wF(x^{\star}, \cdot)$, it suffices to show that, for an arbitrarily fixed $\epsilon>0$, the $x^{\star}$ is a local minimizer of
	\begin{equation}\label{eq: def varphi_eps}
		\varphi_{\epsilon}(x):=\max\left\{ \wF(x, c): c\in\mathbb{R},~|c-1 / g(x^{\star})| \leq \epsilon\right\},
	\end{equation}
	relative to $\Omega\cap\mathcal{C}$ according to Definition \ref{def: local min-max point}. Let this $\epsilon>0$ be fixed. Firstly, due to the continuity of $1 / g$ around $x^{\star}\in\Omega$, there exists $\delta_{\epsilon}>0$ such that $|1 / g(x)-1 / g(x^{\star})| \leq \epsilon$ for all $x \in \mathcal{B}(x^{\star}, \delta_{\epsilon})$. In view of this, we deduce from the definition \eqref{eq: def varphi_eps} that
	\begin{equation}\label{eq: z08251611}
		\wF(x, 1 / g(x)) \leq \varphi_{\varepsilon}(x) \text {,~~ for all } x \in \mathcal{B}(x^{\star}, \delta_{\epsilon})\cap(\Omega \cap \mathcal{C}) \text {. }
	\end{equation}
	Secondly, since $x^{\star}$ is a local minimizer of \eqref{problem:root}, there exists $\delta_{\epsilon}' \in(0, \delta_{\epsilon})$, such that
	\begin{equation}\label{eq: z08251610}
		F(x^{\star}) \leq F(x) \text {,~~ for all } x \in \mathcal{B}(x^{\star}, \delta_{\epsilon}') \cap(\Omega \cap \mathcal{C}) \text {. }
	\end{equation}
	Thus, it holds for all $x \in \mathcal{B}(x^{\star}, \delta_{\epsilon}') \cap(\Omega \cap \mathcal{C})$ that
	\begin{equation}\label{eq: z08251612}
		\varphi_{\epsilon}(x^{\star}) \stackrel{\StatementNumUpperCase{1}}{\leq} \max _{c \in \mathbb{R}}
		\wF(x^{\star}, c) \stackrel{\StatementNumUpperCase{2}}{=} F(x^{\star}) \stackrel{\StatementNumUpperCase{3}}{\leq} F(x) \stackrel{\StatementNumUpperCase{4}}{=} \widetilde{F}(x, 1 / g(x)) \stackrel{\StatementNumUpperCase{5}}{\leq} \varphi_{\varepsilon}(x),
	\end{equation}
	where $\StatementNumUpperCase{1}\sim\StatementNumUpperCase{5}$ follow from \eqref{eq: def varphi_eps}, \eqref{eq: F = max_c wF}, \eqref{eq: z08251610}, \eqref{eq: F = max_c wF} and \eqref{eq: z08251611}, respectively. According to \eqref{eq: z08251612}, we deduce that $x^{\star}$ is a local minimizer of $\varphi_{\epsilon}$ for an arbitrary $\epsilon>0$.
	
	Conversely, we let $(x^{\star}, c_{\star})$ be a local min-max point of \eqref{problem: fractional min-max}. Then, according to Definition \ref{def: local min-max point}, we know that $c_{\star}$ is a local maximizer of $\wF(x^{\star}, \cdot)$. Hence, we have $0=\nabla(\wF(x^{\star}, \cdot))(c_{\star})$, which leads to $f(x^{\star}) c_{\star}=f(x^{\star}) / g(x^{\star})$. 	
	In addition, also according to Definition \ref{def: local min-max point}, we know that $x^{\star}$ is a local minimizer of $\varphi_{\epsilon}$ with some $\epsilon>0$ relative to $\Omega \cap \mathcal{C}$. Thus, there exists some $\delta_{\epsilon}>0$ such that
	\begin{equation}\label{eq: z08251633}
		\varphi_{\epsilon}(x^{\star})\leq \varphi_{\epsilon}(x), \text{\quad for all $x\in\mathcal{B}(x^{\star},\delta_{\epsilon})\cap (\Omega\cap\mathcal{C})$.}
	\end{equation}
	For the left side of \eqref{eq: z08251633}, we have
	\begin{equation}\label{eq: z08251636}
		F(x^{\star})=\wF(x^{\star}, c_{\star}) \leq \varphi_{\epsilon}(x^{\star}),
	\end{equation}
	where the first equation follows from \eqref{eq: F = max_c wF} and $f(x^{\star}) c_{\star}=f(x^{\star}) / g(x^{\star})$.	As to the right side of \eqref{eq: z08251633}, it is clear that
	\begin{equation}\label{eq: z08251637}
		\varphi_{\epsilon}(x) \leq \max \left\{\wF(x, c): c \in \mathbb{R}\right\}=F(x),
		\quad \text{for all $x \in \Omega \cap \mathcal{C}$.}
	\end{equation}
	Together with \eqref{eq: z08251636} and \eqref{eq: z08251637}, we deduce from \eqref{eq: z08251633} that $F(x^{\star}) \leq F(x)$ for any $x \in \mathcal{B}(x^{\star}, \delta_{\epsilon}) \cap(\Omega \cap \mathcal{C})$, which yields that $x^{\star}$ is a local minimizer of \eqref{problem:root}. 
	\hfill $\square$ \end{proof}

Now we study the relationship between stationary points of problems \eqref{problem:root} and \eqref{problem: fractional min-max}. To this end, for $(x,c)\in(\Omega\cap\mathcal{C})\times\mathbb{R} $, we let $\widehat{\partial}_x \wF(x,c)$ and $\nabla_c \wF(x,c)$ denote the partial Fr{\'e}chet subdifferential of $\wF$ with respect to $x$ and the partial derivative of $\wF$ with respect to $c$, respectively.

\begin{proposition}\label{Prop: relation between the stationary points of F and wF}
	If $0\in \widehat{\partial} F(x^{\star})$, then for $c_{\star} = 1/g(x^{\star})$ it holds that
	\begin{equation}\label{eq: z08172057}
		\left\{
		\begin{aligned}
			0\in \widehat{\partial}_x \, \wF (x^{\star},c_{\star}),\\
			0= \nabla_c \wF (x^{\star},c_{\star}).
		\end{aligned}\right.
	\end{equation}
	Conversely, if $(x^{\star},c_{\star})$ satisfies \eqref{eq: z08172057}, then $0\in \widehat{\partial} F(x^{\star})$.
\end{proposition}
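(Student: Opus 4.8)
The plan is to reduce the whole statement to a single local observation: near $x^\star$ the two objectives $F$ and $\wF(\cdot,c_\star)$ differ by a function that is differentiable at $x^\star$ with vanishing gradient. The condition on $c$ is essentially free, so I would dispose of it first. A direct differentiation gives $\nabla_c \wF(x,c) = 2f(x)\bigl(1 - c\,g(x)\bigr)$; substituting $c_\star = 1/g(x^\star)$ yields $\nabla_c \wF(x^\star,c_\star) = 0$, independently of any stationarity in $x$. Thus the second line of \eqref{eq: z08172057} holds automatically at $(x^\star,c_\star)$ in both directions, and the proposition collapses to the equivalence $0 \in \widehat\partial F(x^\star) \Leftrightarrow 0 \in \widehat\partial_x \wF(x^\star,c_\star)$ (note that in either direction $x^\star \in \Omega\cap\mathcal{C}$, so $F$ and $\wF(\cdot,c_\star)$ are finite there).

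To obtain this equivalence I would exploit the identity \eqref{eq: relation between F(x;c) and F(x)}. Since $g$ is convex it is continuous, so $\Omega = \{g \neq 0\}$ is open and $x^\star \in \Omega$ admits a ball $\mathcal{B}(x^\star,\delta) \subseteq \Omega$ on which $g$ is bounded away from $0$. On this ball the indicator terms of $F$ and $\wF(\cdot,c_\star)$ coincide and cancel, so \eqref{eq: relation between F(x;c) and F(x)} reads $F(x) = \wF(x,c_\star) + D(x)$ with $D(x) := f(x)g(x)\bigl(1/g(x) - c_\star\bigr)^2 \geq 0$ and $D(x^\star) = 0$. Because the Fr\'echet subdifferential is a purely local object, $\widehat\partial F(x^\star)$ may be computed through this decomposition.

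The crux, and the step I expect to be the main obstacle, is to prove that $D$ is Fr\'echet differentiable at $x^\star$ with $\nabla D(x^\star) = 0$, even though $f$ and $g$ are merely locally Lipschitz, respectively convex. The point is that the factor $u(x) := 1/g(x) - c_\star$ vanishes at $x^\star$ and is locally Lipschitz there: $g$ is locally Lipschitz and bounded below by a positive constant on $\mathcal{B}(x^\star,\delta)$, hence $1/g$ is locally Lipschitz, so $|u(x)| = |u(x) - u(x^\star)| \leq L\|x - x^\star\|$. Combining this with the local boundedness of $f$ and $g$ yields the quadratic bound $0 \leq D(x) \leq C\|x - x^\star\|^2$ near $x^\star$. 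Since $D(x^\star) = 0$, this forces $D(x) = o(\|x - x^\star\|)$, i.e. $D$ is differentiable at $x^\star$ with zero gradient.

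Finally, I would invoke the sum rule for Fr\'echet subdifferentials, which holds with equality when one summand is differentiable (as recalled in the preliminaries). Applied to $F = \wF(\cdot,c_\star) + D$ at $x^\star$, it gives $\widehat\partial F(x^\star) = \widehat\partial_x \wF(x^\star,c_\star) + \{\nabla D(x^\star)\} = \widehat\partial_x \wF(x^\star,c_\star)$. Both implications of the proposition then follow at once: membership of $0$ transfers between the two subdifferentials, while the $c$-component of \eqref{eq: z08172057} is supplied by the first paragraph.
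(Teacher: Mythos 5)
Your first paragraph and your treatment of the forward implication are correct and essentially the paper's own argument: the paper also writes $F-\wF(\cdot,c_\star)$ as $\varphi(x)=f(x)g(x)\bigl(1/g(x)-c_\star\bigr)^2$, shows this remainder is $O(\|x-x^\star\|_2^2)$ when $c_\star=1/g(x^\star)$ (hence differentiable at $x^\star$ with zero gradient), and transfers $0$ between the two subdifferentials. The computation $\nabla_c\wF(x^\star,c_\star)=2f(x^\star)\bigl(1-c_\star g(x^\star)\bigr)=0$ is also right.

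The gap is in the converse. There the hypothesis is only that $(x^\star,c_\star)$ satisfies \eqref{eq: z08172057}; nothing forces $c_\star=1/g(x^\star)$. The equation $0=\nabla_c\wF(x^\star,c_\star)$ yields $f(x^\star)\bigl(1-c_\star g(x^\star)\bigr)=0$, which leaves open the case $f(x^\star)=0$ with $c_\star\neq 1/g(x^\star)$ (and this case must be covered: the paper's stated equivalence is ``for some $c_\star\in\mathbb{R}$''). In that case your factor $u(x)=1/g(x)-c_\star$ does \emph{not} vanish at $x^\star$, the bound $0\le D(x)\le C\|x-x^\star\|_2^2$ fails, and $D$ behaves near $x^\star$ like a positive constant times $f$, which is merely locally Lipschitz --- so $D$ need not be differentiable at $x^\star$ and the equality form of the sum rule is unavailable. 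What survives is only that $D\ge 0=D(x^\star)$ on a neighborhood, whence $0\in\widehat{\partial}D(x^\star)$ by the generalized Fermat rule; the paper then closes the argument with a difference rule for Fr\'echet subdifferentials (\cite[Theorem 3.1]{Mordukhovich:2006Frechet_subdifferential}), giving $\widehat{\partial}_x\wF(x^\star,c_\star)=\widehat{\partial}(F-D)(x^\star)\subseteq\widehat{\partial}F(x^\star)-\{0\}$. You need this (or an equivalent) extra step; as written, your converse only proves the special case $c_\star=1/g(x^\star)$.
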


\begin{proof}
	Let $(x^{\star},c_{\star})\in(\Omega\cap\mathcal{C})\times\mathbb{R}$.
	Due to the continuity of $g$ around $x^{\star} \in \Omega$, there exists $\delta>0$ such that $g(x)>0$ holds for all $x\in\mathcal{B}(x^{\star}, \delta)$. Then, we have
	\begin{equation}\label{eq: z08252005}
		\widehat{\partial}_x\wF (x^{\star}, c_{\star})
		=\widehat{\partial}\left(f/g+h+\iota_{\mathcal{C}}-\varphi\right)(x^{\star})=\widehat{\partial}(F-\varphi)(x^{\star}),
	\end{equation}
	where $\varphi(x):=\frac{f(x)}{g(x)}-2c_{\star}f(x) + c_{\star}^2f(x)g(x)$, and equally, we have
	\begin{equation}\label{eq: z08251951}
		\varphi(x)=f(x) g(x)\left(\frac{1}{g(x)}-c_{\star}\right)^{2}, \text {~~~ for all $ x \in \mathcal{B}(x^{\star}, \delta)$}.
	\end{equation}
	
	
	We first show that \eqref{eq: z08172057} holds if  $0 \in \widehat{\partial} F(x^{\star})$ and $c_{\star}=1 / g(x^{\star})$. Firstly, since $1 / g(x^{\star}) \in \arg\max\{ \wF(x^{\star},c): c \in \mathbb{R} \}$ (see \eqref{eq: F = max_c wF}) and $\wF(x^{\star}, \cdot)$ is differentiable, we derive that $0=\nabla_{c}\wF(x^{\star}, c_{\star})$ for the $c_{\star}=1 / g(x^{\star})$. Secondly, according to \eqref{eq: z08251951} and $c_{\star}=1 / g(x^{\star})$, we have $|\varphi(x)-\varphi(x^{\star})|=f(x)(g(x))^{-1}(g(x^{\star}))^{-2}|g(x)-g(x^{\star})|^{2}$. This implies that $\varphi$ is differentiable at $x^{\star}$ with $\nabla \varphi\left(x^{\star}\right)=0$, since the real-valued convex function $g$ is Lipschitz continuous on $\mathcal{B}(x^{\star}, \delta)$. Therefore, $0 \in \widehat{\partial} F(x^{\star})$ yields $0 \in \widehat{\partial}(F-\varphi)(x^{\star})$, which together with \eqref{eq: z08252005} implies that $0 \in \widehat{\partial}_{x}\wF(x^{\star}, c_{\star})$.
	
	Conversely, we suppose that \eqref{eq: z08172057} holds. Then $0=\nabla_{c}\wF(x^{\star}, c_{\star})$ implies that $f(x^{\star}) c_{\star}=f(x^{\star}) / g(x^{\star})$. Plugging this into \eqref{eq: z08251951} and noting that $f$ and $g$ are non-negative, we deduce that $\varphi(x^{\star})=0$ is the minimum of $\varphi$ on $ x \in \mathcal{B}(x^{\star}, \delta)$. Hence, by invoking the generalized \textit{Fermat's Rule}, we derive $0\in\widehat{\partial}\varphi(x^{\star})$. With the help of \cite[Theorem 3.1]{Mordukhovich:2006Frechet_subdifferential}, we have 
	\begin{equation}\label{eq: z10092147}
		\widehat{\partial}(F-\varphi)(x^{\star}) \subseteq \bigcap_{v^{\star}\in\widehat{\partial}\varphi(x^{\star})} (\widehat{\partial}F(x^{\star}) - v^{\star})\subseteq \widehat{\partial}F(x^{\star})-\{ 0\}.
	\end{equation}
	Together with the first relation of \eqref{eq: z08172057}, the equation \eqref{eq: z08252005}, and the relation  \eqref{eq: z10092147}, we finally conclude that $0 \in \widehat{\partial} F(x^{\star})$. This completes the proof.
	\hfill $\square$ \end{proof}

With the help of Proposition \ref{Prop: relation between the stationary points of F and wF}, we present a useful first-order necessary condition for problem \eqref{problem:root}.

\begin{corollary}\label{Corollary: critical point}
	If $0\in\widehat{\partial} F(x^{\star})$, then for $c_{\star} = 1/g(x^{\star})$  it holds that
	\begin{equation}\label{eq: def lifted stationary point}
		0\in \widehat{\partial}(c_{\star} f+\iota_{\mathcal{C}})\left(x^{\star}\right)-c_{\star}^{2} f(x^{\star}) \partial g\left(x^{\star}\right)+\nabla h_{1}\left(x^{\star}\right)-\partial h_{2}\left(x^{\star}\right).
	\end{equation}
	Moreover, if $g$ and $h_2$ are differentiable at $x^{\star}$, then $0\in\widehat{\partial} F(x^{\star})$ is equivalent to \eqref{eq: def lifted stationary point} with $c_{\star} = 1/g(x^{\star})$.
\end{corollary}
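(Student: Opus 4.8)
The plan is to reduce the claim to the $x$-stationarity of the min-max objective $\wF$ and then unfold $\widehat{\partial}_x\wF(\cdot,c_\star)$ by a careful regrouping of its terms. By Proposition~\ref{Prop: relation between the stationary points of F and wF}, the hypothesis $0\in\widehat{\partial}F(x^\star)$ gives, for $c_\star=1/g(x^\star)$, that $0\in\widehat{\partial}_x\wF(x^\star,c_\star)$ (the companion condition $0=\nabla_c\wF(x^\star,c_\star)$ will only be needed for the converse). Since $x^\star\in\Omega$ and $g$ is continuous, there is a ball $\mathcal{B}(x^\star,\delta)$ on which $g>0$, so that $\iota_{\Omega\cap\mathcal{C}}$ coincides with $\iota_{\mathcal{C}}$ there and the quotient never appears in $\wF$.

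The key algebraic step, valid on $\mathcal{B}(x^\star,\delta)$, is
\begin{equation*}
2c_\star f(x)-c_\star^2 f(x)g(x)=c_\star f(x)-c_\star^2 f(x^\star)\bigl(g(x)-g(x^\star)\bigr)+r(x),
\end{equation*}
where $r(x):=-c_\star^2\bigl(f(x)-f(x^\star)\bigr)\bigl(g(x)-g(x^\star)\bigr)$; this uses $c_\star^2 g(x^\star)=c_\star$, which is precisely where $c_\star=1/g(x^\star)$ enters. Hence, up to an additive constant,
\begin{equation*}
\wF(x,c_\star)=\underbrace{\bigl[c_\star f(x)+\iota_{\mathcal{C}}(x)+h_1(x)+r(x)\bigr]}_{=:\Phi_1(x)}-\underbrace{\bigl[c_\star^2 f(x^\star)g(x)+h_2(x)\bigr]}_{=:G(x)}\quad\text{on }\mathcal{B}(x^\star,\delta).
\end{equation*}
First I would show that $r$ is Fr\'echet differentiable at $x^\star$ with $\nabla r(x^\star)=0$: since $f$ is locally Lipschitz on $\Omega$ and the convex $g$ is locally Lipschitz near $x^\star$, one has $|r(x)|\le c_\star^2 L_f L_g\|x-x^\star\|_2^2=o(\|x-x^\star\|_2)$. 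Therefore $h_1+r$ is differentiable at $x^\star$ with gradient $\nabla h_1(x^\star)$, and the exact sum rule (equality when one summand is differentiable) gives $\widehat{\partial}\Phi_1(x^\star)=\widehat{\partial}(c_\star f+\iota_{\mathcal{C}})(x^\star)+\nabla h_1(x^\star)$. Moreover, as $c_\star^2 f(x^\star)\ge 0$ and $g,h_2$ are real-valued convex, $G$ is convex, and the convex sum rule yields $\partial G(x^\star)=c_\star^2 f(x^\star)\partial g(x^\star)+\partial h_2(x^\star)$.

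Now I would apply the Mordukhovich difference rule (\cite[Theorem 3.1]{Mordukhovich:2006Frechet_subdifferential}), exactly as in the proof of Proposition~\ref{Prop: relation between the stationary points of F and wF}: since $G$ is convex, $\widehat{\partial}G=\partial G$, and
\begin{equation*}
\widehat{\partial}_x\wF(x^\star,c_\star)=\widehat{\partial}(\Phi_1-G)(x^\star)\subseteq\bigcap_{v\in\partial G(x^\star)}\bigl(\widehat{\partial}\Phi_1(x^\star)-v\bigr).
\end{equation*}
Feeding $0\in\widehat{\partial}_x\wF(x^\star,c_\star)$ into the right-hand side forces $v\in\widehat{\partial}\Phi_1(x^\star)$ for every $v\in\partial G(x^\star)$; as $\partial G(x^\star)\ne\emptyset$, this yields $0\in\widehat{\partial}\Phi_1(x^\star)-\partial G(x^\star)$, which upon substituting the two expressions above is exactly \eqref{eq: def lifted stationary point}. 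For the equivalence under differentiability of $g,h_2$ at $x^\star$, the map $G$ becomes differentiable at $x^\star$, so the difference rule sharpens to the equality $\widehat{\partial}_x\wF(x^\star,c_\star)=\widehat{\partial}(c_\star f+\iota_{\mathcal{C}})(x^\star)+\nabla h_1(x^\star)-c_\star^2 f(x^\star)\nabla g(x^\star)-\nabla h_2(x^\star)$; hence \eqref{eq: def lifted stationary point} is equivalent to $0\in\widehat{\partial}_x\wF(x^\star,c_\star)$. Since $c_\star=1/g(x^\star)$ makes $0=\nabla_c\wF(x^\star,c_\star)$ hold automatically, the pair $(x^\star,c_\star)$ then satisfies \eqref{eq: z08172057}, and the converse part of Proposition~\ref{Prop: relation between the stationary points of F and wF} returns $0\in\widehat{\partial}F(x^\star)$.

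The main obstacle I anticipate is the correct handling of the nonsmooth, nonconvex product $-c_\star^2 f(x)g(x)$: isolating the second-order remainder $r$ and verifying it is differentiable with vanishing gradient, and then steering the difference rule together with the convex sum rule so that the denominator contributes the convex subdifferential $\partial g$ (rather than a Fr\'echet object) while the two $f$-contributions collapse into the single block $c_\star f$. Everything else is bookkeeping with the calculus rules collected in Section~\ref{subsection: Generalized subdifferential}.
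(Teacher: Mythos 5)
Your proposal is correct and takes essentially the same route as the paper: your remainder $r$ is, up to an additive constant, exactly the paper's auxiliary function $\varphi_2$, and both arguments combine its vanishing Fr\'echet derivative at $x^{\star}$ (from local Lipschitz continuity of $f$ and $g$) with the exact sum rule for a differentiable summand, the convex sum rule for $c_{\star}^2 f(x^{\star})g+h_2$, and the Mordukhovich difference rule, before closing the loop through Proposition~\ref{Prop: relation between the stationary points of F and wF} in both directions. I see no gaps.
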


\begin{proof}
	Note that $\varphi_{2}:=2 c_{\star} f-c_{\star}^{2} f g-c_{\star} f+c_{\star}^{2} f(x^{\star}) g$ is differentiable at $x^{\star}$. Precisely, we have $\nabla \varphi_{2}(x^{\star})=0$, since
	\begin{equation*}
		\lim _{x \to x^{\star}} \frac{|\varphi_{2}(x)-\varphi_{2}(x^{\star})|}{\|x-x^{\star}\|_{2}}=\lim_{x \to x^{\star}} \frac{c_{\star}^{2}|f(x)-f(x^{\star})||g(x^{\star})-g(x)|}{\|x-x^{\star}\|_{2}}=0,
	\end{equation*}
	where the last equality holds, owing to $x^{\star}\in\dom(\widehat{\partial} F)\subseteq\Omega$,	
	and the local Lipschitz continuity of $f$ and $g$ on $\Omega$. Hence, for  $c_{\star} =1 / g(x^{\star})$, it holds that
	\begin{align}\label{eq: z08262052}
		& \widehat{\partial}_{x}\wF(x^{\star}, c_{\star})\overset{\StatementNumUpperCase{1}}{=}\widehat{\partial}(2 c_{\star} f-c_{\star}^{2} f g+h+\iota_{\mathcal{C}})(x^{\star}) \\
		& \overset{\StatementNumUpperCase{2}}{=} \widehat{\partial}(c_{\star} f-c_{\star}^{2} f(x^{\star}) g-h_{2}+\iota_{\mathcal{C}})(x^{\star})+\nabla \varphi_{2}(x^{\star})+\nabla h_{1}(x^{\star}) \notag \\
		& \overset{\StatementNumUpperCase{3}}{\subseteq} \widehat{\partial}(c_{\star} f+\iota_{\mathcal{C}})\left(x^{\star}\right)-c_{\star}^{2} f(x^{\star}) \partial g\left(x^{\star}\right)+\nabla h_{1}\left(x^{\star}\right)-\partial h_{2}\left(x^{\star}\right),\notag
	\end{align}
	where $\StatementNumUpperCase{1}$ follows from the definition of $\widetilde{F}$ (see \eqref{problem: fractional min-max}) and the open $\Omega$, $\StatementNumUpperCase{2}$ holds because $\varphi_{2}$ and $h_{1}$ are differentiable at $x^{\star}$, and $\StatementNumUpperCase{3}$ holds thanks to \cite[Theorem 3.1]{Mordukhovich:2006Frechet_subdifferential}, $c_{\star}^{2} f(x^{\star}) \geq 0$, and the real-valued convexity of the functions $g$ and $h_2$. Especially, $\StatementNumUpperCase{3}$ in \eqref{eq: z08262052} becomes an equality when $g$ and $h_2$ are differentiable at $x^{\star}$. Invoking \eqref{eq: z08262052} and Proposition \ref{Prop: relation between the stationary points of F and wF}, we then obtain the desired results.
	\hfill $\square$ \end{proof}

In general, it is impractical to find an $x^{\star}$ satisfying $0\in \widehat{\partial} F(x^{\star})$. Inspired by Corollary \ref{Corollary: critical point}, we introduce a weaker definition of a critical point of $F$ as follows, which is more amenable to algorithmic analysis and computation.

\begin{definition}\label{Def: lifted stationary point}
	We say that $x^{\star}\in\Omega $ is a critical point of $F$, if \eqref{eq: def lifted stationary point} holds with $c_{\star}= 1 / g(x^{\star})$.
\end{definition}

Using the proximity operator of $f_{\mathcal{C}} := f + \iota_{\mathcal{C}}$, one can derive that $\bar{x}$ is a critical point of $F$ whenever $\bar{x} \in \prox_{ \alpha \bar{c} f_{\mathcal{C}} }( \bar{x} - \alpha (\nabla h_1(\bar{x})-\bar{z} - \bar{c}^2 f(\bar{x}) \bar{y}))$, or, equivalently $$\dist\left(\bar{x},~ \prox_{ \alpha \bar{c} f_{\mathcal{C}} }\left( \bar{x} - \alpha (\nabla h_1(\bar{x})-\bar{z} - \bar{c}^2 f(\bar{x}) \bar{y})\right)\right) = 0$$ for some $\alpha >0$, $\bar{c} = 1/g(\bar{x})$, and $(\bar{y},\bar{z})\in \partial g(\bar{x})\times \partial h_2(\bar{x})$. Thus, for a given $\bar{x}$, the above distance naturally quantifies the violation of the critical point condition \eqref{eq: def lifted stationary point}. Due to this, below we define an $\epsilon$-critical point of $F$ as a point $\bar{x}$ whose violation is at most $\epsilon$.

\begin{definition}\label{Def: eps-critical point}
	Given $\epsilon\geq 0$, we say that the $\bar{x}\in\Omega$ is an $\epsilon$-critical point of $F$, if there exist $\alpha>0$ and $(\bar{y},\bar{z})\in \partial g(\bar{x})\times \partial h_2(\bar{x})$ such that
	\begin{equation}\label{eq: def eps-critical point}
		\dist\left(\bar{x},~ \prox_{ \alpha \bar{c}  f_{\mathcal{C}} }\left(
		\bar{x} - \alpha (\nabla h_1(\bar{x})-\bar{z} - \bar{c}^2 f(\bar{x}) \bar{y})\right)\right)
		\leq \epsilon
	\end{equation}
	holds with $\bar{c} = 1/g(\bar{x})$ and $f_{\mathcal{C}} := f+\iota_{\mathcal{C}}$.
\end{definition}

In the next section, inspired by the min–max reformulation, we design an alternating maximization proximal descent algorithm (AMPDA) and, under a mild assumption, establish its subsequential convergence to a critical point of $F$ as well as its iteration complexity for finding an $\epsilon$-critical point.


\section{An alternating maximization proximal descent algorithm} \label{section: AMDPA}

In this section, we propose an alternating maximization proximal descent algorithm (AMPDA) for solving the min-max reformulation \eqref{problem: fractional min-max}, and establish its subsequential convergence to a critical point as well as its iteration complexity for returning an $\epsilon$-critical point of $F$ under a mild assumption.

\subsection{AMPDA and its interpretations} 
In this subsection, we present AMPDA and then make some interpretations on this algorithm.
To this end, we first introduce an auxiliary  $Q:\mathbb{R}^n\times\mathbb{R}^n\times\mathbb{R}^n\times\mathbb{R}\to(-\infty, +\infty]$, which is defined by 
\begin{equation}\label{eq: def Q}
	Q(x,y,z,c) = \iota_{\mathcal{C}}(x) +  2c f(x) + c^2 f(x)  (g^*(y)-\innerP{x}{y}) + h_1(x)+h_2^*(z)-\innerP{x}{z}
\end{equation}
if $(y,z)\in\dom(g^*)\times\dom(h_2^*)$, and $Q(x,y,z,c)=+\infty$ otherwise. Let $F$ and $\wF$ be defined by \eqref{eq: def F} and \eqref{problem: fractional min-max}, respectively. Then, for all $x\in\Omega\cap\mathcal{C}$, $(y,z)\in \dom(g^*)\times \dom(h_2^*)$ and $c_x = 1/g(x)$, it holds that
\begin{equation}\label{eq: relation of F and Q}
	F(x) \overset{\StatementNumUpperCase{1}}{=} \wF(x,c_x) \overset{\StatementNumUpperCase{2}}{\leq} Q(x,y,z,c_x),
\end{equation}
where $\StatementNumUpperCase{1}$ follows from \eqref{eq: F = max_c wF}, $\StatementNumUpperCase{2}$ follows from the \textit{Fenchel-Young Inequality} and the non-negativity of $c_x^2f(x)$, and $\StatementNumUpperCase{2}$ becomes an equality when $(y,z)\in\partial g(x)\times\partial h_2(x)$. Now we formally present our AMPDA in Algorithm \ref{alg:proposed}.
Before proceeding, we provide some interpretations and observations on AMPDA.
This algorithm can be interpreted as an alternating iteration method for solving the min-max reformulation \eqref{problem: fractional min-max} as illustrated below.

\begin{algorithm}
	\caption{AMPDA for solving problem \eqref{problem:root}.}
	\label{alg:proposed}
	\renewcommand{\arraystretch}{1.2}
	\begin{tabular}{ll}
		\textbf{Step 0.} & Input $x^0\in\dom(F)$, 
		$0<\underline{\alpha} \leq  \overline{\alpha}$, $\sigma>0$, $0<\gamma<1$, and set  $k \leftarrow 0$. \\[5pt]
		\textbf{Step 1.} & Compute $c_k = 1 / g(x^k)$.\\
		& Choose $(y^k,z^k) \in \partial g(x^k)\times \partial h_2(x^k)$.\\
		& Set $\alpha := \widetilde{\alpha}_k \in [\underline{\alpha},\overline{\alpha}]$.\\ [5pt]
		\textbf{Step 2.} & 
		Compute 
	\end{tabular}
	\begin{equation}\label{eq: proximal-step of MM-PGSA}
		\widehat{x}^k \in \prox_{ \alpha c_k  f_{\mathcal{C}} }\left(
		x^k - \alpha (\nabla h_1(x^k)-z^k - c_k^2 f(x^k) y^k)\right).
	\end{equation}
	\begin{tabular}{ll}
		\textbf{~~~~~~~~~} & If $\widehat{x}^k\in\Omega$ and satisfies
	\end{tabular}%
	\begin{equation}\label{Linesearch: Rule strong}
		Q(\widehat{x}^k,y^k,z^k,1/g(\widehat{x}^k)) + \frac{\sigma}{2}\|\widehat{x}^k-x^k\|_2^2 \leq F(x^k);
	\end{equation}
	\begin{tabular}{ll}
		& Then, set $x^{k+1} = \widehat{x}^k$ and go to \textbf{Step 3};\\
		& Else, set $\alpha := \alpha \gamma$ and repeat \textbf{Step 2};\\
		\textbf{Step 3.} & Record $\alpha_k := \alpha$, set $k\leftarrow k+1$, and go to \textbf{Step 1}. 
	\end{tabular}
\end{algorithm}

First, for fixed $x \in \Omega \cap \mathcal{C}$, $\wF(x,\cdot)$ is either a concave one-dimensional quadratic function or a constant function, which always admits a global maximizer at $c=1/g(x)$. Hence, given $x^k\in\Omega\cap\mathcal{C}$, AMPDA sets $c_k = 1/g(x^k)$ to maximize $\wF(x^k,\cdot)$ in Step 1. 

Second, in Step 2 of the $k$-th iteration, AMPDA solves a proximal subproblem with respect to $f_{\mathcal{C}}$, which also requires to evaluate the gradient of $h_1$, the subgradients of $g$ and $h_{2}$ at $x^{k}$. As we will see below, this can be viewed as applying one step of the majorization-minimization scheme to the minimization problem $\min\{\wF(x,c_k):x\in\mathbb{R}^n  \}$. Since $f$ and $\nabla h_{1}$ are locally Lipschitz continuous on $\Omega$, there exists $\delta_{k}>0$ such that $f$ and $\nabla h_{1}$ are Lipschitz continuous on the neighborhood $\mathcal{B}(x^{k}, \delta_{k})\subseteq \Omega$. Let $L_{f, k}>0$  and $L_{\nabla h_1,k}>0$ be the associated Lipschitz moduli of $f$ and $\nabla h_1$ respectively. In the majorization stage, we introduce the surrogate function $\mathcal{M}(\cdot \mid x^{k}, c_{k})$ of $\wF(\cdot,c_k)$ over $\mathcal{B}(x^{k}, \delta_{k})$, which is in the form of 
\begin{multline}\label{eq: majorization surrogate of tilde_F}
	\mathcal{M}(x \mid x^{k}, c_{k})=(2 c_{k}-c_{k}^{2} g(x^{k})) f(x)+\left\langle\nabla h_{1}(x^{k})-c_{k}^{2} f(x^{k}) y^{k}-z^{k}, x-x^{k}\right\rangle \\
	+\left(c_{k}^{2} L_{f, k}\|y^{k}\|_{2}+\frac{L_{\nabla h_{1}, k}}{2}\right)\|x-x^{k}\|_{2}^{2}+h(x^{k})+\iota_{\mathcal{C}}(x).
\end{multline}
Clearly, $\mathcal{M}(x^{k} \mid x^{k}, c_{k})=\wF(x^k, c_{k})$. Moreover, for all  $x \in \mathcal{B}(x^{k}, \delta_{k})$, we have 
\begin{align*}
	& \wF(x, c_{k})\overset{\StatementNumUpperCase{1}}{=}\iota_{\mathcal{C}}(x)+f(x)\left(2 c_{k}-c_{k}^{2} g(x)\right)+h(x) \\[3pt]
	& \overset{\StatementNumUpperCase{2}}{\leq} \iota_{\mathcal{C}}(x)+f(x)\left(2 c_{k}-c_{k}^{2}\left(g(x^{k})+\left\langle y^{k}, x-x^{k}\right\rangle\right)\right)+h(x) \\[3pt]
	& =\iota_{\mathcal{C}}(x)+\left(2 c_{k}-c_{k}^{2} g(x^{k})\right) f(x)
	-\left\langle c_{k}^{2} f(x^k) y^{k}, x-x^{k}\right\rangle +h(x) \\[3pt]
	&\quad +\left\langle c_{k}^{2}  y^{k}, x-x^{k}\right\rangle (f(x^k)-f(x)) \\[3pt]
	& \overset{\StatementNumUpperCase{3}}{\leq} \iota_{\mathcal{C}}(x)+\left(2 c_{k}-c_{k}^{2} g(x^{k})\right) f(x)-\left\langle c_{k}^{2} f(x^{k}) y^{k}, x-x^{k}\right\rangle+h(x) \\[3pt]
	&\qquad +c_{k}^{2} L_{f, k}\|y^{k}\|_2\|x-x^{k}\|_{2}^{2} \overset{\StatementNumUpperCase{4}}{\leq} \mathcal{M}(x \mid x^{k}, c_{k}),
\end{align*}
where $\StatementNumUpperCase{1}$ follows from the definition of $\widetilde{F}$ (see \eqref{problem: fractional min-max}); $\StatementNumUpperCase{2}$ follows from $f \geq 0$ and $y^{k} \in \partial g(x^{k})$; $\StatementNumUpperCase{3}$ follows from the Lipschitz continuity of $f$ and the \textit{Cauchy-Schwarz Inequality}, and $\StatementNumUpperCase{4}$ follows from the inequality $h(x) \leq h(x^{k})+\left\langle\nabla h_{1}(x^{k})-z^{k}, x-x^{k}\right\rangle+\frac{L_{ \nabla h_{1},k}}{2}\|x-x^{k}\|_{2}^{2}$.  This inequality holds because the convexity of $h_2$ implies $h_2(x) \geq h_2(x^k) + \left\langle z^{k}, x-x^{k}\right\rangle$, while the \textit{Descent Lemma} applied to the locally Lipschitz differentiable function $h_{1}$ yields $h_{1}(x) \leq h_1(x^k) + \left\langle\nabla h_{1}(x^{k}), x-x^{k}\right\rangle+\frac{L_{ \nabla h_{1},k}}{2}\|x-x^{k}\|_{2}^{2}$.
Next, in the minimization stage one naturally updates $x$ by solving $\min\{\mathcal{M}(x \mid x^{k}, c_{k}):x\in\mathbb{R}^n \}$. However, as the Lipschitz moduli $L_{f, k}$  and $L_{\nabla h_1,k}$ are generally difficult to estimate, we use $1/ (2 \alpha)$ with $\alpha>0$ to replace the unknown term $c_{k}^{2} L_{f, k}\|y^{k}\|_2+\frac{L_{ \nabla h_{1},k}}{2}$ in $\mathcal{M}(x \mid x^{k},c_k)$, 
where $\alpha$ is determined by a backtracking line search procedure. Invoking this, \eqref{eq: majorization surrogate of tilde_F} and the fact that $c_k=1/g(x^k)$, we actually in the minimization stage construct $x^{k+1}$ via
\begin{multline} \label{eq: proximal-step of MM-PGSA 2}
	x^{k+1} \in \arg\min \Big\{c_k f(x)+\left \langle\nabla h_1(x^{k})- c_k^2 f(x^{k}) y^{k}-z^{k}, x-x^{k}\right\rangle\\
	+\frac{1}{2 \alpha}\|x-x^{k}\|_{2}^{2}:x\in\mathcal{C} \Big\}.
\end{multline}
It is obvious that \eqref{eq: proximal-step of MM-PGSA 2} is exactly equivalent to \eqref{eq: proximal-step of MM-PGSA} in Step 2 of AMPDA.

Third, the line search procedure is stopped once $\widehat{x}^{k} \in \Omega$ and \eqref{Linesearch: Rule strong} is satisfied. Using the fact that $(y^k,z^k)\in \partial g(x^k)\times \partial h_2(x^k)$ and the Fenchel-Young equality, it is easy to verify that
\begin{multline}
	Q(\widehat{x}^k,y^k,z^k,1/g(\widehat{x}^k)) = 
	\frac{f(\widehat{x}^k)}{g^2(\widehat{x}^k)} (2g(\widehat{x}^k) - g(x^k)) + h_1(\widehat{x}^k)-h_2(x^k)\\
	+\innerP{x^k-\widehat{x}^k}{z^k+\frac{f(\widehat{x}^k)}{g^2(\widehat{x}^k)}y^k}.
\end{multline}
Hence, in practice we do not need to evaluate the conjugate terms $g^*$ and $h^*_2$ in the line-search procedure. Also, invoking $(y^k,z^k)\in\dom(g^*)\times\dom(h_2^*) $ and \eqref{eq: relation of F and Q}, we immediately see that the condition \eqref{Linesearch: Rule strong} is generally stronger than the sufficient descent condition of $F$ at $\widehat{x}^k$, that is,
\begin{equation}\label{eq: z10091452}
	F(\widehat{x}^{k})+\frac{\sigma}{2}\|\widehat{x}^{k}-x^{k}\|_{2}^{2} \leq F(x^{k}).
\end{equation}
Although the subsequential convergence can be still guaranteed if the terminating condition \eqref{Linesearch: Rule strong} of the line search scheme in AMPDA is replaced by \eqref{eq: z10091452}, it is difficult to show the convergence of the full solution sequence generated by this altered algorithm under the KL assumption on $F$. This is because it is intractable to verify that $F$ satisfies the relative error condition (condition \StatementNum{2} in Proposition \ref{ppsition:3.4-3}) at the solution sequence without additional smooth assumption on $g$. In contrast, we can use $Q$ defined in \eqref{eq: def Q} as an auxiliary function, and prove that it generally satisfies conditions \StatementNum{1}-\StatementNum{3} of Proposition \ref{ppsition:3.4-3} at the sequence $\{(x^{k+1}, y^{k}, z^{k}, c_{k+1}) : k \in \mathbb{N}\}$ generated by AMPDA equipped with the line-search condition \eqref{Linesearch: Rule strong}, as shown in Section \ref{section: Global convergence analysis}. Due to these and invoking Proposition \ref{ppsition:3.4-3}, we immediately obtain the sequential convergence of AMPDA under the KL assumption on $Q$.

\subsection{Convergence analysis for the AMPDA}

In this subsection, we conduct convergence analysis for AMPDA. To this end, we first introduce the following basic assumption concerning its initial point.

\begin{assumption}\label{assumption: X is compact}
	The level set $\mathcal{X}:= \{ x\in\dom(F): F(x)\leq F(x^0) \}$ is compact.
\end{assumption}

Assumption \ref{assumption: X is compact} requires that the set $\mathcal{X}$ is bounded and closed. The boundedness of the level set associated with the extended objective is a very standard assumption in nonconvex optimization. In particular, for problem \eqref{problem:root}, the boundedness of $\mathcal{X}$ is automatically guaranteed if the constraint set $\mathcal{C}$ is bounded or the function $h$ is coercive (i.e., $\lim_{\|x\|_2 \to \infty} h(x) = +\infty$). However, the closedness of $\mathcal{X}$ often requires a more careful discussion since $F$ is probably not closed. In view of \cite[Proposition 4.4]{NaZhang-QiaLi:2022SIAM-OPT}, and using the closedness of $h$ and $\mathcal{C}$, the objective $F$ is a closed function  whenever $f$ and $g$ do not attain zero simultaneously. Consequently, any level set of $F$, including $\mathcal{X}$, is closed. In the case where $f$ and $g$ can attain zero simultaneously, we can still guarantee the closedness of $\mathcal{X}$ via appropriately selecting $x^0$, as shown in the following proposition.

\begin{proposition}\label{Prop: sufficent for X closed}
	Suppose that  $\mathcal{O}:=\{ x\in\mathbb{R}^n: f(x)=g(x)=0 \} \neq \emptyset$. Then, $\mathcal{X}$ is closed, if $x^0\in\dom(F)$ satisfies
	\begin{equation}\label{eq: sufficient condition for the closedness of X}
		F(x^0) < \inf\left\{ \Liminf_{z\to x} F(z): x\in\mathcal{O} \right\}.
	\end{equation}  
\end{proposition}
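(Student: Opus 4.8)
The plan is to prove closedness of $\mathcal{X}$ by a direct sequential argument: take any sequence $\{x^j\}\subseteq\mathcal{X}$ with $x^j\to\bar{x}$ and show $\bar{x}\in\mathcal{X}$, i.e. $\bar{x}\in\dom(F)$ with $F(\bar{x})\le F(x^0)$. Since each $x^j\in\dom(F)=\Omega\cap\mathcal{C}\subseteq\mathcal{C}$ and $\mathcal{C}$ is closed, $\bar{x}\in\mathcal{C}$ is immediate, so the whole issue reduces to controlling $F$ near $\bar{x}$. The only source of trouble is the ratio $f/g$ at points where $g$ vanishes, so I would split the analysis according to whether $g(\bar{x})>0$, or $g(\bar{x})=0$ with $f(\bar{x})>0$, or $\bar{x}\in\mathcal{O}$ (the case $f(\bar{x})=g(\bar{x})=0$).

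First, when $\bar{x}\notin\mathcal{O}$ I would argue that $F$ is lower semicontinuous at $\bar{x}$, so that $F(\bar{x})\le\liminf_j F(x^j)\le F(x^0)$ gives $\bar{x}\in\mathcal{X}$ at once. If $g(\bar{x})>0$, then $\bar{x}\in\Omega\cap\mathcal{C}=\dom(F)$, and on $\Omega$ the function $f/g$ is lower semicontinuous (a nonnegative lsc numerator over a continuous strictly positive denominator) while $h$ is lsc, so $F$ is lsc at $\bar{x}$. If instead $g(\bar{x})=0$ but $f(\bar{x})>0$, then lower semicontinuity of $f$ and continuity of $g$ give $f(x^j)\ge f(\bar{x})/2>0$ eventually and $g(x^j)\to 0^{+}$, forcing $f(x^j)/g(x^j)\to+\infty$; since $h$ is real-valued and lsc it is bounded below along the sequence, whence $F(x^j)\to+\infty$, contradicting $F(x^j)\le F(x^0)$. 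Equivalently, this entire case is subsumed by \cite[Proposition 4.4]{NaZhang-QiaLi:2022SIAM-OPT}, which already guarantees closedness of $F$ wherever $f$ and $g$ do not vanish simultaneously, i.e. off $\mathcal{O}$.

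The crux, and where the hypothesis \eqref{eq: sufficient condition for the closedness of X} enters, is the remaining case $\bar{x}\in\mathcal{O}$. Here $g(\bar{x})=0$ gives $\bar{x}\notin\Omega$, hence $\bar{x}\notin\dom(F)$ and in particular $x^j\neq\bar{x}$ for every $j$. Since $x^j\to\bar{x}$ with $x^j\neq\bar{x}$, the definition of the limit inferior of a function yields $\liminf_j F(x^j)\ge\Liminf_{z\to\bar{x}}F(z)\ge\inf\{\Liminf_{z\to x}F(z):x\in\mathcal{O}\}$, and the assumed strict inequality \eqref{eq: sufficient condition for the closedness of X} makes this last quantity strictly larger than $F(x^0)$. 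This contradicts $F(x^j)\le F(x^0)$, so no sequence in $\mathcal{X}$ can accumulate at a point of $\mathcal{O}$. Combining the three cases shows that every limit point $\bar{x}$ lies in $\mathcal{X}$.

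I expect the main obstacle to be the careful treatment of the points where $g$ vanishes, since this is exactly where $F$ may fail to be closed and the ratio is ill-defined. The content of the proposition is that the single set $\mathcal{O}$, where $f$ and $g$ vanish together, is the only genuine obstruction to closedness, and that the level condition on the initial point $x^0$ is precisely calibrated to exclude $\mathcal{O}$ from being approached within the sublevel set $\{F\le F(x^0)\}$. Verifying the liminf inequality with the strict margin, and confirming lower semicontinuity at the $g=0$, $f>0$ points (where $F$ jumps to $+\infty$), are the two steps I would expect to require the most care.
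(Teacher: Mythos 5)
Your proof is correct, and it reaches the conclusion by a somewhat different route than the paper. You split the limit point $\bar{x}$ into three cases ($g(\bar{x})>0$; $g(\bar{x})=0$ with $f(\bar{x})>0$; $\bar{x}\in\mathcal{O}$) and handle the first by lower semicontinuity of $f/g+h$ on $\Omega$, the second by showing the ratio blows up (so such limit points cannot occur within a sublevel set), and the third exactly as the paper does, via the liminf inequality and the strict margin in \eqref{eq: sufficient condition for the closedness of X}. The paper instead clears the denominator: from $f(x^k)/g(x^k)+h(x^k)\leq F(x^0)$ it derives $f(x^k)+h(x^k)g(x^k)\leq F(x^0)g(x^k)$, bounds $\sup_k|h(x^k)|$, and passes to the limit to obtain $f(\bar{x})+h(\bar{x})g(\bar{x})\leq F(x^0)g(\bar{x})$. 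This single inequality does double duty: when $g(\bar{x})>0$ one divides back to get $\bar{x}\in\mathcal{X}$, and when $g(\bar{x})=0$ the nonnegativity of $f$ forces $f(\bar{x})=0$, so the limit point automatically lies in $\mathcal{O}$ and your second case never needs separate treatment. Your version is more elementary and makes explicit why each type of boundary point is harmless (in particular that $F\to+\infty$ near points where $g$ vanishes but $f$ does not, which is the content of the cited \cite[Proposition 4.4]{NaZhang-QiaLi:2022SIAM-OPT}); the paper's version is more compact because the product inequality absorbs the case analysis. Both arguments rely on the same two facts you identified: $h$ is bounded below along the convergent sequence by closedness, and the hypothesis on $x^0$ is calibrated precisely to forbid sublevel sequences from accumulating on $\mathcal{O}$.
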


\begin{proof}
	We shall show that $\mathcal{X}$ is closed by verifying that each accumulation point of $\mathcal{X}$ belongs to $\mathcal{X}$ when \eqref{eq: sufficient condition for the closedness of X} is satisfied. Let $x^{\star}$ be an accumulation point of $\mathcal{X}$ and the sequence $\{x^k:k\in\mathbb{N} \} \subseteq \mathcal{X}$ converges to  $x^{\star}$. For any $k\in\mathbb{N}$, we derive from $x^k\in\mathcal{X}$ that $h(x^k)+f(x^k)/g(x^k) \leq F(x^0)$. This together with $g(x^k)>0$ yields that, for any $k\in\mathbb{N}$,
	\begin{multline}\label{eq: z12241425}
		f(x^{k})+h(x^k) g(x^{\star}) \leq F(x^0)g(x^{k}) + h(x^k)(g(x^{\star})-g(x^{k}))\\
		\leq F(x^0)g(x^{k}) + \sup\{|h(x^k)|:k\in\mathbb{N} \}|g(x^{\star})-g(x^{k})|. 
	\end{multline}
	Firstly, we have $\inf\{h(x^k):k\in\mathbb{N} \}>-\infty$ from $x^k\to x^{\star}$ and the closedness of $h$. Secondly, $\sup\{h(x^k):k\in\mathbb{N} \} = \sup\{F(x^k)-f(x^k)/g(x^k):k\in\mathbb{N} \}\leq F(x^0)$ follows from $x^k\in\mathcal{X}$ for each $k\in\mathbb{N}$, and the fact that $F\leq F(x^0)$ and $f/g\geq 0$ holds on $\mathcal{X}$. In view of these two reasons, we deduce that $\sup\{|h(x^k)|:k\in\mathbb{N} \}<+\infty$. Therefore, by passing to the limit on the both sides of \eqref{eq: z12241425} with $k\to\infty$, we derive that
	\begin{equation}\label{eq: z08121853}
		f(x^{\star})+h(x^{\star}) g(x^{\star}) \leq F(x^0)g(x^{\star}),
	\end{equation}
	owing to the closed $f$, the closed $g(x^{\star})h$, and the continuous $g$. From \eqref{eq: z08121853} and the closedness of $\mathcal{C}$, one can derive $x^{\star}\in\mathcal{X}$ if $g(x^{\star}) >0$. In fact, $g(x^{\star}) >0$ is guaranteed when $x^0\in\dom(F)$ satisfies \eqref{eq: sufficient condition for the closedness of X}. Assume on the contrary that \eqref{eq: sufficient condition for the closedness of X} holds but $g(x^{\star}) = 0$, which forces $f(x^{\star}) = 0$ according to \eqref{eq: z08121853} and $f\geq 0$. In other words, $g(x^{\star}) = 0$ leads to $x^{\star} \in \mathcal{O}$. This, along with \eqref{eq: sufficient condition for the closedness of X}, implies that $F(x^0) <\Liminf_{k\to \infty} F(x^k)$, which contradicts $\{x^k:k\in\mathbb{N} \} \subseteq \mathcal{X}$. 
	\hfill $\square$ \end{proof}

%

We remark that for both problems \eqref{problem: L1dL2} and \eqref{problem: L1dSK}, the zero vector is the unique point at which both $f$ and $g$ vanish. In view of Proposition \ref{Prop: sufficent for X closed} and these two problems, we deduce that if $x^{0} \in \mathbb{R}^{n}$ satisfies
\begin{equation}\label{eq: requirement of x0}
	F(x^{0})  < \Liminf_{z \to 0} F(z)=1+\frac{\lambda}{2}\|b-\mathcal{T}_{\mu}(b)\|_{2}^{2},
\end{equation}
then the level set $\mathcal{X}$ is closed. This together with the boundedness of the constraint set $\{x \in \mathbb{R}^{n}: \underline{x} \leq x \leq \overline{x}\}$ shows that Assumption \ref{assumption: X is compact} is fulfilled for problems \eqref{problem: L1dL2} and \eqref{problem: L1dSK} if the initial point $x^{0}$ is properly chosen. Moreover, as we shall demonstrate in Section \ref{section: Numerical experiments}, such an initial point $x^0$ satisfying \eqref{eq: requirement of x0} can be easily constructed in practice. At this stage, however, we concentrate on the convergence analysis of the proposed algorithm.

With the aid of Assumption \ref{assumption: X is compact}, we now establish the following technical lemma, which will be frequently used in our subsequent analysis.

\begin{lemma}\label{Lemma: Lipschitz continuous}
	Suppose that Assumption \ref{assumption: X is compact} holds. Then, we have $m_g := \inf \{g(x):x\in\mathcal{X}  \}>0$. Moreover, there exists $\Delta>0$ such that the following statements hold with the set  $\mathcal{X}_{\Delta}$ defined by
	\begin{equation*}\label{eq: def the set X_Delta}
		\mathcal{X}_{\Delta}:=\{ x\in\mathbb{R}^n: \text{$\|x-\bar{x}\|_2\leq \Delta$ for some $\bar{x}\in\mathcal{X}$} \}.
	\end{equation*}
	\begin{enumerate}[\upshape(\romannumeral 1 )]
		\item For any $x\in\mathcal{X}_{\Delta}$, there holds $g(x)\geq m_g/2$.
		\item $f$, $g$, $1/g$ and $\nabla h_1$ are globally Lipschitz continuous on $\mathcal{X}_{\Delta}$.
		\item The quantities defined below are finite.
		\begin{align*}
			&M_{\partial g} := \sup\{\|y\|_2:y\in\partial g(x) \text{~for some $x\in\mathcal{X}$}  \},\\
			&M_{f/ g^2} := \sup\{ f(x)/g^2(x):  x\in\mathcal{X}_{\Delta}  \}, ~~~~~ M_g := \sup\{ g(x):  x\in\mathcal{X}  \}.
		\end{align*}
	\end{enumerate}
\end{lemma}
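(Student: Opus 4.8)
The plan is to exploit the compactness of $\mathcal{X}$ together with the inclusion $\mathcal{X}\subseteq\dom(F)\subseteq\Omega\cap\mathcal{C}$ to first secure a uniform positive lower bound on $g$ over $\mathcal{X}$, then propagate this bound to the closed neighborhood $\mathcal{X}_{\Delta}$, and finally read off the Lipschitz and boundedness claims. I would begin with $m_g>0$. Since $g$ is convex on $\mathbb{R}^n$ it is continuous, and $\mathcal{X}$ is compact, so the infimum defining $m_g$ is attained at some $\bar{x}\in\mathcal{X}$. As $\mathcal{X}\subseteq\Omega$ and $g\geq 0$ with $g\neq 0$ on $\Omega$, we obtain $m_g=g(\bar{x})>0$.

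Next I would choose $\Delta$ so as to prove \StatementNum{1}. Because $g$ is convex, it is Lipschitz on every bounded set; in particular, fixing the compact set $\{x\in\mathbb{R}^n:\dist(x,\mathcal{X})\leq 1\}$ and letting $L_g$ be a Lipschitz modulus of $g$ there, I would set $\Delta:=\min\{1,\,m_g/(2L_g)\}$ (with $\Delta=1$ when $g$ is constant). For any $x\in\mathcal{X}_{\Delta}$, choosing $\bar{x}\in\mathcal{X}$ with $\|x-\bar{x}\|_2\leq\Delta$ gives $g(x)\geq g(\bar{x})-L_g\Delta\geq m_g-m_g/2=m_g/2$, which is \StatementNum{1}. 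This in turn yields $\mathcal{X}_{\Delta}\subseteq\Omega$, and $\mathcal{X}_{\Delta}$ is compact because $\mathcal{X}$ is.

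The main obstacle lies in \StatementNum{2}, specifically in upgrading the \emph{local} Lipschitz continuity of $f$ and $\nabla h_1$ on the open set $\Omega$ to \emph{global} Lipschitz continuity on the compact set $\mathcal{X}_{\Delta}\subseteq\Omega$. My plan is a covering argument: cover $\mathcal{X}_{\Delta}$ by balls $\mathcal{B}(x,\delta_x)$ on which the function in question is Lipschitz, extract a finite subcover by compactness, and use a Lebesgue-number type estimate. A uniform radius $\delta>0$ then handles pairs $u,v\in\mathcal{X}_{\Delta}$ with $\|u-v\|_2<\delta$, since they lie in a common ball, while continuity on the compact set $\mathcal{X}_{\Delta}$ bounds the function by some $M$ and thereby controls pairs with $\|u-v\|_2\geq\delta$ through $|f(u)-f(v)|\leq 2M\leq(2M/\delta)\|u-v\|_2$; combining the two regimes produces a single global modulus. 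For $g$ the claim is immediate from convexity (Lipschitz on the bounded set $\mathcal{X}_{\Delta}$), and for $1/g$ I would write $|1/g(u)-1/g(v)|=|g(v)-g(u)|/(g(u)g(v))$, bounding the denominator below by $(m_g/2)^2$ via \StatementNum{1} and the numerator by the Lipschitz modulus of $g$.

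Finally, \StatementNum{3} follows quickly from what precedes. The bound $M_g<+\infty$ and the boundedness of $f$ on $\mathcal{X}_{\Delta}$ are consequences of continuity on compact sets; then $M_{f/g^2}<+\infty$ combines this boundedness of $f$ with $g^2\geq m_g^2/4$ from \StatementNum{1}; and $M_{\partial g}<+\infty$ follows from the property of convex functions recalled in Section \ref{subsection: Generalized subdifferential}, namely that the union $\cup_{x\in\mathcal{X}}\partial g(x)$ of subdifferentials over the bounded set $\mathcal{X}$ is nonempty and bounded.
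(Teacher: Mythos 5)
Your proof is correct and follows essentially the same route as the paper: positivity of the continuous $g$ on the compact $\mathcal{X}$ gives $m_g>0$ and, after shrinking to a suitable $\Delta$, statement (i); the local-to-global Lipschitz upgrade on the compact $\mathcal{X}_{\Delta}\subseteq\Omega$ gives (ii); and (iii) follows from continuity on compact sets together with the boundedness of $\cup_{x\in\mathcal{X}}\partial g(x)$. The only difference is cosmetic: you spell out the explicit choice of $\Delta$ and prove the covering/Lebesgue-number argument that the paper delegates to a cited exercise in Clarke et al.
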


\begin{proof}
	Since the continuous $g$ is strictly positive on the compact $\mathcal{X}$, we deduce that $m_g>0$ and the statement \StatementNum{1} holds. Then,  $f$, $g$, and $\nabla h_1$ are globally Lipschitz continuous on $\mathcal{X}_{\Delta}$, since they are locally Lipschitz continuous around each point in the compact $\mathcal{X}_{\Delta}\subseteq \Omega$ (\cite[Chapter 1, 7.5 Exercise (c)]{Clarke:2008BookNonsmooth_analysis_and_control_theory}). This, along with the statement \StatementNum{1}, implies that the statement \StatementNum{2} holds. Lastly, $M_{\partial g}$, $M_{f/ g^2}$, and $M_g$ are finite due to the boundedness of $\cup_{x\in\mathcal{X}}\partial g(x)$ (\cite[Proposition 5.4.2]{Bertsekas:Convexoptimizationtheory}), continuity of $f/g^2$ on the compact $\mathcal{X}_{\Delta}$, and the continuity of $g$ on the compact $\mathcal{X}$, respectively. This completes the proof.
	\hfill $\square$ \end{proof}

Next, we prove the well-definedness of AMPDA, which also implies its sufficient descent property.
\begin{proposition}\label{Lemma: well-defined of the Alg}
	Suppose Assumption \ref{assumption: X is compact} holds. Then AMPDA is well-defined. Specifically, there exists $\underline{\alpha}_{\sigma}>0$ such that, for all $k\in\mathbb{N}$, Step 2 of AMPDA terminates at some $\alpha_k\geq \AlphaStar$. Moreover, the sequence $\{x^k: k\in\mathbb{N} \}$ generated by AMPDA falls into the level set $\mathcal{X}\subseteq \Omega\cap\mathcal{C}$ and satisfies
	\begin{equation}\label{eq: z05201743}
		F(x^{k+1}) + \frac{\sigma}{2}\|x^{k+1}-x^k\|^2_2 \leq F(x^k),
	\end{equation} 
	where $\sigma>0$ is given in Step 0 of AMPDA.
\end{proposition}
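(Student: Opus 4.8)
The plan is to establish well-definedness in two logically separate pieces: first, that for each fixed iterate $x^k\in\mathcal{X}$ the line-search in Step 2 terminates after finitely many backtracking steps with a stepsize bounded uniformly below, and second, that the resulting iterate $x^{k+1}$ again lies in $\mathcal{X}$ so the induction can proceed. I would argue by induction on $k$, maintaining the invariant that $x^k\in\mathcal{X}\subseteq\Omega\cap\mathcal{C}$; the base case $x^0\in\mathcal{X}$ holds by the input requirement $x^0\in\dom(F)$ and $F(x^0)\le F(x^0)$.

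The heart of the matter is the termination of the line search. Fix $k$ and assume $x^k\in\mathcal{X}$. By Lemma \ref{Lemma: Lipschitz continuous}, there is a neighborhood $\mathcal{B}(x^k,\delta_k)\subseteq\mathcal{X}_\Delta\subseteq\Omega$ on which $f$ and $\nabla h_1$ are Lipschitz with moduli $L_{f,k},L_{\nabla h_1,k}$, and on which $g\ge m_g/2>0$. The majorization inequality $\wF(x,c_k)\le\mathcal{M}(x\mid x^k,c_k)$ derived in the preceding interpretation shows that $\widehat{x}^k$ defined by \eqref{eq: proximal-step of MM-PGSA} is a genuine descent step once $1/(2\alpha)\ge c_k^2 L_{f,k}\|y^k\|_2+L_{\nabla h_1,k}/2$, i.e. once $\alpha$ is small enough. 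Concretely, for such $\alpha$ the proximal update yields $\wF(\widehat{x}^k,c_k)\le\mathcal{M}(\widehat{x}^k\mid x^k,c_k)\le\mathcal{M}(x^k\mid x^k,c_k)-\tfrac{1}{4\alpha}\|\widehat{x}^k-x^k\|_2^2=\wF(x^k,c_k)-\tfrac{1}{4\alpha}\|\widehat{x}^k-x^k\|_2^2$, where the middle inequality uses that $\widehat{x}^k$ minimizes the surrogate with $1/(2\alpha)$ in place of the unknown modulus. Since $c_k=1/g(x^k)$ gives $\wF(x^k,c_k)=F(x^k)$ and $Q(\widehat{x}^k,y^k,z^k,1/g(\widehat{x}^k))=\wF(\widehat{x}^k,1/g(\widehat{x}^k))\le\wF(\widehat{x}^k,c_k)$ by \eqref{eq: relation of F and Q}, one obtains the line-search condition \eqref{Linesearch: Rule strong} provided $\tfrac{1}{4\alpha}\ge\tfrac{\sigma}{2}$ as well; combining, the test passes for all sufficiently small $\alpha$. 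Because backtracking multiplies $\alpha$ by $\gamma\in(0,1)$ starting from $\widetilde\alpha_k\le\overline\alpha$, finitely many reductions suffice, and the accepted $\alpha_k$ satisfies the uniform lower bound $\alpha_k\ge\min\{\underline\alpha,\underline\alpha_\sigma\}\gamma$ where $\underline\alpha_\sigma>0$ is the threshold below which both requirements on $\alpha$ hold, expressed via the moduli and the constants $m_g,M_{\partial g},M_{f/g^2}$ of Lemma \ref{Lemma: Lipschitz continuous}.

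The remaining subtlety, and the step I expect to be the main obstacle, is uniformity of this threshold across all $k$ and the verification that $\widehat{x}^k\in\Omega$ (so $1/g(\widehat{x}^k)$ is well-defined and \eqref{Linesearch: Rule strong} makes sense). The moduli $L_{f,k},L_{\nabla h_1,k}$ a priori depend on $k$ through the neighborhood $\mathcal{B}(x^k,\delta_k)$; the resolution is that once $x^k\in\mathcal{X}$, Lemma \ref{Lemma: Lipschitz continuous}\StatementNum{2} supplies \emph{global} Lipschitz constants of $f,g,\nabla h_1$ on the fixed enlarged set $\mathcal{X}_\Delta$, and $\|y^k\|_2\le M_{\partial g}$, $c_k^2 f(x^k)\le M_{f/g^2}$ are bounded uniformly, so a single $\underline\alpha_\sigma$ works for every $k$. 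For $\widehat{x}^k\in\Omega$, I would note that the descent estimate forces $\|\widehat{x}^k-x^k\|_2$ to be controlled and, combined with the choice of $\Delta$ and $m_g/2$ lower bound on $g$ over $\mathcal{X}_\Delta$, keeps $\widehat{x}^k$ inside $\mathcal{X}_\Delta$ where $g>0$; shrinking $\alpha$ further if necessary guarantees the step stays in this region. Finally, \eqref{Linesearch: Rule strong} together with \eqref{eq: relation of F and Q} gives $F(x^{k+1})+\tfrac{\sigma}{2}\|x^{k+1}-x^k\|_2^2\le Q(\widehat{x}^k,y^k,z^k,1/g(\widehat{x}^k))+\tfrac{\sigma}{2}\|\widehat{x}^k-x^k\|_2^2\le F(x^k)$, which is \eqref{eq: z05201743}; this descent inequality immediately yields $F(x^{k+1})\le F(x^k)\le F(x^0)$, hence $x^{k+1}\in\mathcal{X}$, closing the induction.
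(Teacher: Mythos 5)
Your overall architecture --- induction on $k$, a uniform stepsize threshold obtained from the global Lipschitz moduli on the fixed enlarged set $\mathcal{X}_{\Delta}$, and the final passage from \eqref{Linesearch: Rule strong} to \eqref{eq: z05201743} via \eqref{eq: relation of F and Q} --- matches the paper's proof. However, the central step, where you verify that the line-search test \eqref{Linesearch: Rule strong} passes for all small $\alpha$, contains a genuine error. You write
\begin{equation*}
Q(\widehat{x}^k,y^k,z^k,1/g(\widehat{x}^k))=\wF(\widehat{x}^k,1/g(\widehat{x}^k))\le\wF(\widehat{x}^k,c_k),
\end{equation*}
and both relations are false. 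The equality in \eqref{eq: relation of F and Q} holds only when $(y,z)\in\partial g(x)\times\partial h_2(x)$ at the \emph{same} point $x$; here $(y^k,z^k)$ are subgradients at $x^k$, not at $\widehat{x}^k$, so all one gets is $Q(\widehat{x}^k,y^k,z^k,1/g(\widehat{x}^k))\ge \wF(\widehat{x}^k,1/g(\widehat{x}^k))=F(\widehat{x}^k)$. The inequality is reversed as well: by \eqref{eq: F = max_c wF}, $1/g(\widehat{x}^k)$ \emph{maximizes} $\wF(\widehat{x}^k,\cdot)$, so $\wF(\widehat{x}^k,1/g(\widehat{x}^k))\ge\wF(\widehat{x}^k,c_k)$. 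Consequently your majorization argument controls $\wF(\widehat{x}^k,c_k)$, which sits \emph{below} $F(\widehat{x}^k)$, while the line-search condition demands a bound on $Q(\widehat{x}^k,y^k,z^k,1/g(\widehat{x}^k))$, which sits \emph{above} $F(\widehat{x}^k)$. As written, your chain does not even establish the weaker descent condition \eqref{eq: z10091452}, let alone the stronger test \eqref{Linesearch: Rule strong}; the paper explicitly stresses that the two are not interchangeable.

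What is missing is the estimate the paper carries out in \eqref{eq: z08181736}: starting from the prox optimality inequality (which, combined with the descent lemma for $h_1$, controls $c_k f(\widehat{x}^k)+\langle x^k-\widehat{x}^k,\, z^k+c_k^2 f(x^k)y^k\rangle+h_1(\widehat{x}^k)$ up to a multiple of $\|\widehat{x}^k-x^k\|_2^2$), one must pass to the quantities $\hat c^{\,2} f(\widehat{x}^k)(2g(\widehat{x}^k)-g(x^k))$ and $\hat c^{\,2} f(\widehat{x}^k)y^k$ with $\hat c=1/g(\widehat{x}^k)$, which are exactly what appears in $Q(\widehat{x}^k,y^k,z^k,\hat c)$ after the Fenchel--Young equalities $\langle x^k,y^k\rangle=g(x^k)+g^*(y^k)$ and $\langle x^k,z^k\rangle=h_2(x^k)+h_2^*(z^k)$. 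The discrepancy equals $c_k\hat c^{\,2} f(\widehat{x}^k)(g(\widehat{x}^k)-g(x^k))^2+\langle x^k-\widehat{x}^k,(c_k^2 f(x^k)-\hat c^{\,2} f(\widehat{x}^k))y^k\rangle$, and bounding it by a constant times $\|\widehat{x}^k-x^k\|_2^2$ requires the Lipschitz continuity of $g$ and of $f/g^2$ on $\mathcal{X}_{\Delta}$ together with the bounds $M_{f/g^2}$, $M_{\partial g}$, $M_g$ from Lemma \ref{Lemma: Lipschitz continuous}. This control of the mismatch between the subgradient data frozen at $x^k$ and the function values evaluated at $\widehat{x}^k$ is the technical heart of the proposition; the constants you name at the end of your argument would enter precisely here, but the route you describe never reaches them.
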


\begin{proof}
	We prove this proposition by induction. It is clear that $x^0\in\mathcal{X}$ and we assume that $x^k\in\mathcal{X}$ for some $k\geq 0$. 
	
	Let $\mathcal{X}_{\Delta}$ be given as Lemma \ref{Lemma: Lipschitz continuous}.
	We first show that there exists $\underline{\alpha}_{\Delta}>0$ such that $\widehat{x}^k$ falls into the set $\mathcal{X}_{\Delta}\cap\mathcal{C}$, whenever $\alpha \in(0, \underline{\alpha}_{\Delta}]$.
	From \eqref{eq: proximal-step of MM-PGSA}, we know that 
	\begin{multline}\label{eq: z08181641}
		\alpha c_kf(\widehat{x}^k)+\frac{1}{2}\|\widehat{x}^k-x^k\|_{2}^{2}+\left\langle  \widehat{x}^k-x^k, \alpha(\nabla h_{1}(x^k)-z^k-c^{2}_k f(x^k) y^k)\right\rangle \\ \leq \alpha c_k f(x^k).
	\end{multline}
	The relation \eqref{eq: z08181641} implies that $ \alpha c_kf(\widehat{x}^k)+\frac{1}{2}\|\widehat{x}^k-x^k\|_{2}^{2}-\alpha \|\widehat{x}^k-x^k\|_{2}\|\nabla h_{1}(x^k)-z^k-c^{2}_k f(x^k) y^k\|_{2} \leq \alpha c_k f(x^k)$, which is a quadratic inequality of the term $\|\widehat{x}^k-x^k\|_{2}$. In view of this, $c_k=1/g(x^k)>0$ and $f(\widehat{x}^k) \geq 0$, we deduce that
	\begin{multline}\label{eq: z08181614}
		\|\widehat{x}^k-x^k\|_{2} \leq \alpha|g(x^k)|\left\|\nabla h_{1}(x^k)-z^k-c^{2}_k f(x^k) y^k\right\|_{2} \\
		+\sqrt{\alpha^{2}|g(x^k)|^{2}\|\nabla h_{1}(x^k)-z^k-c^{2}_k f(x^k) y^k\|_{2}^{2}+2 \alpha f(x^k)}.
	\end{multline}
	Since the compact $\mathcal{X} \subseteq \Omega$ implies that $\sup \{|g(x)|\|\nabla h_{1}(x)-z-c^{2} f(x) y\|_{2}: x \in \mathcal{X}, (y, z) \in \partial g(x) \times \partial h_{2}(x), c=1 / g(x)\}<+\infty$ and $\sup \{f(x): x \in \mathcal{X}\}<+\infty$, the relation \eqref{eq: z08181614} along with $x^k\in\mathcal{X}$ implies that the term $\|\widehat{x}^k-x^k\|_{2}$ can be narrow down by the parameter $\alpha$. Therefore, there exists some $\underline{\alpha}_{\Delta}>0$ such that 
	\begin{equation}\label{eq: z10071937}
		\|\widehat{x}^k-x^k\|_{2} \leq \Delta/2, \text{~~~~~for any $\alpha \in(0, \underline{\alpha}_{\Delta}]$}.
	\end{equation}
	This, together with $x^k\in\mathcal{X}$ and \eqref{eq: proximal-step of MM-PGSA}, yields that  $\widehat{x}^k\in\mathcal{X}_{\Delta} \cap \mathcal{C}$ when $\alpha \in(0, \underline{\alpha}_{\Delta}]$.

	Next, we shall show that there exists $\underline{\alpha}_{\sigma} \in (0,\underline{\alpha}_{\Delta}]$ such that $\widehat{x}^k\in\Omega$ and satisfies \eqref{Linesearch: Rule strong} stated in Step 2 of AMPDA whenever $\alpha \in (0,\underline{\alpha}_{\sigma}]$. Let $\alpha \in(0, \underline{\alpha}_{\Delta}]$. Clearly, with the help of Lemma \ref{Lemma: Lipschitz continuous} \StatementNum{1}, the statement that $\widehat{x}^k\in\Omega$ holds due to $\widehat{x}^k\in\mathcal{X}_{\Delta}$. From \eqref{eq: z10071937}, we have $\widehat{x}^k \in \mathcal{B}(x^k, \Delta)$. This, together with the Lipschitz continuity of $\nabla h_{1}$ on $\mathcal{B}(x^k, \Delta) \subseteq \mathcal{X}_{\Delta}$ (see Lemma \ref{Lemma: Lipschitz continuous} \StatementNum{2}), implies that
	\begin{equation}\label{eq: z05181655}
		h_1(\widehat{x}^k)\leq h_1(x^k) + \innerP{\nabla h_1(x^k)}{\widehat{x}^k-x^k} + \frac{L_{\nabla h_1}}{2}\|\widehat{x}^k-x^k\|_2^2,
	\end{equation}
	where $L_{\nabla h_1}>0$ denotes the Lipschitz modulus of $\nabla h_{1}$ on $\mathcal{X}_{\Delta}$. Multiplying $1/\alpha$ on the both sides of \eqref{eq: z08181641} and summing \eqref{eq: z05181655}, we derive that
	\begin{multline}\label{eq: z08181737}
		c_k f(\widehat{x}^k)+\left\langle x^k-\widehat{x}^k, z^k+c^{2}_k f(x^k) y^k\right\rangle + h_{1}(\widehat{x}^k)\\+ c_k\left(\frac{1}{2 \alpha}-\frac{L_{\nabla h_1}}{2}\right)\|\widehat{x}^k-x^k\|_{2}^{2}
		\leq c_k f(x^k) + h_{1}(x^k).
	\end{multline}
	In addition, by letting $\hat{c}=1 / g(\widehat{x}^k)$, the direct computation yields that
	\begin{align}\label{eq: z08181736}
		& c_k f(\widehat{x}^k)+\left\langle x^k-\widehat{x}^k, c^{2}_k f(x^k) y^k\right\rangle \\
		&\qquad -\Big( \hat{c}^{2} f(\widehat{x}^k)\left(2 g(\widehat{x}^k)-g(x^k)\right)+\left\langle x^k-\widehat{x}^k, \hat{c}^{2} f(\widehat{x}^k) y^k\right\rangle\Big) \notag \\
		&= c_k \hat{c}^{2} f(\widehat{x}^k)\left(g(\widehat{x}^k)-g(x^k)\right)^{2}+\left\langle x^k-\widehat{x}^k,\left(c^{2}_k f(x^k)-\hat{c}^{2} f(\widehat{x}^k)\right) y^k\right\rangle \notag\\
		&\stackrel{\StatementNumUpperCase{1}}{\geq}
		-\left(c_k \hat{c}^{2} f(\widehat{x}^k) L_{g}^{2}+L_{f / g^{2}}\|y^k\|_2\right)\|x^k-\widehat{x}^k\|_{2}^{2} \notag\\
		& \stackrel{\StatementNumUpperCase{2}}{\geq}-c_k\left(M_{f / g^{2}} L_{g}^{2}+L_{f / g^{2}} M_{\partial g} M_{g}\right)\|x^k-\widehat{x}^k\|_{2}^{2},\notag
	\end{align}
	where $\StatementNumUpperCase{1}$ follows from the \textit{Cauchy-Schwarz Inequality} and Lemma \ref{Lemma: Lipschitz continuous} \StatementNum{2} with the positive $L_{g}$ and $L_{f/ g^2}$ being the Lipschitz moduli of $g$ and $f/g^2$ on $\mathcal{X}_{\Delta}$, respectively, and $\StatementNumUpperCase{2}$ follows from Lemma \ref{Lemma: Lipschitz continuous} \StatementNum{3}. Combining \eqref{eq: z08181737} and \eqref{eq: z08181736}, we derive that
	\begin{multline}\label{eq: z08181753}
		\hat{c}^{2} f(\widehat{x}^k)\left(2 g\left(\widehat{x}^k\right)-g(x^k)\right)+\left\langle x^k-\widehat{x}^k, z^k+\hat{c}^{2} f(\widehat{x}^k) y^k\right\rangle 
		+h_{1}(\widehat{x}^k)\\+c_k\left(\frac{1}{2 \alpha}-\frac{L_{\nabla h_1}}{2}-M_{f / g^{2}} L_{g}^{2}-L_{f / g^{2}} M_{\partial g} M_{g}\right)\left\|\widehat{x}^k-x^k\right\|_{2}^{2} 
		\\
		\leq c_k f(x^k)+h_{1}(x^k).
	\end{multline}
	Recalling $\widehat{x}^k\in\mathcal{X}_{\Delta}\cap\mathcal{C}$, and	
	invoking \eqref{eq: def Q}, $\innerP{x^k}{y^k} = g(x^k)+g^*(y^k)$ and $\innerP{x^k}{z^k} = h_2(z^k)+h_2^*(z^k)$, one can verify that \eqref{eq: z08181753} is equivalent to 
	\begin{multline*}
		Q(\widehat{x}^k,y^k,z^k,1/g(\widehat{x}^k))\\ + c_k\left(\frac{1}{2 \alpha}-\frac{L_{\nabla h_1}}{2}-M_{f / g^{2}} L_{g}^{2}-L_{f / g^{2}} M_{\partial g} M_{g}\right)\|\widehat{x}^k-x^k\|_2^2 \leq F(x^k).
	\end{multline*}
	Thus, the $\widehat{x}^k$ given by \eqref{eq: proximal-step of MM-PGSA} falls into $\mathcal{X}_{\Delta}\cap\mathcal{C}\subseteq\Omega\cap\mathcal{C}$, and satisfies \eqref{Linesearch: Rule strong}, if $\alpha\in(0, \underline{\alpha}_{\sigma}]$, where $\underline{\alpha}_{\sigma}>0$ is given by
	\begin{equation*}
		\underline{\alpha}_{\sigma}:=\min \left\{\underline{\alpha}_{\Delta},\left(L_{\nabla h_1}+2 M_{f / g^{2}} L_{g}^{2}+\left(2 L_{f / g^{2}} M_{\partial g}+\sigma\right) M_{g}\right)^{-1}\right\}.
	\end{equation*} 
	This implies that Step 2 of AMPDA terminates at some $\alpha_k\geq \AlphaStar$ in the $k$-th iteration.
	
	Furthermore, according to Step 2 of AMPDA in the $k$-th iteration, 	
	the derived $x^{k+1}\in\Omega\cap\mathcal{C}$ satisfies
	\begin{equation}\label{eq: z10072052}
		Q(x^{k+1},y^k,z^k,1/g(x^{k+1})) + \frac{\sigma}{2}\|x^{k+1}-x^k\|_2^2 \leq F(x^k),
	\end{equation}
	which, together with \eqref{eq: relation of F and Q}, indicates \eqref{eq: z05201743}. In view of \eqref{eq: z05201743} and $x^k\in\mathcal{X}$, we obtain $x^{k+1}\in\mathcal{X}$. Consequently, we conclude this proposition by induction.
	\hfill $\square$ \end{proof}

Proposition \ref{Lemma: well-defined of the Alg} and the inequality \eqref{eq: z05201743} immediately lead to the next corollary. In particular, we prove that the AMPDA can find an $\epsilon$-critical point of $F$ within $\mathcal{O}(\epsilon^{-2})$ iterations.

\begin{corollary}\label{Corollary: xyz bounded and xk+1 - xk -> 0}
	Suppose that Assumption \ref{assumption: X is compact} holds. Then the following statements hold.
	\begin{enumerate}[\upshape(\romannumeral 1 )]
		\item The sequence $\{(x^k,y^k,z^k,c_k): k\in\mathbb{N} \}$ generated by AMPDA is bounded.
		\item The sequence $\{F(x^k):k\in\mathbb{N} \}$ descends monotonically with $F_{\infty} := \inf\{F(x^k):k\in\mathbb{N} \} = \lim_{k\to\infty} F(x^k)$, and we have
		\begin{equation}\label{eq: z05201744}
			\lim_{k\to\infty}\|x^{k+1}-x^k\|_2=0.
		\end{equation} 
		\item Given $\epsilon>0$, there exists a $\hat{k}\leq K$ with
		\begin{equation*}
			K:=\frac{2(F(x^0)-F_{\infty})}{\sigma\epsilon^2},
		\end{equation*}
		such that $x^{\hat{k}}$ is an $\epsilon$-critical point of $F$.
	\end{enumerate}
\end{corollary}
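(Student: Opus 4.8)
The plan is to handle the three claims in sequence, leaning on Proposition \ref{Lemma: well-defined of the Alg} and Lemma \ref{Lemma: Lipschitz continuous}. For statement \StatementNum{1}, I would first invoke Proposition \ref{Lemma: well-defined of the Alg} to place the entire sequence $\{x^k\}$ in the compact level set $\mathcal{X}$, so $\{x^k\}$ is bounded. Boundedness of $\{c_k\}$ is then immediate from $c_k = 1/g(x^k)$ together with $m_g \le g(x^k) \le M_g$ on $\mathcal{X}$ (Lemma \ref{Lemma: Lipschitz continuous}), while boundedness of $\{y^k\}$ and $\{z^k\}$ follows because $y^k \in \partial g(x^k)$ and $z^k \in \partial h_2(x^k)$ lie in $\cup_{x\in\mathcal{X}}\partial g(x)$ and $\cup_{x\in\mathcal{X}}\partial h_2(x)$, which are bounded since $g$ and $h_2$ are real-valued convex and $\mathcal{X}$ is bounded.

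For statement \StatementNum{2}, the monotone descent of $\{F(x^k)\}$ is read off directly from \eqref{eq: z05201743}. Since $F$ is bounded below on the compact $\mathcal{X}$ containing all iterates (for instance because $f/g \ge 0$ and the continuous $h$ is bounded below there), the decreasing sequence $\{F(x^k)\}$ converges to its infimum $F_\infty > -\infty$. Summing \eqref{eq: z05201743} over $k$ then telescopes to $\frac{\sigma}{2}\sum_{k=0}^{\infty}\|x^{k+1}-x^k\|_2^2 \le F(x^0) - F_\infty < +\infty$, and the resulting square-summability forces \eqref{eq: z05201744}.

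The crux is statement \StatementNum{3}, whose key observation is that the proximal update \eqref{eq: proximal-step of MM-PGSA} matches the structure of Definition \ref{Def: eps-critical point} verbatim: at iteration $k$ one has $x^{k+1} = \widehat{x}^k \in \prox_{\alpha_k c_k f_\mathcal{C}}(x^k - \alpha_k(\nabla h_1(x^k) - z^k - c_k^2 f(x^k) y^k))$ with $c_k = 1/g(x^k)$, $(y^k,z^k) \in \partial g(x^k)\times\partial h_2(x^k)$, and $x^k \in \mathcal{X} \subseteq \Omega$. Taking $\bar{x} = x^k$ with witnesses $(\alpha_k, y^k, z^k)$ and $\bar{c}=c_k$, the distance in \eqref{eq: def eps-critical point} is therefore bounded above by $\|x^k - x^{k+1}\|_2$, since $x^{k+1}$ itself belongs to that prox set. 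Consequently, $\|x^{k+1}-x^k\|_2 \le \epsilon$ already certifies $x^k$ as an $\epsilon$-critical point of $F$.

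It then remains a pigeonhole argument. Suppose, for contradiction, that $\|x^{k+1}-x^k\|_2 > \epsilon$ for every integer $k$ with $0 \le k \le K$. Summing \eqref{eq: z05201743} over $k = 0, \dots, \lfloor K\rfloor$ and using $F(x^{\lfloor K\rfloor+1}) \ge F_\infty$ gives $\frac{\sigma}{2}(\lfloor K\rfloor + 1)\epsilon^2 < F(x^0) - F_\infty = \frac{\sigma}{2}K\epsilon^2$, hence $\lfloor K\rfloor + 1 < K$, which is impossible. Thus some $\hat{k} \le K$ satisfies $\|x^{\hat{k}+1}-x^{\hat{k}}\|_2 \le \epsilon$, and by the preceding paragraph $x^{\hat{k}}$ is an $\epsilon$-critical point. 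The only genuinely delicate step is the identification in the third paragraph: one must confirm that the quantities $(\alpha_k, c_k, y^k, z^k)$ produced in Steps 1–2 of AMPDA are admissible witnesses in Definition \ref{Def: eps-critical point} and that membership of $x^{k+1}$ in the prox set really does control the distance; the telescoping and counting that follow are routine.
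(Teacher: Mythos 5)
Your proposal is correct and follows essentially the same route as the paper: boundedness from the compact level set $\mathcal{X}$ and the boundedness of subdifferentials of real-valued convex functions, monotone descent and $\|x^{k+1}-x^k\|_2\to 0$ from telescoping \eqref{eq: z05201743}, and for statement (iii) the same key observation that $x^{k+1}$ itself lies in the prox set of Definition \ref{Def: eps-critical point} evaluated at $\bar{x}=x^k$ with witnesses $(\alpha_k,y^k,z^k)$, so the distance is controlled by $\|x^{k+1}-x^k\|_2$. The paper phrases the final counting step as a direct bound $\min_{0\le k\le K-1}\|x^{k+1}-x^k\|_2\le\sqrt{2(F(x^0)-F_\infty)/(\sigma K)}=\epsilon$ rather than your pigeonhole contradiction, but the two are equivalent.
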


\begin{proof}
	First, we prove the statement \StatementNum{1}. The boundedness of $\{(x^k,c_k):k\in\mathbb{N}\}$ follows from the compactness of $\mathcal{X}$ and $c_k = 1/g(x^k)$. Also from the compactness of $\mathcal{X}$, we derive that $\{(y^k,z^k):k\in\mathbb{N}\}$ is bounded, since $g$ and $h_2$ are real-valued convex and $(y^k,z^k)\in\partial g(x^k)\times \partial h_2(x^k)$. Next we prove the statement \StatementNum{2}. Since $F$ is continuous on $\Omega\cap\mathcal{C}\supseteq\mathcal{X}$, the function $F$ is bounded below on the compact $\mathcal{X}$. This along with \eqref{eq: z05201743} yields $F_{\infty}  = \lim_{k\to\infty} F(x^k)$. Then, by passing to the limit on the both sides of \eqref{eq: z05201743} with $k\to\infty$, we derive \eqref{eq: z05201744}, and the statement \StatementNum{2} follows. Finally, we prove the statement \StatementNum{3}. For all $K>0$, we have
	\begin{equation}\label{eq: z08211841}
		F(x^K) + \frac{\sigma}{2}\sum_{k=0}^{K-1}\|x^{k+1}-x^k\|^2_2 \leq F(x^0),
	\end{equation}
	by summing up \eqref{eq: z05201743} over $k=0,1,...,K-1$. Invoking Proposition \ref{Lemma: well-defined of the Alg} and \eqref{eq: proximal-step of MM-PGSA}, we have 
	\begin{equation}\label{eq: xk+1}
		x^{k+1} \in \prox_{ \alpha_k c_{k}  f_{\mathcal{C}} }\left( x^k - \alpha_k (\nabla h_1(x^k)-z^k - c_k^2 f(x^k)y^k) \right),
	\end{equation}
	for some $\alpha_k\geq \AlphaStar$. From \eqref{eq: z08211841}, \eqref{eq: xk+1} and $F_{\infty}\leq F(x^K)$, we derive
	\begin{multline}\label{eq: 08231224}
		\min_{0\leq k \leq K-1} \dist(x^k, \prox_{ \alpha_k c_k  f_{\mathcal{C}} }(
		x^k - \alpha_k (\nabla h_1(x^k)-z^k - c_k^2 f(x^k) y^k))) \\
		\leq
		\sqrt{\frac{2}{\sigma K}(F(x^0)-F_{\infty})}.
	\end{multline}
	According to Definition \ref{Def: eps-critical point} and \eqref{eq: 08231224}, we conclude the statement \StatementNum{3}.
	\hfill $\square$ \end{proof}

Now we are ready to prove the subsequential convergence of the proposed AMPDA.

\begin{theorem}\label{Theorem: subsequential convergence}
	Suppose that Assumption \ref{assumption: X is compact} holds. Then any accumulation point of the sequence $\{x^k: k\in\mathbb{N} \}$ generated by AMPDA falls into the level set $\mathcal{X}$, and is a critical point of $F$.
\end{theorem}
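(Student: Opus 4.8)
The plan is to fix an arbitrary accumulation point $x^\star$ of $\{x^k\}$ and establish the two assertions — membership in $\mathcal{X}$ and criticality — by passing to a carefully chosen convergent subsequence. The first assertion is essentially free: Proposition \ref{Lemma: well-defined of the Alg} gives $\{x^k\}\subseteq\mathcal{X}$, and Assumption \ref{assumption: X is compact} makes $\mathcal{X}$ compact, hence closed, so $x^\star\in\mathcal{X}\subseteq\Omega\cap\mathcal{C}$. In particular $g(x^\star)\geq m_g>0$ by Lemma \ref{Lemma: Lipschitz continuous}, so that $c_\star:=1/g(x^\star)$ is well defined.

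For criticality, first I would choose a subsequence with $x^{k_j}\to x^\star$. By Corollary \ref{Corollary: xyz bounded and xk+1 - xk -> 0} \StatementNum{1} the sequence $\{(y^k,z^k)\}$ is bounded, and by Proposition \ref{Lemma: well-defined of the Alg} the stepsizes obey $\alpha_k\in[\AlphaStar,\overline{\alpha}]$; passing to a further subsequence I may therefore assume $y^{k_j}\to y^\star$, $z^{k_j}\to z^\star$, and $\alpha_{k_j}\to\alpha^\star\in[\AlphaStar,\overline{\alpha}]$, so $\alpha^\star>0$. The outer semicontinuity of $\partial g$ and $\partial h_2$ (valid since $g,h_2$ are real-valued convex) then yields $y^\star\in\partial g(x^\star)$ and $z^\star\in\partial h_2(x^\star)$. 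Using \eqref{eq: z05201744} I also get $x^{k_j+1}\to x^\star$, and by continuity of $f$, $g$, $1/g$ and $\nabla h_1$ on the compact set $\mathcal{X}$ (Lemma \ref{Lemma: Lipschitz continuous}) together with $g(x^\star)>0$, the prox centers $v^{k_j}:=x^{k_j}-\alpha_{k_j}(\nabla h_1(x^{k_j})-z^{k_j}-c_{k_j}^2 f(x^{k_j})y^{k_j})$ converge to $v^\star:=x^\star-\alpha^\star(\nabla h_1(x^\star)-z^\star-c_\star^2 f(x^\star)y^\star)$, while $c_{k_j}\to c_\star$ and $\alpha_{k_j}c_{k_j}\to\alpha^\star c_\star>0$.

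The crux is to turn these convergences into the inclusion $x^\star\in\prox_{\alpha^\star c_\star f_{\mathcal{C}}}(v^\star)$, which, by the remark preceding Definition \ref{Def: eps-critical point}, certifies that $x^\star$ is a critical point of $F$. Rather than passing to the limit inside the Fr\'echet subdifferential of $f_{\mathcal{C}}$ — illegitimate because $\widehat{\partial}$ is not outer semicontinuous — I would argue at the level of the defining variational inequality of the proximal map. From \eqref{eq: xk+1}, for every $z\in\mathbb{R}^n$ and every $j$,
\[
\alpha_{k_j}c_{k_j}f_{\mathcal{C}}(x^{k_j+1})+\tfrac{1}{2}\|x^{k_j+1}-v^{k_j}\|_2^2 \leq \alpha_{k_j}c_{k_j}f_{\mathcal{C}}(z)+\tfrac{1}{2}\|z-v^{k_j}\|_2^2.
\]
Taking $\liminf_{j\to\infty}$ on both sides — using $f_{\mathcal{C}}\geq 0$ and $\alpha_{k_j}c_{k_j}\to\alpha^\star c_\star>0$ together with the lower semicontinuity of $f_{\mathcal{C}}$ to bound the left-hand side below by $\alpha^\star c_\star f_{\mathcal{C}}(x^\star)+\tfrac12\|x^\star-v^\star\|_2^2$, and the continuity of the quadratic and of $\alpha_{k_j}c_{k_j}f_{\mathcal{C}}(z)$ on the right — I obtain $\alpha^\star c_\star f_{\mathcal{C}}(x^\star)+\tfrac12\|x^\star-v^\star\|_2^2\leq\alpha^\star c_\star f_{\mathcal{C}}(z)+\tfrac12\|z-v^\star\|_2^2$ for all $z$, that is, $x^\star\in\prox_{\alpha^\star c_\star f_{\mathcal{C}}}(v^\star)$.

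I expect the main obstacle to be exactly this last limiting step: because $f_{\mathcal{C}}$ is nonconvex and the prox scaling $\alpha_{k_j}c_{k_j}$ varies with $j$, neither outer semicontinuity of the subdifferential nor continuity of the proximal map is available, so the variational-inequality argument combined with lower semicontinuity is the natural substitute. A secondary technical point to check carefully is that all the quantities involved ($c_{k_j}$, $f(x^{k_j})$, $\nabla h_1(x^{k_j})$, and the subgradient bounds) remain controlled along the subsequence; this is guaranteed by confining everything to the compact sets $\mathcal{X}$ and $\mathcal{X}_\Delta$ via Lemma \ref{Lemma: Lipschitz continuous} and by $g(x^\star)>0$.
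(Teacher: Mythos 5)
Your proposal is correct and follows essentially the same route as the paper: both pass to the limit in the variational inequality defining the proximal step \eqref{eq: xk+1} along a subsequence on which $x^{k_j}\to x^\star$, $(y^{k_j},z^{k_j})\to(y^\star,z^\star)\in\partial g(x^\star)\times\partial h_2(x^\star)$, and then read off criticality from $x^\star\in\prox_{\alpha c_\star f_{\mathcal C}}(x^\star-\alpha(\nabla h_1(x^\star)-z^\star-c_\star^2 f(x^\star)y^\star))$ via the generalized Fermat rule. The only (immaterial) difference is that you extract a convergent subsequence $\alpha_{k_j}\to\alpha^\star$ and use lower semicontinuity of $f_{\mathcal C}$, whereas the paper keeps $\alpha_k$ only through the uniform bounds $\AlphaStar\le\alpha_k\le\overline\alpha$ and uses the continuity of $f$ on the compact set $\mathcal X$.
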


\begin{proof}
	Let $x^{\star}$ be an accumulation point of $\{x^k: k\in\mathbb{N} \}$.
	In view of $\{x^k: k\in\mathbb{N} \}\subseteq \mathcal{X}$ (see Proposition \ref{Lemma: well-defined of the Alg}), we derive from the compactness of $\mathcal{X}$ that $x^{\star}\in \mathcal{X}$.
	Next, we shall show that $x^{\star}$ is a critical point of $F$. 
	Let $\mathcal{K}\subseteq \mathbb{N}$ be  a subsequence, such that $\{x^k: k\in\mathcal{K} \}$ converges to the $x^{\star}$. In view of Proposition \ref{Lemma: well-defined of the Alg}, \eqref{eq: proximal-step of MM-PGSA} and \eqref{eq: xk+1}, it holds for  all $(x,k)\in\mathcal{C}\times \mathcal{K}$ that
	\begin{multline*}
		\alpha_k c_kf(x^{k+1}) + \frac{1}{2} \|x^{k+1}-x^k\|_2^2 + \innerP{x^{k+1}-x^k}{\alpha_k(\nabla h_1(x^k) - z^k -c_k^2f(x^k) y^k)}\\
		\leq \alpha_k c_kf(x) + \frac{1}{2} \|x-x^k\|_2^2 + \innerP{x-x^k}{\alpha_k(\nabla h_1(x^k) - z^k -c_k^2 f(x^k)y^k)},
	\end{multline*}
	This, together with the fact that $\AlphaStar \leq \alpha_k \leq \overline{\alpha}$, leads to 
	\begin{multline}
		c_kf(x^{k+1}) + \frac{1}{2\overline{\alpha}}\|x^{k+1}-x^k\|_2^2  + \innerP{x^{k+1}-x^k}{\nabla h_1(x^k) - z^k -c_k^2f(x^k) y^k}\\
		\leq c_kf(x) + \frac{1}{2\AlphaStar} \|x-x^k\|_2^2 + \innerP{x-x^k}{\nabla h_1(x^k) - z^k -c_k^2f(x^k) y^k}.\label{eq: z05202041}
	\end{multline}
	Since $\{(y^k,z^k):k\in\mathcal{K}\}$ is bounded (see Corollary \ref{Corollary: xyz bounded and xk+1 - xk -> 0}) and $(y^k,z^k)\in\partial g(x^k)\times \partial h_2(x^k)$ for all $k\in\mathbb{N}$, we deduce that some subsequence $\mathcal{K}_2\subseteq\mathcal{K}$ such that 
	$\{(y^k,z^k):k\in\mathcal{K}_2\}$ converges to some $(y^{\star},z^{\star})\in\partial g(x^{\star}) \times \partial h_2(x^{\star})$. In view of \eqref{eq: z05201744} and the continuity of $f$ and $1/g$ on $\mathcal{X}$, we pass to the limit with $k\in\mathcal{K}_2$ and $k\to\infty$ on the both sides of \eqref{eq: z05202041}, and then obtain that
	\begin{multline}\label{eq: z05202141}
		c_{\star} f(x^{\star})
		\leq c_{\star} f(x) + \frac{\|x-x^{\star}\|_2^2}{2\AlphaStar}  \\ + \innerP{x-x^{\star}}{\nabla h_1(x^{\star}) - z^{\star} -c_{\star}^2 f(x^{\star}) y^{\star}},
	\end{multline}
	for all $x\in\mathcal{C}$, where $c_{\star} = 1/g(x^{\star})$. The  relation \eqref{eq: z05202141} implies that
	\begin{multline}\label{eq: z10081525}
		x^{\star}\in\arg\min\{ c_{\star}f_{\mathcal{C}}(x) +\\ \frac{1}{2\alpha} \|x-x^{\star}+\alpha (\nabla h_1(x^{\star}) - z^{\star} -c_{\star}^2 f(x^{\star}) y^{\star}) \|^2_2
		:x\in\mathbb{R}^n \}
	\end{multline}
	with $\alpha = \AlphaStar$. By invoking the generalized \textit{Fermat's Rule},  the relation \eqref{eq: z10081525} yields \eqref{eq: def lifted stationary point}, which together with the fact $x^{\star}\in\mathcal{X}\subseteq\dom(F)$ indicates that $x^{\star}$ is a critical point of $F$ according to Definition \ref{Def: lifted stationary point}. 
	\hfill $\square$ \end{proof}

To close this subsection, we show that the proximal step \eqref{eq: proximal-step of MM-PGSA} in AMPDA relates to the update formula \eqref{eq: z12131420} that is used in existing proximal algorithms for nonsmooth fractional programs, if $g$ is smooth and $h_2\equiv 0$ (i.e., $h=h_1$) in problem \eqref{problem:root}. In this case, we know that $y^k=\nabla g(x^k)$, $z^k=0$ and $g(x^k)\nabla h(x^k)= \nabla(hg)(x^k)-h(x^k)\nabla g(x^k)$. Using these facts and $c_k=1/g(x^k)$ together with the equivalence between \eqref{eq: proximal-step of MM-PGSA} and \eqref{eq: proximal-step of MM-PGSA 2}, we can express \eqref{eq: proximal-step of MM-PGSA} as
\begin{multline}\label{eq: z12200029}
	x^{k+1} \in  \arg\min\left\{f(x)+ 	\Big\langle\nabla (hg)(x^{k})- 	\left( \frac{f(x^k)}{g(x^k)}+h(x^k) \right) \nabla g(x^k),
	\right.  \\ 
	\left. x-x^{k}\Big\rangle+\frac{g(x^k)}{2 \alpha} \|x-x^{k}\|_{2}^2: x \in \mathcal{C}\right\}. 
\end{multline}
By further choosing $\alpha=g(x^k)\eta_k$ with $\eta_k>0$ in \eqref{eq: z12200029}, we immediately obtain the update formula \eqref{eq: z12131420}.


\section{Sequential convergence of AMPDA} \label{section: Global convergence analysis}

In this section, we investigate the convergence of the entire sequence generated by AMPDA for solving problem \eqref{problem:root} under the KL assumption. To this end, we first present the following proposition, which regards the sufficient descent of $Q$ defined by \eqref{eq: def Q}, and is a direct result of \eqref{eq: relation of F and Q} and the iterative procedure of AMPDA.

\begin{proposition}\label{Proposition: sufficient descent of AMPDA}
	Suppose that Assumption \ref{assumption: X is compact} holds, and let $\{(x^{k}, y^{k}, z^{k}, c_{k}) : k \in \mathbb{N}\}$ be generated by AMPDA. Then, it holds for all $k\in\mathbb{N}$ that
	\begin{equation}\label{eq: z08072031}
		Q(x^{k+1},y^k,z^k,c_{k+1})+\frac{\sigma}{2}\|x^{k+1}-x^k\|_2^2 \leq Q(x^k,y^{k-1},z^{k-1},c_{k}).
	\end{equation}  
\end{proposition}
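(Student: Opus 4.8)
The plan is to prove the one-line estimate \eqref{eq: z08072031} by inserting the scalar $F(x^k)$ between its two sides. Concretely, I would establish the sandwich
\begin{equation*}
	Q(x^{k+1},y^k,z^k,c_{k+1})+\frac{\sigma}{2}\|x^{k+1}-x^k\|_2^2 \;\leq\; F(x^k) \;\leq\; Q(x^k,y^{k-1},z^{k-1},c_{k}),
\end{equation*}
and then chain the two inequalities. Both bounds are consequences of facts already established, so the argument amounts to matching indices and variables correctly rather than to any new estimation.

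For the left inequality I would invoke Proposition \ref{Lemma: well-defined of the Alg}, whose proof delivers exactly \eqref{eq: z10072052}, namely
\begin{equation*}
	Q(x^{k+1},y^k,z^k,1/g(x^{k+1})) + \frac{\sigma}{2}\|x^{k+1}-x^k\|_2^2 \leq F(x^k).
\end{equation*}
Since Step 1 of AMPDA sets $c_{k+1}=1/g(x^{k+1})$, this is precisely the left bound and no further work is required. For the right inequality I would apply the relation \eqref{eq: relation of F and Q} with the specialization $x=x^k$, $(y,z)=(y^{k-1},z^{k-1})$, and maximizer $c_x=1/g(x^k)=c_k$, which yields $F(x^k)=\wF(x^k,c_k)\leq Q(x^k,y^{k-1},z^{k-1},c_k)$.

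The only point demanding care is the verification of the hypotheses of \eqref{eq: relation of F and Q}. First, $x^k\in\Omega\cap\mathcal{C}$ holds because $\{x^k:k\in\mathbb{N}\}\subseteq\mathcal{X}\subseteq\Omega\cap\mathcal{C}$ by Proposition \ref{Lemma: well-defined of the Alg}. Second, one needs $(y^{k-1},z^{k-1})\in\dom(g^*)\times\dom(h_2^*)$; this follows from the choice $(y^{k-1},z^{k-1})\in\partial g(x^{k-1})\times\partial h_2(x^{k-1})$ in Step 1 of AMPDA together with the fact recalled in Section \ref{subsection: Generalized subdifferential} that a subgradient of a convex function necessarily lies in the domain of its conjugate, by the \emph{Fenchel-Young inequality}. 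With these two hypotheses in hand, \eqref{eq: relation of F and Q} applies and gives the right bound. The reasoning runs verbatim for every $k\geq1$, for which all quantities $y^{k-1},z^{k-1},c_k$ on the right-hand side are defined; there is thus no genuine obstacle, only the bookkeeping of aligning $c_{k+1}=1/g(x^{k+1})$ and $c_k=1/g(x^k)$ with the specializations of \eqref{eq: z10072052} and \eqref{eq: relation of F and Q}, respectively.
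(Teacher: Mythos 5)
Your proposal is correct and follows exactly the route the paper intends: the paper states the proposition is ``a direct result of \eqref{eq: relation of F and Q} and the iterative procedure of AMPDA,'' which is precisely your sandwich through $F(x^k)$, with the left bound coming from the line-search condition \eqref{Linesearch: Rule strong} (i.e., \eqref{eq: z10072052} with $c_{k+1}=1/g(x^{k+1})$) and the right bound from \eqref{eq: relation of F and Q}. Your verification of the hypotheses of \eqref{eq: relation of F and Q} and the remark about the index $k=0$ (where $y^{-1},z^{-1}$ are undefined) are appropriate and, if anything, more careful than the paper's one-line justification.
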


Next we prove that the relative error condition stated in Proposition \ref{ppsition:3.4-3} \StatementNum{2} holds for $Q$ at $\{(x^{k+1}, y^{k}, z^{k}, c_{k+1}) : k \in \mathbb{N}\}$ generated by AMPDA. 

\begin{proposition}\label{Proposition: relative error conditions of AMPDA}
	Suppose that Assumption 2 holds and let $\{(x^{k}, y^{k}, z^{k}, c_{k}) : k \in \mathbb{N}\}$ be generated by AMPDA. Then there exists $b>0$ such that, for all $k \in \mathbb{N}$, 
	\begin{equation}\label{eq: z08072032}
		\dist(0,\partial Q(x^{k+1},y^k,z^k,c_{k+1}))\leq b \|x^{k+1}-x^k\|_2.
	\end{equation}
\end{proposition}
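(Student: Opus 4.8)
The plan is to exhibit one explicit element $\nu=(\nu_x,\nu_y,\nu_z,\nu_c)$ of the limiting subdifferential $\partial Q(x^{k+1},y^k,z^k,c_{k+1})$ and bound each of its four blocks by a constant multiple of $\|x^{k+1}-x^k\|_2$; then $\dist(0,\partial Q(x^{k+1},y^k,z^k,c_{k+1}))\le\|\nu\|_2\le b\,\|x^{k+1}-x^k\|_2$ follows at once. Throughout I write $c_{k+1}=1/g(x^{k+1})$ and exploit that, by Corollary \ref{Corollary: xyz bounded and xk+1 - xk -> 0} and Lemma \ref{Lemma: Lipschitz continuous}, the iterates stay in a compact set on which $f$, $g$, $1/g$, $f/g^2$ and $\nabla h_1$ are Lipschitz and $g\ge m_g/2$.

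First I would dispose of the three ``easy'' blocks using Fenchel--Young. Since $y^k\in\partial g(x^k)$ and $z^k\in\partial h_2(x^k)$, the equivalences recalled in Section \ref{section:Notation_and_preliminaries} give $x^k\in\partial g^*(y^k)$ and $x^k\in\partial h_2^*(z^k)$. From \eqref{eq: def Q}, the $z$-part of $Q$ is $\partial h_2^*(z)-x$, so the choice $x^k\in\partial h_2^*(z^k)$ yields $\nu_z=x^k-x^{k+1}$ with $\|\nu_z\|_2=\|x^{k+1}-x^k\|_2$; the $y$-part is the nonnegative scaling $c^2 f(x)(\partial g^*(y)-x)$, so the choice $x^k\in\partial g^*(y^k)$ yields $\nu_y=c_{k+1}^2 f(x^{k+1})(x^k-x^{k+1})$ with $\|\nu_y\|_2\le M_{f/g^2}\|x^{k+1}-x^k\|_2$. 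The $c$-part is a genuine derivative: substituting $g^*(y^k)=\innerP{x^k}{y^k}-g(x^k)$ gives $\nu_c=\nabla_c Q=2c_{k+1}f(x^{k+1})D_k$, where $D_k:=g(x^{k+1})-g(x^k)-\innerP{y^k}{x^{k+1}-x^k}\ge 0$ is the Bregman-type gap of $g$ at $x^k$; bounding $D_k\le(L_g+M_{\partial g})\|x^{k+1}-x^k\|_2$ yields $|\nu_c|\le C_c\|x^{k+1}-x^k\|_2$.

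The substantive block is $\nu_x$, which I would control through the optimality of the proximal step. Writing $Q(\cdot,y^k,z^k,c_{k+1})=\iota_{\mathcal C}+f\cdot W+h_1-\innerP{\cdot}{z^k}+\text{const}$ with the affine-in-$x$ multiplier $W(x):=2c_{k+1}+c_{k+1}^2(g^*(y^k)-\innerP{x}{y^k})$, one checks $W(x^{k+1})=c_{k+1}+c_{k+1}^2 D_k>0$ and $\nabla_x W\equiv-c_{k+1}^2 y^k$, so the candidate is $\nu_x=W(x^{k+1})\xi^{k+1}-c_{k+1}^2 f(x^{k+1})y^k+\nabla h_1(x^{k+1})-z^k$, where $\xi^{k+1}\in\widehat\partial f_{\mathcal C}(x^{k+1})$ is the Fr\'echet subgradient furnished by \eqref{eq: proximal-step of MM-PGSA}. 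Its optimality condition reads $c_k\xi^{k+1}=-\nabla h_1(x^k)+c_k^2 f(x^k)y^k+z^k-\tfrac1{\alpha_k}(x^{k+1}-x^k)$. Substituting this, the $z^k$ terms cancel, while $\nabla h_1(x^{k+1})-\nabla h_1(x^k)$, the increment of $f/g^2$ appearing in $c_k^2 f(x^k)-c_{k+1}^2 f(x^{k+1})$, and $W(x^{k+1})/c_k-1$ are each of order $\|x^{k+1}-x^k\|_2$ by Lemma \ref{Lemma: Lipschitz continuous}; since $\tfrac1{\alpha_k}\le(\AlphaStar)^{-1}$ by Proposition \ref{Lemma: well-defined of the Alg} and all remaining quantities are bounded on $\mathcal X$, I obtain $\|\nu_x\|_2\le C_x\|x^{k+1}-x^k\|_2$.

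The main obstacle is not these estimates but verifying that the assembled $\nu$ is a bona fide element of the \emph{full} limiting subdifferential $\partial Q$ rather than merely a tuple of blockwise subgradients. The difficulty concentrates in the coupling term $c^2 f(x)(g^*(y)-\innerP{x}{y})$, a product of the nonsmooth nonconvex $f$, the convex conjugate $g^*$, and the smooth factors $c^2$ and $\innerP{x}{y}$; a generic Lipschitz product rule is unavailable because $g^*$ need not be locally Lipschitz at $y^k$ (e.g.\ $y^k$ may lie on the boundary of $\dom g^*$). I would therefore resolve the calculus by a structured decomposition: peel off the jointly $C^1$ terms $h_1(x)-\innerP{x}{z}$ via the exact sum rule for smooth additions; split off $h_2^*(z)$ via the separable sum rule; treat the $x$-direction of $f\cdot W$ via the product rule for a locally Lipschitz function times a factor that is affine and strictly positive in $x$ at the iterate; and handle $g^*$, $h_2^*$ solely through their convex subdifferentials, the nonnegativity of the coefficient $c^2 f(x)$ keeping every scaling valid. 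Carrying out this decomposition so that the four blocks genuinely combine into a single subgradient of $\partial Q$ is the crux of the argument.
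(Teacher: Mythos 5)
Your proposal is correct and follows essentially the same route as the paper: the candidate $(\nu_x,\nu_y,\nu_z,\nu_c)$ you construct is exactly the paper's subgradient $w^{k+1}$ (with $w_{f_{\mathcal C}}^{k+1}$ from the proximal optimality condition), your blockwise bounds via Lemma \ref{Lemma: Lipschitz continuous} and $\alpha_k\ge\AlphaStar$ match \eqref{eq: z08292200}--\eqref{eq: z08292159}, and the calculus issue you flag as the crux is resolved in the paper's Appendix \ref{Appendix: subgradient of Q} by precisely the decomposition you outline, namely a product rule for a nonnegative locally Lipschitz factor times a convex-plus-continuous factor (Proposition \ref{Prop: Product Rule of F-subdiff}), which avoids any Lipschitz assumption on $g^*$. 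The only part you leave unexecuted is the detailed verification of that inclusion \eqref{eq: z08072230}, which the paper itself defers to the appendix.
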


\begin{proof}
	For any $(\bar{x}, \bar{y}, \bar{z}) \in(\Omega \cap \mathcal{C}) \times \dom(\partial g^{*}) \times \dom(h_{2}^{*})$ and $\bar{c}=1 / g(\bar{x})$, we have
	\begin{multline}\label{eq: z08072230}
		\widehat{\partial} Q(\bar{x}, \bar{y}, \bar{z}, \bar{c}) \\ \supseteq 
		\left[\begin{array}{l}			
			\left(2 \bar{c}+\bar{c}^{2}(g^{*}(\bar{y})-\langle\bar{x}, \bar{y}\rangle)\right) \widehat{\partial} f_{\mathcal{C}}(\bar{x})-\bar{c}^{2} f_{\mathcal{C}}(\bar{x}) \bar{y}+\nabla h_{1}(\bar{x})-\bar{z} \\
			\bar{c}^{2} f_{\mathcal{C}}(\bar{x}) \partial g^{*}(\bar{y})-\bar{c}^{2} f_{\mathcal{C}}(\bar{x}) \bar{x} \\
			\widehat{\partial} h_{2}^{*}(\bar{z})-\bar{x} \\
			2 f_{\mathcal{C}}(\bar{x})+2 \bar{c} f_{\mathcal{C}}(\bar{x})\left(g^{*}(\bar{y})-\langle\bar{x}, \bar{y}\rangle\right)
		\end{array}\right].
	\end{multline}
	The proof of \eqref{eq: z08072230} is given in Appendix \ref{Appendix: subgradient of Q}. It is known from the proximal step \eqref{eq: proximal-step of MM-PGSA} of AMPDA and the generalized \textit{Fermat's Rule} that
	\begin{equation}\label{eq: def w_fC}
		w_{f_{\mathcal{C}}}^{k+1} := \frac{g(x^k)}{\alpha_k}(x^k-x^{k+1}-\alpha_k \nabla h_1(x^k) +\alpha_k z^k) + c_kf(x^k)y^k
		\in\widehat{\partial} f_{\mathcal{C}}(x^{k+1}).
	\end{equation}
	Then, with the help of \eqref{eq: z08072230} and the fact that $(y^k,z^k)\in\partial g(x^k)\times\partial h_2(x^k)$, we have
	\begin{equation}\label{eq: def wk+1}
		w^{k+1} := (w_{x}^{k+1}, w_{y}^{k+1}, x^{k}-x^{k+1}, w_{c}^{k+1})\in\widehat{\partial} Q\left(x^{k+1}, y^{k}, z^{k}, c_{k+1}\right),
	\end{equation}
	where $w_{x}^{k+1}, w_{y}^{k+1}$ and $w_{c}^{k+1}$ are given as below:
	\begin{align}
		w_{x}^{k+1}&:=\left(2 c_{k+1}+c_{k+1}^{2}(g^{*}(y^{k})-\langle x^{k+1}, y^{k}\rangle)\right) w_{f_{\mathcal{C}}}^{k+1}\\
		& \qquad -c_{k+1}^{2} f_{\mathcal{C}}(x^{k+1}) y^{k}+\nabla h_{1}(x^{k+1})-z^{k}, \notag \\
		w_{y}^{k+1}&:=c_{k+1}^{2} f_{\mathcal{C}}(x^{k+1})(x^{k}-x^{k+1}),\label{eq: def wy} \\
		w_{c}^{k+1}&:=2  c_{k+1} f_{\mathcal{C}}(x^{k+1})\left(g(x^{k+1})+g^{*}(y^{k})-\langle x^{k+1}, y^{k}\rangle\right).\label{eq: def wc}
	\end{align}
	
	We then show that $\|w^{k+1}\|_2\leq b  \|x^{k+1}-x^k\|_2$ for some $b>0$. It amounts to prove that $w_{x}^{k+1}$, $w_{y}^{k+1}$ and $w_{c}^{k+1}$ can be bounded by the term $\|x^{k+1}-x^{k}\|_{2}$. By invoking Lemma \ref{Lemma: Lipschitz continuous} \StatementNum{3}, we derive from \eqref{eq: def wy} that
	\begin{equation}\label{eq: z08292200}
		\|w_{y}^{k+1}\|_{2} \leq M_{f/g^2}\|x^{k+1}-x^{k}\|_{2}.
	\end{equation}
	for all $k\in\mathbb{N}$. By substituting $g^{*}(y^{k})=\langle x^{k}, y^{k}\rangle-g(x^{k})$ into \eqref{eq: def wc}, we obtain
	$w_{c}^{k+1}=2 c_{k+1} f_{\mathcal{C}}(x^{k+1}) (g(x^{k+1})-g(x^{k})+\langle x^{k}-x^{k+1}, y^{k}\rangle).$ Invoking this and Lemma \ref{Lemma: Lipschitz continuous}, we deduce that
	\begin{equation}\label{eq: z08292201}
		\left\|w_{c}^{k+1}\right\|_{2} \leq 2 M_{f/g^2} M_g (L_{g}+M_{\partial g})\|x^{k+1}-x^{k}\|_{2}, 
	\end{equation}
	for all $k \in \mathbb{N}$, where $L_g>0$ denotes the Lipschitz modulus of $g$ on $\mathcal{X}$. We next focus on the term $w_{x}^{k+1}$. We observe that $w_{x}^{k+1} $ can be rewritten as
	\begin{multline}\label{eq: z08292129}
		w_{x}^{k+1}=  \left(2 c_{k+1}+c_{k+1}^{2}(g^{*}(y^{k})-\langle x^{k+1}, y^{k}\rangle)-c_{k}\right) w_{f_{\mathcal{C}}}^{k+1} 
		\\
		+ \left(c_{k}^{2} f_{\mathcal{C}}(x^{k})-c_{k+1}^{2} f_{\mathcal{C}}(x^{k+1})\right) y^{k}+\nabla h_{1}(x^{k+1})-\nabla h_{1}(x^{k})+\frac{1}{\alpha_k}(x^{k}-x^{k+1}).
	\end{multline}
	For the term $2 c_{k+1}+c_{k+1}^{2}\left(g^{*}(y^{k})-\langle x^{k+1}, y^{k}\rangle\right)-c_{k}$ in \eqref{eq: z08292129}, we have
	\begin{align}\label{eq: z08292136}
		& \left|2 c_{k+1}+c_{k+1}^{2}\left(g^{*}(y^{k})-\left\langle x^{k+1}, y^{k}\right\rangle\right)-c_{k}\right| \\
		&=  \left|c_{k+1}^{2} c_{k}\left(2 g(x^{k+1}) g(x^{k})-g^{2}(x^{k})-g^{2}(x^{k+1})\right)+\left\langle x^{k}-x^{k+1}, c_{k+1}^{2} y^{k}\right\rangle\right| \notag \\
		&\leq  \left|c_{k+1}^{2} c_{k}\left(g(x^{k+1})-g(x^{k})\right)^{2}\right|+\left\|c_{k+1}^{2} y^{k}\right\|_2\|x^{k+1}-x^{k}\|_{2} \notag \\
		&\leq   m_{g}^{-2}\left(2 L_{g}+M_{\partial g}\right)\|x^{k+1}-x^{k}\|_{2}, \notag
	\end{align}
	where the first equality follows from $g^*(y^k) = \langle x^{k},y^{k} \rangle - g(x^k)$. In view of boundedness of $\left\{\left(x^{k}, y^{k}, z^{k}, c_{k}\right): k \in \mathbb{N}\right\}$ (see Corollary \ref{Corollary: xyz bounded and xk+1 - xk -> 0}) 
	and $\AlphaStar\leq \alpha_{k} \leq \overline{\alpha}$ (see Proposition \ref{Lemma: well-defined of the Alg}), we deduce from \eqref{eq: def w_fC} that
	$$M_{w_{f_{\mathcal{C}}}}:=\sup \{\|w_{f_{\mathcal{C}}}^{k+1}\|_{2}: k \in \mathbb{N}\}<+\infty.$$
	Invoking Lemma \ref{Lemma: Lipschitz continuous}, we derive from \eqref{eq: z08292129} and \eqref{eq: z08292136} that
	\begin{multline}
		\|w_{x}^{k+1}\|_{2} \leq M_{w_{f_{\mathcal{C}}}} m_{g}^{-2}(2 L_{g}+M_{\partial g})\|x^{k+1}-x^{k}\|_{2} \label{eq: z08292159}\\
		\quad +\left(L_{f / g^{2}} M_{\partial g}+L_{\nabla h_{1}}+ (\AlphaStar)^{-1}\right)\|x^{k+1}-x^{k}\|_{2}, 
	\end{multline}
	for each $k\in\mathbb{N}$,	where $L_{f / g^{2}}>0$ and $L_{\nabla h_{1}}>0$ are the Lipschitz moduli of $f / g^{2}$ and $\nabla h_{1}$ on $\mathcal{X}$, respectively. 
	In view of \eqref{eq: def wk+1}, \eqref{eq: z08292200}, \eqref{eq: z08292201} and \eqref{eq: z08292159}, we conclude that \eqref{eq: z08072032} holds for each $k \in \mathbb{N}$. This completes the proof.
	\hfill $\square$ \end{proof}

Now we are ready to establish the sequential convergence of AMPDA.

\begin{theorem}\label{theorem: AMPDA global convergence when g* is continuous}
	Suppose that Assumption \ref{assumption: X is compact} holds.  If the $Q$ given by \eqref{eq: def Q} is a proper closed KL function, then the sequence $\{x^{k}: k \in \mathbb{N}\}$ generated by AMPDA converges to a critical point of \eqref{problem:root}.
\end{theorem}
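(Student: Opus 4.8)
The plan is to invoke the abstract full-sequence convergence framework of Proposition~\ref{ppsition:3.4-3}, with the auxiliary function $Q$ from \eqref{eq: def Q} in the role of $\Psi$. Concretely, I would set the abstract iterates to be $u^k := x^k$ and $v^k := (y^{k-1},z^{k-1},c_k)$, so that $\Psi(u^k,v^k) = Q(x^k,y^{k-1},z^{k-1},c_k)$ and $\|u^{k+1}-u^k\|_2 = \|x^{k+1}-x^k\|_2$. Under this identification, the sufficient descent condition \StatementNum{1} is exactly Proposition~\ref{Proposition: sufficient descent of AMPDA} (with $a = \sigma/2$), and the relative error condition \StatementNum{2} is exactly Proposition~\ref{Proposition: relative error conditions of AMPDA} (with the constant $b$ produced there). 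Moreover, the boundedness of $\{(x^k,y^k,z^k,c_k):k\in\mathbb{N}\}$ from Corollary~\ref{Corollary: xyz bounded and xk+1 - xk -> 0}~\StatementNum{1} ensures that $\{(u^k,v^k):k\in\mathbb{N}\}$ is bounded, as the proposition requires.

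The crux is the continuity condition \StatementNum{3}. First I would show that $\Psi_{\infty} := \lim_{k\to\infty} Q(x^{k+1},y^k,z^k,c_{k+1})$ exists and equals $F_{\infty}$. This follows by a squeeze: re-indexing the line-search rule \eqref{Linesearch: Rule strong} gives $Q(x^{k+1},y^k,z^k,c_{k+1}) \le F(x^k)$, while \eqref{eq: relation of F and Q} (valid because $(y^k,z^k)\in\partial g(x^k)\times\partial h_2(x^k)\subseteq\dom(g^*)\times\dom(h_2^*)$ and $c_{k+1}=1/g(x^{k+1})$) gives $F(x^{k+1}) \le Q(x^{k+1},y^k,z^k,c_{k+1})$, so that
\[
F(x^{k+1}) \le Q(x^{k+1},y^k,z^k,c_{k+1}) \le F(x^k);
\]
since $F(x^k)\to F_{\infty}$ by Corollary~\ref{Corollary: xyz bounded and xk+1 - xk -> 0}~\StatementNum{2}, the middle term converges to $F_{\infty}$ as well.

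Next I would verify $\Psi\equiv\Psi_{\infty}$ on the accumulation set $\Upsilon$. Let $(x^{\star},y^{\star},z^{\star},c_{\star})\in\Upsilon$ be reached along a subsequence $\mathcal{K}$. Because $\|x^k-x^{k-1}\|_2\to0$ (Corollary~\ref{Corollary: xyz bounded and xk+1 - xk -> 0}~\StatementNum{2}), the shift $x^{k-1}$ also converges to $x^{\star}$ along $\mathcal{K}$; combined with $(y^{k-1},z^{k-1})\in\partial g(x^{k-1})\times\partial h_2(x^{k-1})$ and the outer semicontinuity of the convex subdifferentials $\partial g$ and $\partial h_2$, this yields $(y^{\star},z^{\star})\in\partial g(x^{\star})\times\partial h_2(x^{\star})$, while continuity of $g$ gives $c_{\star}=1/g(x^{\star})$. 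The equality case of \eqref{eq: relation of F and Q} then gives $Q(x^{\star},y^{\star},z^{\star},c_{\star}) = F(x^{\star})$, and the continuity of $F$ on $\mathcal{X}$ forces $F(x^{\star}) = F_{\infty}$. Hence $Q\equiv F_{\infty}=\Psi_{\infty}$ on $\Upsilon$, which is condition \StatementNum{3}. The same relative-error estimate together with a limiting-subdifferential argument also shows $0\in\partial Q$ at each point of $\Upsilon$, so that $\Upsilon\subseteq\dom(\partial Q)$ and the assumed KL property of $Q$ applies at every point of $\Upsilon$.

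With all three conditions verified, Proposition~\ref{ppsition:3.4-3} yields $\sum_{k}\|x^{k+1}-x^k\|_2<+\infty$ and the convergence of the whole sequence $x^k\to x^{\star}$ for some $(x^{\star},\cdot)\in\Upsilon$. Since $x^{\star}$ is in particular an accumulation point of $\{x^k\}$, Theorem~\ref{Theorem: subsequential convergence} identifies it as a critical point of $F$, i.e. of problem \eqref{problem:root}. I expect the main obstacle to be the continuity condition \StatementNum{3}: the index bookkeeping is delicate, since the subgradients $(y^{k-1},z^{k-1})$ live at $x^{k-1}$ rather than at $x^{\star}$, so relocating them as genuine subgradients at $x^{\star}$ relies on coupling the step-size decay $\|x^k-x^{k-1}\|_2\to0$ with outer semicontinuity, and pinning down the common value $F_{\infty}$ on $\Upsilon$ requires the simultaneous use of the equality case of \eqref{eq: relation of F and Q} and the continuity of $F$.
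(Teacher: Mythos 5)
Your proposal is correct and follows essentially the same route as the paper's proof: invoking Proposition~\ref{ppsition:3.4-3} with $\Psi=Q$, citing Propositions~\ref{Proposition: sufficient descent of AMPDA} and~\ref{Proposition: relative error conditions of AMPDA} for the descent and relative-error conditions, and verifying the continuity condition by the squeeze $F(x^{k+1})\leq Q(x^{k+1},y^k,z^k,c_{k+1})\leq F(x^k)$ together with the index shift, the outer semicontinuity of $\partial g$ and $\partial h_2$, and the equality case of \eqref{eq: relation of F and Q}, before concluding via Theorem~\ref{Theorem: subsequential convergence}. The only cosmetic difference is your one-step re-indexing of the abstract iterates, which matches the paper's choice of tracking $\{(x^{k+1},y^k,z^k,c_{k+1})\}$.
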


\begin{proof}
	Let the solution sequence $\{(x^{k}, y^{k}, z^{k}, c_{k}) : k \in \mathbb{N}\}$ be generated by AMPDA. 
	We prove the convergence of the sequence $\{x^{k} : k \in \mathbb{N}\}$ by verifying all the conditions in Proposition \ref{ppsition:3.4-3}. Corollary \ref{Corollary: xyz bounded and xk+1 - xk -> 0} \StatementNum{1} shows the boundedness of the solution sequence $\{(x^{k}, y^{k}, z^{k}, c_{k}) : k \in \mathbb{N}\}$, and Propositions \ref{Proposition: sufficient descent of AMPDA} and \ref{Proposition: relative error conditions of AMPDA} respectively show that the \textit{Sufficient Descent Condition} and the \textit{Relative Error Condition} hold for the solution sequence. Thus, it remains only to verify the \textit{Continuity Condition}. 
	
	To verify the \textit{Continuity Condition}, it suffices to show that the limit $Q_{\infty} := \lim_{k\to\infty}Q(x^{k+1},y^k,z^k,c_{k+1})$ exists and $Q\equiv Q_{\infty}$ holds on the set of all accumulation point of the solution sequence. In view of Step 2 of AMPDA and \eqref{eq: relation of F and Q}, we have
	$	F(x^{k+1})\leq Q(x^{k+1}, y^{k}, z^{k}, c_{k+1}) \leq F(x^k).$
	This together with Corollary \ref{Corollary: xyz bounded and xk+1 - xk -> 0} \StatementNum{2} leads to the existence of $Q_{\infty}$ and $Q_{\infty} = F_{\infty}$. 
	We then let $(x^{\star}, y^{\star}, z^{\star}, c_{\star})$ be an accumulation point of $\{(x^{k+1}, y^{k}, z^{k}, c_{k+1}): k \in \mathbb{N}\}$, and $\{(x^{k_{j}+1}, y^{k_{j}}, z^{k_{j}}, c_{k_{j}+1}): j \in \mathbb{N}\}$ be the subsequence converging to $(x^{\star}, y^{\star}, z^{\star}, c_{\star})$. 
	Firstly, we have $c_{\star} = \lim _{j \to \infty} c_{k_{j}+1}=\lim _{j \to \infty} 1 / g(x^{k_{j}+1})=1 / g(x^{\star})$ due to the continuity of $1 / g$ on $\mathcal{X}$. 
	Secondly, we know that $ x^{k_{j}} \to x^{\star}$ thanks to $ x^{k_{j}+1} \to x^{\star}$ and \eqref{eq: z05201744}. Thirdly, $(y^{\star}, z^{\star}) \in \partial g(x^{\star}) \times \partial h_{2}(x^{\star})$ follows from $ (y^{k_{j}},z^{k_{j}}) \to (y^{\star},z^{\star})$ with $ (y^{k_{j}},z^{k_{j}})\in \partial g(x^{k_{j}}) \times \partial h_{2}(x^{k_{j}})$ for each $j \in \mathbb{N}$. 
	Consequently, the \textit{Continuity Condition} follows from
	\begin{equation*}\label{eq: z08262129}
		Q(x^{\star}, y^{\star}, z^{\star}, c_{\star})  
		\overset{\StatementNumUpperCase{1}}{=}F(x^{\star})\overset{\StatementNumUpperCase{2}}{=}\lim _{j \to \infty} F(x^{k_{j}+1})\overset{\StatementNumUpperCase{3}}{=}F_{\infty} = Q_{\infty},
	\end{equation*}
	where $\StatementNumUpperCase{1}$ follows from \eqref{eq: relation of F and Q}, $\StatementNumUpperCase{2}$ holds since $F$ is continuous on the compact $\mathcal{X}\subseteq\Omega$, and $\StatementNumUpperCase{3}$ follows from Corollary \ref{Corollary: xyz bounded and xk+1 - xk -> 0} \StatementNum{2}.

	Combining the above arguments, we conclude that the sequence $\{x^{k} : k \in \mathbb{N}\}$ is convergent. Together with Theorem \ref{Theorem: subsequential convergence}, we further conclude that the sequence $\{x^{k} : k \in \mathbb{N}\}$ converges to a critical point of $F$. \hfill $\square$ \end{proof}

We further note that, if in Definition \ref{Def:KL_property} the function  $\phi$ can be chosen as $\phi(z) = a_0 z^{1-\theta}$ for some $a_0>0$ and $\theta \in [0,1)$, then $\varphi$ is said to satisfy the KL property with the exponent $\theta$. It is known from \cite[Corollary 16]{Bolte-Daniilidis-Lewis-Shiota:2007SIAM-OPT} that every proper closed semialgebraic function satisfies the KL property with some exponent $\theta \in [0,1)$.
Similar to the analysis in \cite{Attouch-Bolte:MP2009,Wen-Chen-Pong:2018COA,Xu-Yin:SIAMIS:13}, the convergence rate of the sequence $\{x^{k} : k \in \mathbb{N}\}$ generated by AMPDA can be estimated by the potential KL exponent of $Q$, and the following statements hold:
\begin{enumerate}[\upshape(\romannumeral 1 )]
	\item If $\theta = 0$, then the sequence converges in finitely many steps;
	\item If $\theta \in (0,1/2]$, then the sequence converges R-linearly;
	\item If $\theta \in (1/2,1)$, then the convergence is sublinear.
\end{enumerate}

Moreover, by \cite[Theorem 13.2]{Rockafellar:70}, we know that, if $g$ is positively homogeneous (i.e., $g(\alpha x) = \alpha g(x)$ holds for all $\alpha>0$ and $x\in\mathbb{R}^n$), the conjugate $g^*$ is an indicator function of a closed convex set. Using this, for problem \eqref{problem:root} we have that $c^2 f(x)g^*(y) = g^*(y)$, and 
the corresponding auxiliary function given by \eqref{eq: def Q} takes the form
\begin{multline}\label{eq: z08292229}
	Q(x, y, z, c) =  \iota_{\mathcal{C}}(x)+2 c f(x) +g^*(y)\\ -c^{2}f(x) \langle x, y\rangle 
	+h_1(x)+h_{2}^{*}(z)-\langle x, z\rangle.
\end{multline}
According to the discussions on semialgebraic functions at the end of Section \ref{section: KL property and semialgebra}, we see from \eqref{eq: z08292229} that, if the components $f$, $g$, $h_1$, $h_2$ and the constraint set $\mathcal{C}$ are semialgebraic, then $Q$ is a proper closed semialgebraic function, and hence a KL function. 
Combining this with the discussions right after Proposition \ref{Prop: sufficent for X closed}, we finally invoke Theorem \ref{theorem: AMPDA global convergence when g* is continuous} to derive the following convergence result for AMPDA when applied to the signal recovery models \eqref{problem: L1dL2} and \eqref{problem: L1dSK}.

\begin{theorem}\label{Prop: AMPDA is globally convergent for solving L1dL2}
	Consider problems \eqref{problem: L1dL2} and \eqref{problem: L1dSK}. Then the sequence $\{x^{k}: k \in \mathbb{N}\}$ generated by AMPDA for each of them converges to a critical point of the respective problem if the initial point $x^{0}$ satisfies \eqref{eq: requirement of x0}.
\end{theorem}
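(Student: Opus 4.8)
The result should follow by specializing Theorem~\ref{theorem: AMPDA global convergence when g* is continuous} to the two models, so the whole task reduces to checking its two hypotheses for the data $f=\|\cdot\|_1$, $g=\|\cdot\|_2$ or $\|\cdot\|_{(K)}$, $h_1=\frac{\lambda}{2}\|A\cdot-b\|_2^2$, $h_2=\frac{\lambda}{2}\|\mathcal{T}_\mu(A\cdot-b)\|_2^2$, and $\mathcal{C}=\{x:\underline{x}\le x\le\overline{x}\}$: namely (a) Assumption~\ref{assumption: X is compact}, and (b) that the auxiliary function $Q$ in \eqref{eq: def Q} is a proper closed KL function. Since the only difference between the two models is the denominator $g$, and both $\|\cdot\|_2$ and $\|\cdot\|_{(K)}$ are positively homogeneous norms, the plan is to treat them simultaneously and then invoke the theorem once.

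For (a), the box $\mathcal{C}$ is compact, so $\mathcal{X}$ is bounded irrespective of $x^0$. For closedness I would invoke Proposition~\ref{Prop: sufficent for X closed}: because $\|x\|_1=0$ forces $x=0$, the set on which $f$ and $g$ vanish simultaneously is exactly $\{0\}$, and the abstract requirement \eqref{eq: sufficient condition for the closedness of X} becomes the concrete condition \eqref{eq: requirement of x0} once one uses $\Liminf_{z\to 0}F(z)=1+\frac{\lambda}{2}\|b-\mathcal{T}_\mu(b)\|_2^2$ (valid because $\|z\|_1/g(z)\ge 1$ near the origin with liminf $1$, while $h$ is continuous at $0$). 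Thus, whenever $x^0$ satisfies \eqref{eq: requirement of x0}, the level set $\mathcal{X}$ is compact and Assumption~\ref{assumption: X is compact} holds; this is precisely the reduction already recorded in the discussion following \eqref{eq: requirement of x0}.

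For (b), I would first exploit the positive homogeneity of $g$: by \cite[Theorem 13.2]{Rockafellar:70} its conjugate $g^*$ is the indicator of a closed convex set, so the product term $c^2 f(x)g^*(y)$ in \eqref{eq: def Q} collapses to $g^*(y)$ and $Q$ takes the explicit form \eqref{eq: z08292229}. It then remains to argue that each ingredient is semialgebraic: $\|\cdot\|_1$ is piecewise linear; $\|\cdot\|_2$ has the semialgebraic graph $\{(x,s):s\ge 0,\ s^2=\sum_i x_i^2\}$; $\|\cdot\|_{(K)}$ is a finite maximum of linear functions; $h_1$ is a polynomial; $h_2$ is a finite maximum, over the choices of retained index set, of sums of squares precomposed with the affine map $x\mapsto Ax-b$; and $\mathcal{C}$ is polyhedral. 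Invoking stability of the semialgebraic class under finite sums, products, affine composition, and conjugation (the discussion closing Section~\ref{section: KL property and semialgebra}), the function in \eqref{eq: z08292229} is proper closed semialgebraic, and hence a KL function.

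With (a) and (b) verified, Theorem~\ref{theorem: AMPDA global convergence when g* is continuous} immediately yields that $\{x^k:k\in\mathbb{N}\}$ converges to a critical point of the respective problem. I expect the substantive content to lie not in this final assembly but in the two reductions feeding it: pinning down that \eqref{eq: requirement of x0} is exactly the instance of \eqref{eq: sufficient condition for the closedness of X} relevant here (which rests on identifying the common-zero set as $\{0\}$ and evaluating the liminf of $F$ at the origin), and justifying that positive homogeneity legitimately replaces $c^2 f(x)g^*(y)$ by $g^*(y)$ so that the clean semialgebraic representation \eqref{eq: z08292229} is available. Once these are secured, the semialgebraicity checks for $\|\cdot\|_{(K)}$ and $h_2$ are routine but must still be spelled out.
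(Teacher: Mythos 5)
Your proposal is correct and follows essentially the same route as the paper: the paper's own justification is exactly the discussion preceding the theorem, namely verifying Assumption~\ref{assumption: X is compact} via the bounded box constraint together with Proposition~\ref{Prop: sufficent for X closed} (with the common-zero set $\{0\}$ and the liminf value in \eqref{eq: requirement of x0}), and verifying the KL hypothesis by using positive homogeneity of $g$ to reduce $Q$ to the form \eqref{eq: z08292229} and then appealing to semialgebraicity, before invoking Theorem~\ref{theorem: AMPDA global convergence when g* is continuous}. Your identification of the two reductions as the substantive content, and your semialgebraicity checks for $\|\cdot\|_{(K)}$ and $h_2$, match the paper's argument.
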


\section{Numerical experiments} \label{section: Numerical experiments}

In this section, we conduct some preliminary numerical experiments to evaluate the performance of our proposed AMPDA. All the experiments are conducted in MATLAB R2025a on a desktop equipped with an Intel Core i5-9500 CPU (3.00 GHz) and 16GB of RAM.

Throughout the experiments, the parameters of the proposed AMPDA are set as follows: we set $\underline{\alpha}=10^{-4}$, $\overline{\alpha}=10^{4}$, $\sigma=10^{-5}$, and $\gamma=1 /2$. we update the $\widetilde{\alpha}_{k}$ in Step 1 of AMPDA as
\begin{equation} \label{eq: def ak0}
	\widetilde{\alpha}_{k}:=\max \left\{\underline{\alpha}, \min \left\{\overline{\alpha}, \frac{\|\Delta x^{k}\|_{2}^{2}}{|\langle\Delta x^{k}, \Delta h_{1}^{k}\rangle|}\right\}\right\},
\end{equation}
where $\Delta x^{k}=x^{k}-x^{k-1}$ and $\Delta h_{1}^{k}=\nabla h_{1}(x^{k})-\nabla h_{1}(x^{k-1})$ if $k \geq 1$ and $|\langle\Delta x^{k}, \Delta h_{1}^{k}\rangle| \neq 0$; otherwise, we set $\widetilde{\alpha}_{k}=1$. The attempted stepsize in \eqref{eq: def ak0} is designed following the original idea of the Barzilai–Borwein (BB) spectral method \cite{Barzilai-Borwein:IJNA1998}. Stepsize strategies of this BB type are well known to deliver remarkable practical efficiency in numerous optimization algorithms; see, for example,
\cite{Lu-Xiao:SIAM2017,Raydan:1997SIOPT,Wright-Nowak-Figueiredo:IEEESP-09}.

For problems \eqref{problem: L1dL2} and \eqref{problem: L1dSK}, the initial point of all the tested algorithms is chosen as follows. Let $T \in \mathbb{R}^{m\times m}$ be the diagonal matrix whose $j$-th ($j=1,2,...,m$) diagonal entry is set to $T_{j,j} = 1$ if $(\mathcal{T}_{\mu}(b))_{j}\neq 0$, and $T_{j,j} = 0$ otherwise. We select $i_{\star} \in \arg\max\{|(b-T b)^{\top} a_{i}|: i \in\{1,2, \cdots, n\}\}$, where $a_i$ denotes the $i$-th column of $A$, compute
\begin{equation}\label{eq: z11151646}
	\hat{\theta}:=\frac{(b-Tb)^{\top}a_{i_{\star}}}
	{\|a_{i_{\star}}\|_{2}^{2}-\|T  a_{i_{\star}}\|_{2}^{2}},
\end{equation}
and then define the initial point $x^{0}$ by 
\begin{equation}\label{eq: def x0 in exp}
	x_{i}^{0}= \begin{cases}\arg\min\left\{|\theta-\hat{\theta}|: \theta \in[\underline{x}_{i_{\star}}, \overline{x}_{i_{\star}}]\right\}, & \text { if } i=i_{\star}, \\ 0, & \text { else. }\end{cases}
\end{equation}
In Appendix \ref{Appendix:x0}, we show that this choice of $x^{0}$ satisfies the condition \eqref{eq: requirement of x0} for both problems \eqref{problem: L1dL2} and \eqref{problem: L1dSK}, provided that  $A\in\mathbb{R}^{m\times n}~(m<n)$ has full row-rank, $b\notin \mathcal{S}_{\mu}$, and $\underline{x}<0<\overline{x}$. All the tested algorithms are terminated once
\begin{equation}\label{eq: alg_terminate}
	\frac{\|x^{k}-x^{k-1}\|_{2}}{ \| x^{k}\|_{2}}<10^{-6} .
\end{equation}

\subsection{Numerical results on the $L_1/L_2$-regularized least squares problem}
In this subsection, we applied AMPDA for solving the $L_1/L_2$-regularized least squares problem
\begin{equation}\label{problem: L1dL2-LS}
	\min \left\{\frac{\|x\|_{1}}{\|x\|_{2}}+\frac{\lambda}{2} \|Ax-b\|_2^2 : x \neq 0,~ \underline{x} \leq x \leq \overline{x},~ x \in \mathbb{R}^{n}\right\},
\end{equation}
which is a special case of problem \eqref{problem: L1dL2} with $\mu=0$. The test instances $(A,b)$ are obtained from the datasets leukemia, colon-cancer, and mpg in the \textit{LIBSVM} repository\footnote{\url{https://www.csie.ntu.edu.tw/\~cjlin/libsvmtools/datasets/}}. In particular, as suggested in \cite{Huang-Jia-Yu:2010NIPS} and following the approach in \cite[Section 4.1]{Li-Sun-Toh:2018SIOPT}, we expand the original features of the dataset mpg by an order 7 polynomial. For problem \eqref{problem: L1dL2-LS}, we fix $\lambda = 1$ and set the box constraint as $\underline{x} = -10 \cdot  \One_n$ and $\overline{x} = 10 \cdot \One_n$, where $\One_n$ denotes the $n$-dimensional vector of all ones.

We compare AMPDA with the following two methods.
The first method is the alternating direction method of multipliers (ADMM) \cite[Algorithm~3.2]{Wang-Tao-Nagy-Lou:2021SIAM-ImageScience}, which is tailored for the problem \eqref{problem: L1dL2-LS}.
Following the notation in \cite{Wang-Tao-Nagy-Lou:2021SIAM-ImageScience}, we set $\rho_1=\rho_2=\beta=\rho$, $\overline{\epsilon} = 10^{-6}$, and $\texttt{j\_max} = 5$.
For better performance, we set $\rho=2$ for leukemia, $\rho=50$ for mpg, and $\rho=10$ for colon-cancer.
The second one is the gradient descent flow method (GDFA) \cite[Algorithm~1]{Wang-Aujol-Gilboa-Lou:2024IPI}.
Using the same notation as in \cite[Algorithm~1]{Wang-Aujol-Gilboa-Lou:2024IPI}, we set $\beta = 10^{-6}$ and $\texttt{jMax}=5$.
To obtain satisfactory numerical performance, we set $\rho=5$ for leukemia, $\rho=50$ for mpg, and $\rho=10$ for colon-cancer.
We remark that the parameters of the proposed AMPDA follow the settings described at the beginning of Section \ref{section: Numerical experiments} and remain unchanged for all datasets.


Table \ref{tab:L1L2-LS-results} summarizes the computational results together with the dimensions of the matrices $A$ constructed from the three datasets.
For each method, we report the objective value (Obj), the number of iterations (Iter), and the computational time in seconds (Time), where the computational time is decomposed into two parts: the startup phase (reported in parentheses) and the iterative phase.
Unlike the proposed AMPDA, which starts almost instantaneously, both GDFA and ADMM require longer startup times.
This is mainly because these two methods perform Cholesky factorizations of certain matrices before the iterations begin, in order to accelerate solving the linear systems during the iterative phase.
From Table~\ref{tab:L1L2-LS-results}, we observe that all three algorithms eventually converge to solutions of similar objective value on all tested datasets, while AMPDA consistently requires the least computational time.
Figure \ref{fig:L1L2-LS-curves} plots the objective value versus the computational time during the early stage of the iterative phase.
From these curves, we observe that AMPDA consistently exhibits the fastest convergence among all methods.
These results clearly demonstrate the superior effectiveness of AMPDA for solving the $L_1/L_2$-regularized least squares problems considered in this subsection.

\begin{table}[htbp]
	\centering
	\caption{Results for the $L_1/L_2$-regularized least squares problem \eqref{problem: L1dL2-LS}.}
	\label{tab:L1L2-LS-results}
	\begin{tabularx}{\linewidth}{
			>{\raggedright\arraybackslash}p{0.22\linewidth} 
			>{\centering\arraybackslash}p{0.2\linewidth}   
			>{\centering\arraybackslash}X	
			>{\centering\arraybackslash}X 
			>{\centering\arraybackslash}p{0.2\linewidth}  
		}
		\toprule
		Dataset & Alg & Obj & Iter & Time \\
		\midrule
		leukemia        & AMPDA & 2.375 & 2856 & 4.35 (0.00) \\
		$m=38, n=7129$ & ADMM  & 2.379 &  994 & 268.84 (4.70) \\
		& GDFA  & 2.375 & 1575 & 286.75 (1.66) \\
		& & & &\\
		colon-cancer    & AMPDA & 5.347 & 6817 & 1.99 (0.00) \\
		$m=62, n=2000$ & ADMM  & 5.345 & 2843 & 82.03 (0.27) \\
		& GDFA  & 5.345 & 3380 & 54.80 (0.12) \\
		& & & &\\
		mpg            & AMPDA & 1.101 &  299 & 0.36 (0.00) \\
		$m=392, n=3432$& ADMM  & 1.103 &  116 & 8.87 (0.61) \\
		& GDFA  & 1.101 &  181 & 7.55 (0.16) \\
		\bottomrule
	\end{tabularx}
\end{table}

\begin{figure}[htbp]
	\centering
	\begin{subfigure}{0.9\textwidth}
		\centering
		\includegraphics[width=\textwidth]{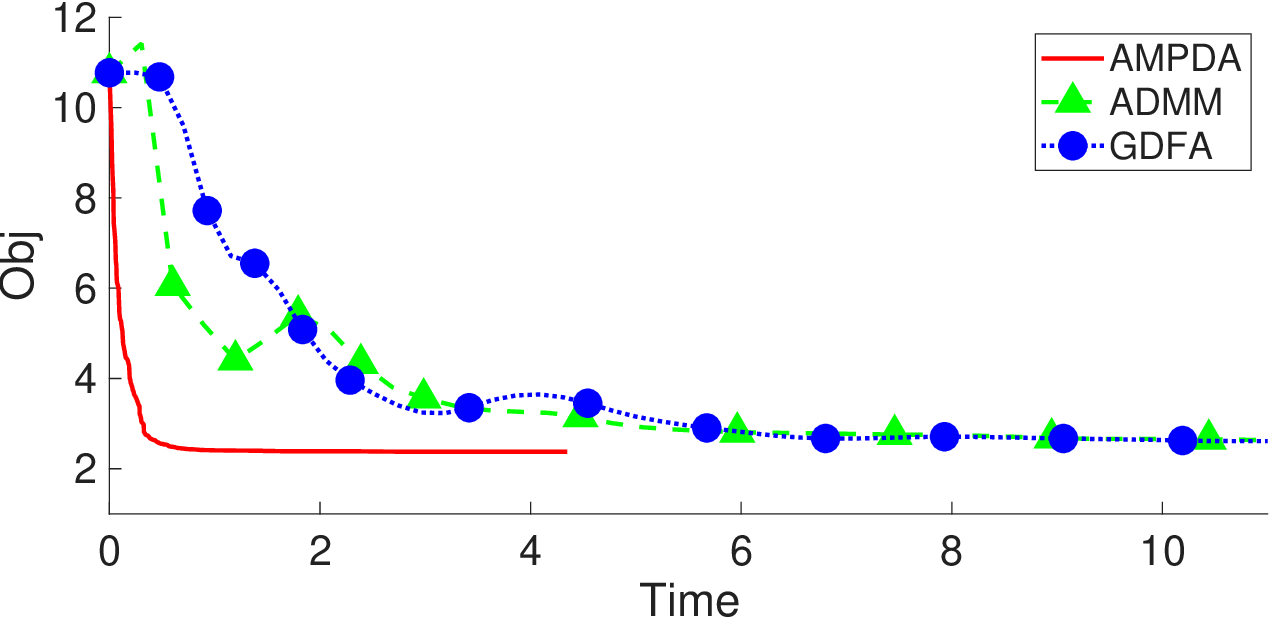}
		\caption{leukemia}
		\label{fig:L1L2-LS-leukemia}
	\end{subfigure}
	\hfill
	\begin{subfigure}{0.9\textwidth}
		\centering
		\includegraphics[width=\textwidth]{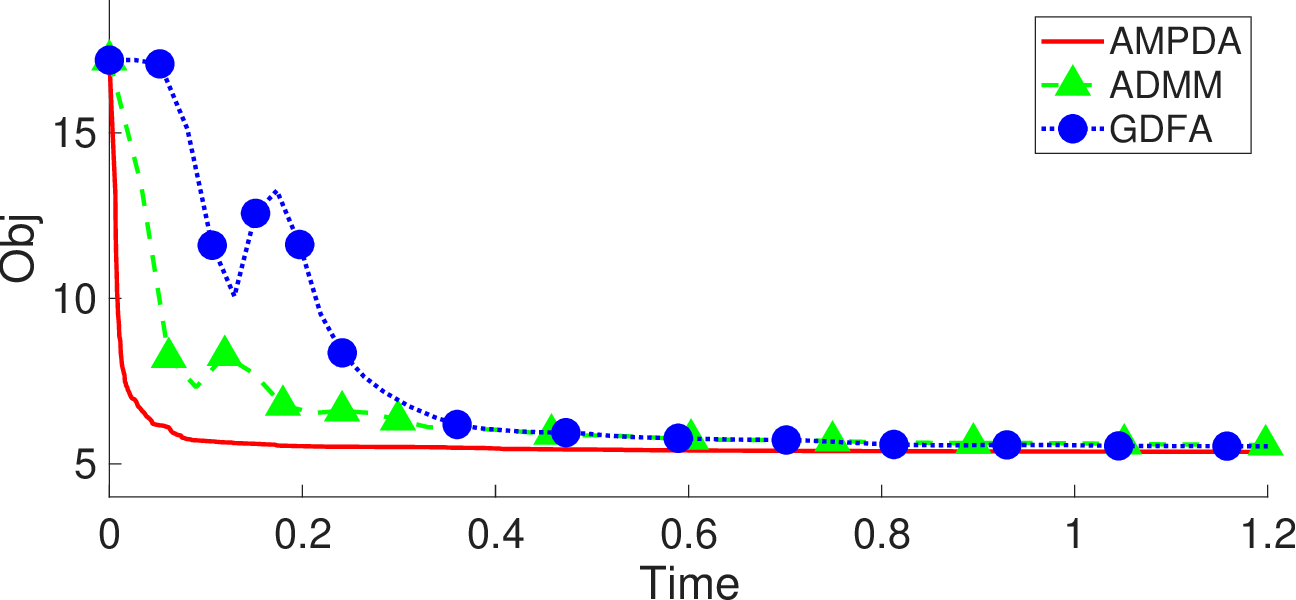}
		\caption{colon-cancer}
		\label{fig:L1L2-LS-colon}
	\end{subfigure}
	\hfill
	\begin{subfigure}{0.9\textwidth}
		\centering
		\includegraphics[width=\textwidth]{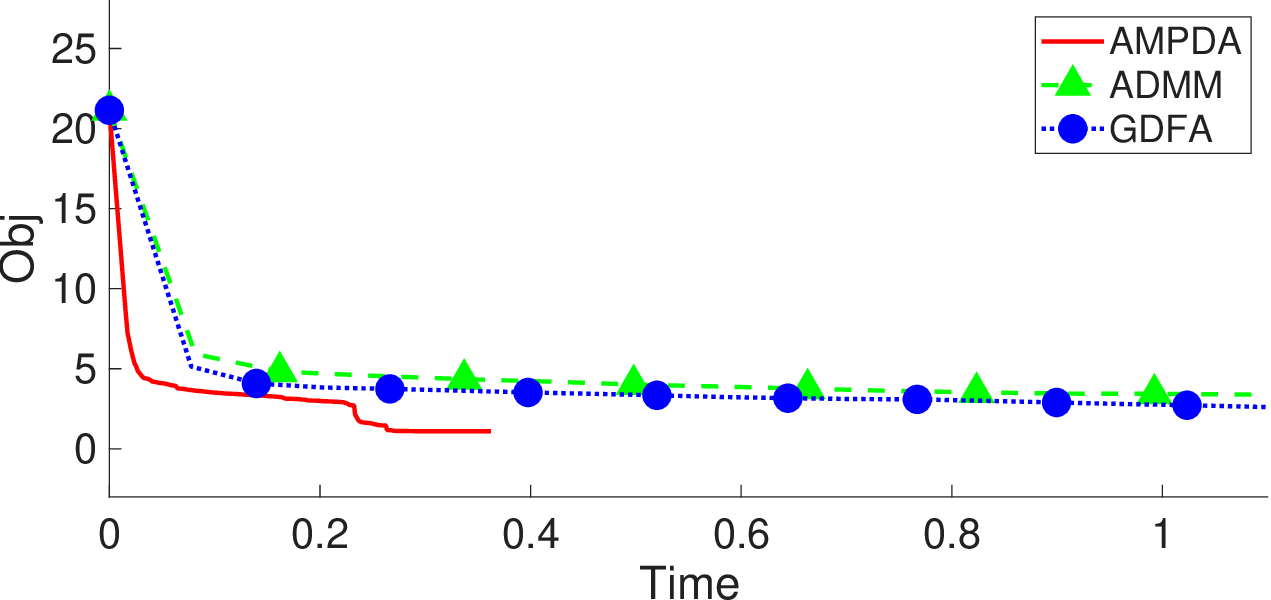}
		\caption{mpg}
		\label{fig:L1L2-LS-mpg7}
	\end{subfigure}
	\caption{
		Objective value versus the computational time during the early stage of the iterative phase (excluding the startup phase) for problem \eqref{problem: L1dL2-LS} on the three datasets.
	}
	\label{fig:L1L2-LS-curves}
\end{figure}

\subsection{Numerical results on the robust signal recovery problem}

In this subsection, we applied AMPDA for solving problems \eqref{problem: L1dL2} and \eqref{problem: L1dSK} with $\mu > 0$. 
Following similar settings to \cite[Section 7.1]{Zeng-Yu-Pong:2021SIAM-OPT}, we generate a sensing matrix $A \in \mathbb{R}^{m \times n}$ with i.i.d. standard Gaussian entries and then normalize each column of $A$. Next, a subset of size $K^{\dagger}$ is chosen uniformly at random from $\{1,2, \cdots, n\}$ and a $K^{\dagger}$-sparse vector $x^{\dagger} \in \mathbb{R}^{n}$ with i.i.d. standard Gaussian entries on this subset is generated as the original true signal. The lower bound $\underline{x}$ and the upper bound $\overline{x}$ are set to be $-\max\{5,\|x^{\dagger}\|_{\infty} \} \One_n$ and $\max\{5,\|x^{\dagger}\|_{\infty} \} \One_n$, respectively, where $\One_n$ denotes the $n$-dimensional vector with all entries being $1$. 
We also produce $\widetilde{z} \in \mathbb{R}^{m}$ as a $\mu^{\dagger}$-sparse vector with $\mu^{\dagger}$ i.i.d. standard Gaussian entries at random (uniformly chosen) positions and set $z=2 \sign(\widetilde{z})$. Finally,
the measurement $b \in \mathbb{R}^{m}$ is generated as $b=A x^{\dagger}-z+0.01 \epsilon$, where $\epsilon \in \mathbb{R}^{m}$ has i.i.d. standard Gaussian entries.

In our numerical tests, we set $(n, m, K^{\dagger}, \mu^{\dagger})=(1280 R, 365 R, 40 R, 5 R)$ with $R \in\{1,2,3,4\}$. For each value of $R$, we generate 50 random instances as described above.
The model parameter $\lambda$ is set to $5$ and $0.5$ for problems \eqref{problem: L1dL2} and \eqref{problem: L1dSK}, respectively. We set $K=\lceil 1.3 K^{\dagger}\rceil$ for problem \eqref{problem: L1dSK}, and $\mu=\lceil 1.3 \mu^{\dagger} \rceil$ for both problems. To assess the performance of AMPDA, we also include a comparison with the gradient descent flow algorithm (GDFA) \cite[Algorithm~1]{Wang-Aujol-Gilboa-Lou:2024IPI}. Using the same notation as in \cite[Algorithm~1]{Wang-Aujol-Gilboa-Lou:2024IPI}, we set the GDFA parameters in this experiment to $\rho = 2$, $\beta = 10^{-6}$, and $\texttt{jMax} = 5$.


\begin{table}[htbp]
	\caption{Results for solving the robust signal recovery problem \eqref{problem: L1dL2}.}
	\begin{tabularx}{\linewidth}{
		>{\raggedright\arraybackslash}p{0.07\linewidth}
		>{\centering\arraybackslash}p{0.08\linewidth}
		>{\centering\arraybackslash}p{0.16\linewidth}
		>{\centering\arraybackslash}p{0.07\linewidth}
		>{\centering\arraybackslash}p{0.19\linewidth}
		>{\centering\arraybackslash}X
	}
		\toprule
		& Alg & Time & Iter & Obj & RecErr \\
		\midrule
		$R=1$ & AMPDA & 0.007(0.012) & 39(7) & 5.087(2.28E-01) & 2.73E-02(7.21E-03) \\
		& GDFA  & 0.338(0.130) & 32(5) & 5.087(2.29E-01) & 2.73E-02(7.14E-03) \\
		&       &             &       &                &                  \\
		$R=2$ & AMPDA & 0.049(0.008) & 50(6) & 7.260(1.98E-01) & 2.29E-02(3.54E-03) \\
		& GDFA  & 1.175(0.082) & 36(3) & 7.260(1.98E-01) & 2.29E-02(3.51E-03) \\
		&       &             &       &                &                  \\
		$R=3$ & AMPDA & 0.169(0.025) & 56(6) & 8.901(2.03E-01) & 2.17E-02(1.77E-03) \\
		& GDFA  & 2.823(0.158) & 41(3) & 8.901(2.03E-01) & 2.16E-02(1.76E-03) \\
		&       &             &       &                &                  \\
		$R=4$ & AMPDA & 0.336(0.032) & 64(6) & 10.359(1.99E-01) & 2.08E-02(1.47E-03) \\
		& GDFA  & 5.375(0.287) & 45(3) & 10.359(1.99E-01) & 2.07E-02(1.49E-03) \\
		\bottomrule
	\end{tabularx}
	\label{Table: exp result of L1dL2}%
\end{table}%

\begin{table}[htbp]
	\caption{Results for solving the robust signal recovery problem \eqref{problem: L1dSK}.}
	\begin{tabularx}{\linewidth}{
			>{\raggedright\arraybackslash}p{0.07\linewidth}
			>{\centering\arraybackslash}p{0.08\linewidth}
			>{\centering\arraybackslash}p{0.16\linewidth}
			>{\centering\arraybackslash}p{0.07\linewidth}
			>{\centering\arraybackslash}p{0.19\linewidth}
			>{\centering\arraybackslash}X
		}
		\toprule
		& Alg & Time & Iter & Obj & RecErr \\
		\midrule
		$R=1$ & AMPDA & 0.004(0.001)   & 30(4)   & 1.008(9.20E-04)  & 1.77E-02(5.25E-03) \\
		& GDFA  & 1.098(0.108)   & 143(14) & 1.008(1.09E-03) & 1.78E-02(5.67E-03) \\
		&       &               &          &                &                    \\
		$R=2$ & AMPDA & 0.040(0.007)   & 42(5)   & 1.014(7.50E-04) & 1.73E-02(1.81E-03) \\
		& GDFA  & 5.314(0.559)   & 179(16) & 1.014(7.82E-04) & 1.59E-02(1.96E-03) \\
		&       &               &          &                &                    \\
		$R=3$ & AMPDA & 0.153(0.020)   & 54(7)   & 1.019(1.13E-03) & 1.97E-02(1.18E-03) \\
		& GDFA  & 16.403(2.106)  & 248(33) & 1.019(1.00E-03) & 1.98E-02(1.22E-03) \\
		&       &               &          &                &                    \\
		$R=4$ & AMPDA & 0.366(0.044)   & 73(9)   & 1.023(9.53E-04) & 2.09E-02(1.10E-03) \\
		& GDFA  & 36.253(5.024)  & 320(48) & 1.023(9.00E-04) & 2.11E-02(1.07E-03) \\
		\bottomrule
	\end{tabularx}
	\label{Table: exp result of L1dSK}%
\end{table}%

The computational results are summarized in Table \ref{Table: exp result of L1dL2} and Table \ref{Table: exp result of L1dSK}. For each method, we report the average computational time in seconds (Time), the number of iterations (Iter), the objective value (Obj), and the recovery error $\text{RecErr}=\|\widehat{x}-x^{\dagger}\|_{2} /\|x^{\dagger}\|_{2}$, where  $\widehat{x}$ denotes the solution returned by the algorithm. In addition, the standard deviation over the 50 random instances is reported in parentheses next to each corresponding average. 
From these results, one can observe that AMPDA achieves objective values and recovery errors comparable to those of GDFA, while requiring substantially less computational time across all problem sizes $R\in\{1,2,3,4\}$. 
Furthermore, the standard deviations indicate that the performance of AMPDA is consistently stable over all test instances.
These results demonstrate the efficiency of the proposed AMPDA for solving the scale-invariant sparse signal recovery models \eqref{problem: L1dL2} and \eqref{problem: L1dSK}.

\appendix
\section{The proof of \eqref{eq: z08072230}}\label{Appendix: subgradient of Q}
In order to prove  \eqref{eq: z08072230}, we first show the following proposition, which concerns the generalized product rule of the Fr{\'e}chet subdifferential.

\begin{proposition}\label{Prop: Product Rule of F-subdiff}
	Let $\varphi_{l}, \varphi_{c}, \varphi_{r}: \mathbb{R}^{n} \to  (-\infty,+\infty]$, and
	define $\Phi: \mathbb{R}^{n} \rightarrow(-\infty,+\infty]$ as follows:
	\begin{equation*}
		\Phi(x)=
		\begin{cases}
			\varphi_{l}(x)(\varphi_{c}(x)+\varphi_{r}(x)), & \text { if } x \in \dom(\varphi_{l}) \cap \dom(\varphi_{c}) \cap \dom(\varphi_{r}), \\
			+\infty, &\text{else.}
		\end{cases}
	\end{equation*}
	Let $\bar{x} \in \dom(\varphi_{l}) \cap \dom(\varphi_{c}) \cap \dom(\varphi_{r})$,
	and suppose the following three conditions hold:
	\begin{enumerate}[\upshape(\romannumeral 1 )]
		\item The $\varphi_{l}$ is nonnegative and locally Lipschitz continuous around $\bar{x}$ relative to $\dom(\varphi_{l})$;
		\item The $\varphi_{c}$ is convex and $\partial \varphi_{c}(\bar{x}) \neq \emptyset$;
		\item The $\varphi_{r}$ is continuous at $\bar{x}$ relative to $\dom(\varphi_{r})$.
	\end{enumerate}
	Then, it holds that
	\begin{equation*}
		\widehat{\partial} \Phi(\bar{x}) \supseteq \widehat{\partial} \psi(\bar{x})+\varphi_{l}(\bar{x}) \partial \varphi_{c}(\bar{x}),
	\end{equation*}
	where $\psi: \mathbb{R}^{n} \to(-\infty,+\infty]$ is defined as:
	\begin{equation*}
		\psi(x)= 
		\begin{cases}
			(\varphi_{c}+\varphi_{r})(\bar{x}) \varphi_{l}(x)+\varphi_{l}(\bar{x}) \varphi_{r}(x), & \text { if } x \in \dom(\varphi_{l}) \cap \dom(\varphi_{r}), \\ 
			+\infty, & \text {else.}
		\end{cases}
	\end{equation*}
\end{proposition}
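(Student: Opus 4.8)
The plan is to prove the asserted inclusion elementwise: I fix arbitrary $u\in\widehat{\partial}\psi(\bar{x})$ and $v\in\partial\varphi_c(\bar{x})$, and show that $w:=u+\varphi_l(\bar{x})v$ belongs to $\widehat{\partial}\Phi(\bar{x})$ by verifying the defining inequality
\[
\Liminf_{\substack{z\to\bar{x}\\ z\neq\bar{x}}}\frac{\Phi(z)-\Phi(\bar{x})-\innerP{w}{z-\bar{x}}}{\|z-\bar{x}\|_2}\geq 0 .
\]
Since $\Phi\equiv+\infty$ off $\dom(\varphi_l)\cap\dom(\varphi_c)\cap\dom(\varphi_r)$, the quotient is $+\infty$ for $z$ outside this set, so it suffices to treat $z$ in the common domain tending to $\bar{x}$; for such $z$ I may freely use that $\varphi_l$ is Lipschitz relative to $\dom(\varphi_l)$ near $\bar{x}$ and that $\varphi_r(z)\to\varphi_r(\bar{x})$.

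The core step is an algebraic decomposition of the numerator. Expanding the product $\Phi=\varphi_l(\varphi_c+\varphi_r)$ and subtracting $\psi(z)-\psi(\bar{x})$, a direct computation gives the identity
\[
\Phi(z)-\Phi(\bar{x})-\bigl(\psi(z)-\psi(\bar{x})\bigr)
=\varphi_l(z)\bigl(\varphi_c(z)-\varphi_c(\bar{x})\bigr)+\bigl(\varphi_l(z)-\varphi_l(\bar{x})\bigr)\bigl(\varphi_r(z)-\varphi_r(\bar{x})\bigr).
\]
I then introduce the convex remainder $s(z):=\varphi_c(z)-\varphi_c(\bar{x})-\innerP{v}{z-\bar{x}}$, which satisfies $s(z)\geq 0$ by the subgradient inequality for $v\in\partial\varphi_c(\bar{x})$. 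Substituting $\varphi_c(z)-\varphi_c(\bar{x})=s(z)+\innerP{v}{z-\bar{x}}$ and regrouping, the numerator $\Phi(z)-\Phi(\bar{x})-\innerP{w}{z-\bar{x}}$ splits as $T_1(z)+T_2(z)+T_3(z)$, where
\[
T_1(z)=\psi(z)-\psi(\bar{x})-\innerP{u}{z-\bar{x}},\qquad T_2(z)=\varphi_l(z)\,s(z),
\]
and $T_3(z)=\innerP{v}{z-\bar{x}}\bigl(\varphi_l(z)-\varphi_l(\bar{x})\bigr)+\bigl(\varphi_l(z)-\varphi_l(\bar{x})\bigr)\bigl(\varphi_r(z)-\varphi_r(\bar{x})\bigr)$.

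Finally I would estimate the three pieces after dividing by $\|z-\bar{x}\|_2$. The term $T_1$ has $\liminf\geq 0$ directly from $u\in\widehat{\partial}\psi(\bar{x})$. The term $T_2$ is \emph{nonnegative}, since $\varphi_l\geq 0$ and $s(z)\geq 0$. For $T_3$, the local Lipschitz continuity of $\varphi_l$ gives $|\varphi_l(z)-\varphi_l(\bar{x})|\leq L\|z-\bar{x}\|_2$, so its first summand is $O(\|z-\bar{x}\|_2^2)$ and its second is bounded by $L\|z-\bar{x}\|_2\,|\varphi_r(z)-\varphi_r(\bar{x})|$; the continuity of $\varphi_r$ then makes both $o(\|z-\bar{x}\|_2)$, whence $T_3(z)/\|z-\bar{x}\|_2\to 0$. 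Combining these, $\liminf (T_1+T_2+T_3)/\|z-\bar{x}\|_2\geq 0$, which proves $w\in\widehat{\partial}\Phi(\bar{x})$ and, as $u,v$ are arbitrary, the claimed inclusion.

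The main obstacle is exactly the regrouping that produces $T_2=\varphi_l(z)\,s(z)$. If one instead kept the naive cross term $(\varphi_l(z)-\varphi_l(\bar{x}))(\varphi_c(z)-\varphi_c(\bar{x}))$, it would \emph{not} be $o(\|z-\bar{x}\|_2)$ in general, because $\varphi_c$ is only assumed convex with $\partial\varphi_c(\bar{x})\neq\emptyset$ and may thus fail to be continuous or locally Lipschitz at $\bar{x}$. Routing the convex increment through its subgradient and absorbing the cross term into $\varphi_l(z)\,s(z)$---which is legitimate precisely because $\varphi_l\geq 0$---converts this uncontrolled product into a manifestly nonnegative quantity, so that no regularity of $\varphi_c$ beyond the subgradient inequality is required.
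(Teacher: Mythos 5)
Your proof is correct and follows essentially the same route as the paper's: both arguments hinge on multiplying the subgradient inequality for $\varphi_c$ at $\bar{x}$ by the nonnegative factor $\varphi_l(z)$ (your term $T_2=\varphi_l(z)\,s(z)\ge 0$ is exactly the slack the paper discards in its lower bound) and on showing that the cross term $(\varphi_l(z)-\varphi_l(\bar{x}))(\varphi_r(z)-\varphi_r(\bar{x}))$ is $o(\|z-\bar{x}\|_2)$ via Lipschitz continuity of $\varphi_l$ and continuity of $\varphi_r$. Your elementwise formulation even makes explicit the replacement of $\varphi_l(z)v$ by $\varphi_l(\bar{x})v$ (the first summand of $T_3$), which the paper passes over silently.
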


\begin{proof}
	From the definition of the Fr{\'e}chet subdifferential of $\Phi$ at $\bar{x}$, we have
	\begin{equation}\label{eq: z1218eq1}
		\widehat{\partial} \Phi(\bar{x})=
		\left\{ v:
		\Liminf_{\substack{x \to \bar{x} \\ x \neq \bar{x} \\ x \in \dom(\Phi)}}
		\frac{\Phi(x)-\Phi(\bar{x})-\langle v, x-\bar{x}\rangle}{\|x-\bar{x}\|_2}
		\geq 0
		\right\}.
	\end{equation}
	Let $\bar{y} \in \partial \varphi_{c}(\bar{x})$. Due to $\varphi_{l} \geq 0$ around $\bar{x}$, it holds for all $x\in \dom(\Phi)$ that
	\begin{align}\label{eq: z1218eq3}
		&\Phi(x)-\Phi(\bar{x})-\langle v, x-\bar{x}\rangle\\
		&\geq
		\varphi_{l}(x)(\varphi_c(\bar{x}) + \langle \bar{y}, x-\bar{x} \rangle + \varphi_r(x))
		-(\varphi_{l}(\varphi_{c}+\varphi_{r}))(\bar{x}) 
		-\langle v, x-\bar{x}\rangle \notag \\
		&=
		\psi(x)-\psi(\bar{x})
		- \langle v-\varphi_{l}(x) \bar{y}, x-\bar{x} \rangle 
		+ r(x ), \notag
	\end{align}
	where $r(x ) = (\varphi_{l}(x)-\varphi_{l}(\bar{x}))(\varphi_{r}(x)-\varphi_{r}(\bar{x}))$. It follows that
	\begin{equation}\label{eq: 1218eq4}
		\lim_{\substack{x \to \bar{x} \\ x \neq \bar{x} \\ x \in \dom(\Phi)}}  \frac{|r(x)|}{\|x-\bar{x}\|_{2}}=0, 
	\end{equation}
	owing to the Lipschitz continuity of $\varphi_{l}$ around $\bar{x}$ relative to $\dom(\varphi_{c})$, and the continuity of $\varphi_{r}$ at $\bar{x}$ relative to $\dom(\varphi_{r})$. 
	Combining \eqref{eq: z1218eq1}, \eqref{eq: z1218eq3}, and \eqref{eq: 1218eq4}, we have
	\begin{align}\label{eq: z12241555}
		\widehat{\partial} \Phi(\bar{x})
		&\supseteq
		\left\{ v:
		\Liminf_{\substack{x \to \bar{x} \\ x \neq \bar{x} \\ x \in \dom(\varphi_{l}) \cap \dom(\varphi_{r})}}
		\frac{\psi(x)-\psi(\bar{x})	- \langle v-\varphi_{l}(x) \bar{y}, x-\bar{x} \rangle}{\|x-\bar{x}\|_2}
		\geq 0
		\right\}\\
		&=
		\widehat{\partial} \psi(\bar{x})+\varphi_{l}(\bar{x})  \bar{y},\notag
	\end{align}
	for each $\bar{y}\in\partial \varphi_{c}(\bar{x})$. Consequently, this proposition is derived from \eqref{eq: z12241555}.
	\hfill $\square$ \end{proof}

Now, we are ready to prove the relation \eqref{eq: z08072230}.
Recall that $(\bar{x}, \bar{y}, \bar{z}) \in(\Omega \cap \mathcal{C}) \times \dom(\partial g^{*}) \times \dom(h_{2}^{*})$, and $\bar{c}=1 / g(\bar{x})>0$ in \eqref{eq: z08072230}. According to \eqref{eq: def Q}, $Q$ around $(\bar{x}, \bar{y}, \bar{z}, \bar{c})$ can be represented as
\begin{equation}\label{eq: z1218eqA3}
	Q(x, y, z, c)= c^2 f_{\mathcal{C}}(x)\Big(g^{*}(y)+2/c- \langle x, y\rangle\Big)+h_{1}(x)+h_{2}^{*}(z)-\langle x, z\rangle,
\end{equation}
if $(x, y, z, c) \in \mathcal{C} \times \dom( g^{*}) \times \dom(h_2^{*}) \times (0,+\infty)$, and $Q(x, y, z, c)=+\infty$ otherwise. 
We define $q:\mathbb{R}^n\times\mathbb{R}^n\times\mathbb{R}^n\times\mathbb{R}\to (-\infty,+\infty]$ as	
\begin{equation}\label{eq: z12190000}
	q(x, y, z, c) := c^2 f_{\mathcal{C}}(x)\left(g^{*}(y)+2/c- \langle x, y\rangle\right)
\end{equation}
if $(x, y, c) \in \mathcal{C} \times \dom( g^{*})\times (0,+\infty)$, and $q(x, y, z, c):=+\infty$ otherwise. Then, we derive from \eqref{eq: z1218eqA3} that
\begin{equation}\label{eq: z1218eqA4}
	\widehat{\partial} Q(\bar{x}, \bar{y}, \bar{z}, \bar{c})=
	\widehat{\partial} q(\bar{x}, \bar{y}, \bar{z}, \bar{c}) +		
	\left[\begin{array}{c}
		\nabla h_{1}(\bar{x})-\bar{z} \\
		0_n \\
		\widehat{\partial} h_{2}^{*}(\bar{z})-\bar{x} \\
		0
	\end{array}\right],
\end{equation}
since  $h_{1}(x)-\langle x, z\rangle$ is differentiable and $h_{2}^*$ is separable from $q$.
With the help of Proposition \ref{Prop: Product Rule of F-subdiff}, it follows from \eqref{eq: z12190000} that
\begin{equation}\label{eq: z1218eq7}
	\widehat{\partial} q(\bar{x}, \bar{y}, \bar{z}, \bar{c})\supseteq
	\widehat{\partial} q_2(\bar{x}, \bar{y}, \bar{z}, \bar{c})+
	\left[\begin{array}{c}
		0_n \\
		\bar{c}^2 f_{\mathcal{C}}(\bar{x}) \partial g^*(\bar{y}) \\
		0_n \\
		0
	\end{array}\right],
\end{equation}
where $q_{2}$ is given by
\begin{equation}\label{eq: z1218eq5}
	q_{2}(x, y, z, c):=(g^{*}(\bar{y})+2 / \bar{c}-\langle\bar{x}, \bar{y}\rangle) c^{2} f_{\mathcal{C}}(x)
	+
	\bar{c}^{2} f_{\mathcal{C}}(\bar{x})\left(2/c-\langle x, y\rangle\right),
\end{equation}
if $(x, c) \in \mathcal{C} \times (0,+\infty)$, and $q_{2}(x, y, z, c):=+\infty$ otherwise. On the right-hand side of \eqref{eq: z1218eq5}, note that $g^{*}(\bar{y})+2 / \bar{c}-\langle\bar{x}, \bar{y}\rangle=g(\bar{x})>0$. Hence, by utilizing Proposition \ref{Prop: Product Rule of F-subdiff} again and noting that $2/c-\langle x, y\rangle$ is differentiable at $(\bar{x}, \bar{y}, \bar{c})$, \eqref{eq: z1218eq5} implies that
\begin{equation}\label{eq: z1218eq8}
	\widehat{\partial} q_2(\bar{x}, \bar{y}, \bar{z}, \bar{c})\supseteq
	\left[\begin{array}{c}
		(g^{*}(\bar{y})+2 / \bar{c}-\langle\bar{x}, \bar{y}\rangle) \bar{c}^{2} \widehat{\partial} f_{\mathcal{C}}(\bar{x}) \\
		0_n \\
		0_n \\
		2(g^{*}(\bar{y})+2 / \bar{c}-\langle\bar{x}, \bar{y}\rangle) f_{\mathcal{C}}(\bar{x}) \bar{c}
	\end{array}\right]
	+
	\bar{c}^{2} f_{\mathcal{C}}(\bar{x})
	\left[\begin{array}{c}
		-\bar{y} \\
		-\bar{x} \\
		0_n \\
		-2 / \bar{c}^{2}
	\end{array}\right].
\end{equation}
Combining \eqref{eq: z1218eqA4}, \eqref{eq: z1218eq7}, and \eqref{eq: z1218eq8}, we conclude that \eqref{eq: z08072230} holds.

\section{The initial point $x^0$ given by \eqref{eq: def x0 in exp} satisfies the condition \eqref{eq: requirement of x0}}\label{Appendix:x0}
Firstly, we shall show $ \|a_{i_{\star}}\|_{2}^{2}-\|T  a_{i_{\star}}\|_{2}^2 \neq 0$, ensuring  that the $ \hat{\theta}$ given by \eqref{eq: z11151646} is well-defined. 
Note that $(b-Tb)^{\top}A \neq 0$, since $A$ has full row-rank and $b\neq Tb$ follows from the fact that $b\notin\mathcal{S}_{\mu}$. Then, according to the definition of $i_{\star}$, we have $|(b-Tb)^{\top}a_{i_{\star}}|>0$, which along with the fact $b^{\top}(a_{i_{\star}} -Ta_{i_{\star}})  = (b-Tb)^{\top}a_{i_{\star}}$ implies that $a_{i_{\star}} -Ta_{i_{\star}}\neq 0$. Hence, we deduce that $\|a_{i_{\star}}\|^2_2 -\|Ta_{i_{\star}}\|^2_2 > 0$ and $\hat{\theta}\neq 0$. Secondly, due to $\underline{x}<0<\overline{x}$ and $\hat{\theta}\neq 0$, the $x^0$ given by \eqref{eq: def x0 in exp} is feasible for problems \eqref{problem: L1dL2} and \eqref{problem: L1dSK}, that is, $x^0\neq 0$ and $\underline{x}\leq x^0 \leq \overline{x}$ hold.
Thirdly, owing to $0 \in[\underline{x}_{i_{\star}}, \overline{x}_{i_{\star}}]$ and $x_{i_{\star}}^{0} \neq 0$, we deduce from \eqref{eq: def x0 in exp} that $(x_{i_{\star}}^{0}-\hat{\theta})^{2}<\hat{\theta}^{2}$, or equally, $(x_{i_{\star}}^{0}-2 \hat{\theta}) x_{i_{\star}}^{0}<0$. Note that
$
\|\mathcal{T}_{\mu}(Ax-b)\|^2_2 = \max\left\{ \|E(Ax-b)\|^2_2: E\in\{0,1 \}^{m\times m} \text{ is diagonal,}~\|E\|_0\leq \mu \right\},
$
where $\|\cdot\|_{0}$ counting the number of nonzero elements in the matrix. Then, 
we have
\begin{equation*}
	\begin{aligned}
		&F(x^{0}) \leq  \frac{\|x^{0}\|_{1}}{\|x^{0}\|_{2}}+\frac{\lambda}{2}\|A x^{0}-b\|_{2}^{2}-\frac{\lambda}{2}\|T  (A x^{0}-b)\|_{2}^{2} \\
		&=  1+\frac{\lambda}{2}\left(\|a_{i_{\star}}\|_{2}^{2}-\|T  a_{i_{\star}}\|_{2}^{2}\right)(x_{i_{\star}}^{0}-2 \hat{\theta}) x_{i *}^{0}+\frac{\lambda}{2}\left(\|b\|_{2}^{2}-\|T  b\|_{2}^{2}\right) \\
		&\overset{(*)}{<}  1+\frac{\lambda}{2}(\|b\|_{2}^{2}-\|T  b\|_{2}^{2})=1+\frac{\lambda}{2}\dist^2(b, \mathcal{S}_{\mu}),
	\end{aligned}
\end{equation*}
where $(*)$ follows from the positivity of $\|a_{i_{\star}}\|_{2}^{2}-\|T  a_{i_{\star}}\|_{2}^{2}$ and the negativity of $(x_{i_{\star}}^{0}-2 \hat{\theta}) x_{i_{\star}}^{0}$. This means that $x^0$, defined in \eqref{eq: def x0 in exp}, satisfies the condition \eqref{eq: requirement of x0}.

\quad \\

\noindent\textbf{Funding}~~
	The work of Na Zhang was supported in part by the National Natural Science Foundation of China under grant 12271181, by the Guangzhou Basic Research Program
	under grant 2025A04J5240 and by the Basic and Applied Basic Research Foundation of Guangdong Province under grant 2023A1515030046. \\
	The work of Qia Li was supported in part by the National Natural Science Foundation of China under grant 12471098 and the Guangdong Province Key Laboratory of Computational Science at the Sun Yat-sen University (2020B1212060032).\\

\noindent \textbf{Data Availability}~~ The datasets generated or analyzed during the current study are available from the corresponding author on reasonable request.

\section*{Declarations}

\textbf{Conflict of Interest}~~ The authors declare that they have no conflict of interest.

\bibliographystyle{spmpsci}      
\bibliography{ZhNaPhD}   

\end{document}